\newenvironment{customthm}[1]
  {\innercustomthm}
  {\endinnercustomthm}
\tikzstyle{bsq}=[rectangle, draw, thick, minimum width=0.7cm, minimum height=0.7cm]
\tikzstyle{bver}=[rectangle, draw, thick, minimum width=1cm, minimum height=2cm]
\tikzstyle{bhor}=[rectangle, draw, thick, minimum width=2cm, minimum height=1cm]
\newcommand{\dashedrightarrow}[1][2pt]{%
  \settowidth{\@tempdima}{$\rightarrow$}\rightarrow
  \makebox[-\@tempdima]{\hskip-1.5ex\color{white}\rule[0.5ex]{#1}{1pt}}
  \phantom{\rightarrow}
}
\DeclareMathAlphabet{\mathpzc}{OT1}{pzc}{m}{it}
\address{Department of Mathematics, University of Kentucky, 719 Patterson Office Tower, Lexington, KY 40506 USA.}
\email{dave.jensen@uky.edu}
\address{Department of Pure Mathematics and Mathematical Statistics, Wilberforce Road, University of Cambridge, Cambridge CB2 1TP UK.}
\email{dr508@cam.ac.uk}
\newtheorem{theorem}{Theorem}[section]
\newtheorem{corollary}[theorem]{Corollary}
\newtheorem{lemma}[theorem]{Lemma}
\newtheorem{proposition}[theorem]{Proposition}
\newtheorem{definition}[theorem]{Definition}
\newtheorem{assumption}[theorem]{Assumption}
\newtheorem{quasi-theorem}[theorem]{Quasi-Theorem}
\newtheorem{blank remark}[theorem]{}
\newcommand{\trop}{\operatorname{trop}}
\newcommand{\ddiv}{\operatorname{div}}
\newcommand{\Pic}{\operatorname{Pic}}
\newtheorem{rem1}[theorem]{Remark}
\newenvironment{remark}{\begin{rem1}\em}{\end{rem1}}
\newtheorem{not1}[theorem]{Notation}
\newenvironment{notation}{\begin{not1}\em}{\end{not1}}
\newcommand{\CC} {{\mathbb C}}
\newcommand{\NN} {{\mathbb N}}		
\newcommand{\PP}{\mathbb{P}}
\newcommand{\RR} {{\mathbb R}}		
\newcommand{\ZZ} {{\mathbb Z}}
\newcommand{\Hom}{\operatorname{Hom}}
\DeclareMathOperator{\spec}{Spec}
\newcommand{\cal}{\mathcal}
\def\cD{{\cal D}}
\def\cM{{\cal M}}
\def\cO{{\cal O}}
\def\cW{{\cal W}}
\newcommand{\Mbar}{\overline{\cM}}
\def\trop{\mathrm{trop}}
\def\an{\mathrm{an}}
\def\blfootnote{\xdef\@thefnmark{}\@footnotetext}
\title{Brill-Noether theory for curves of a fixed gonality}
\author[David Jensen {\it \&} Dhruv Ranganathan]{\vspace{-0.0in}{{\larger D}{\smaller avid}\ \ {\larger J}{\smaller ensen} {\it \&} {\larger D}{\smaller hruv}\ \ {\larger R}{\smaller anganathan}}}
\date{}
\begin{document}

\begin{abstract}
We prove a generalization of the Brill--Noether theorem for the variety of special divisors $W^r_d(C)$ on a general curve $C$ of prescribed gonality. Our main theorem gives a closed formula for the dimension of $W^r_d(C)$. We build on previous work of Pflueger, who used an analysis of the tropical divisor theory of special chains of cycles to give upper bounds on the dimensions of Brill--Noether varieties on such curves. We prove his conjecture, that this upper bound is achieved for a general curve. Our methods introduce logarithmic stable maps as a systematic tool in Brill--Noether theory. A precise relation between the divisor theory on chains of cycles and the corresponding tropical maps theory is exploited to prove new regeneration theorems for linear series with negative Brill--Noether number. The strategy involves blending an analysis of obstruction theories for logarithmic stable maps with the geometry of Berkovich curves. To show the utility of these methods, we provide a short new derivation of lifting for special divisors on a chain of cycles with generic edge lengths, proved using different techniques by Cartwright, Jensen, and Payne. A crucial technical result is a new realizability theorem for tropical stable maps in obstructed geometries, generalizing a well-known theorem of Speyer on genus $1$ curves to arbitrary genus.
\end{abstract}

\maketitle

\vspace{-0.2in}

\section{Introduction}
\label{Sec:Intro}

Given a smooth projective curve $C$ over the complex numbers, let $W^r_d (C)$ denote the subvariety of the Picard variety of $C$, parameterizing divisors of degree $d$ that move in a linear system of dimension at least $r$.  The dimensions of these varieties are fundamental invariants of $C$. For specific curves, the dimension of $W^r_d(C)$ can be far from that predicted by a naive dimension count.  However, the Brill--Noether theorem asserts that, if $C$ is a general curve of genus $g$, this naive dimension count is correct. More precisely, the dimension of $W^r_d (C)$ is
\[
\rho (g,r,d) := g-(r+1)(g-d+r),
\]
where a scheme is understood to be empty when its dimension is negative. This result was first proved in a seminal paper by Griffiths and Harris~\cite{GH80}. It has inspired a great deal of additional research, including multiple re-proofs by an array of different techniques~\cite{tropicalBN,EH86,Laz86}.

The main result of this paper is an analogue of this theorem for curves of a prescribed gonality.  Fix an integer $k \geq 2$, and let $C$ be a general curve of genus $g$ and gonality $k$.  In \cite{Pfl16b}, Pflueger defines the invariant
\[
{\rho}_k (g,r,d) : = \max_{\ell \in \{ 0, \ldots , r' \}} \rho (g,r-\ell,d) - \ell k ,
\]
where $r' = \min \{ r,g-d+r-1 \}$.
He shows that $\dim W^r_d (C) \leq {\rho}_k (g,r,d)$, and conjectures that equality holds.  In this paper, we prove Pflueger's conjecture. 

\begin{customthm}{A}
\label{thm:mainthm}
Let $C$ be a general curve of genus $g$ and gonality $k \geq 2$ over the complex numbers. Assume that the quantity $g-d+r$ is positive.  Then
\[
\dim W^r_d (C) = {\rho}_k (g,r,d) .
\]
In particular, $W^r_d (C)$ is non-empty precisely when ${\rho}_k (g,r,d)$ is non-negative.
\end{customthm}

When $C$ is general in $\cM_g$, the scheme $W^r_d(C)$ is equidimensional, but for a general $k$-gonal curve, it can have irreducible components of different dimensions.  For example, if $C$ is a general trigonal curve of genus 6, then $W^1_4 (C)$ has a 1-dimensional component whose general member is the $g^1_3$ plus a base point.  At the same time, however, such a curve can be embedded in $\PP^1 \times \PP^1$ as curve of bidegree $(3,4)$, and projection onto the second $\PP^1$ yields a $g^1_4$ that is base point free.  It is a straightforward exercise to see that this $g^1_4$ is an isolated point of $W^1_4 (C)$.  Theorem~\ref{thm:mainthm} should be interpreted as saying that $W^r_d (C)$ has a component of maximal possible dimension ${\rho}_k (g,r,d)$.

Our results rely on a newly observed connection between the divisor theory on tropical curves and the theory of logarithmic stable maps. A crucial technical result, which we believe to be of independent interest, is the following.

\begin{customthm}{B}\label{thm: lifting}
Let $\Gamma$ be a trivalent marked tropical curve with all vertex weights equal to $0$. Assume that the image of $[\Gamma]$ in $\cM_g^{\trop}$ is a chain of $g$ cycles. Let
\[
\Psi: \Gamma\to \RR^r
\]
be a balanced tropical map. Assume that each cycle of $\Gamma$ spans at least a hyperplane in $\RR^r$ and that any pair of consecutive cycles spans $\RR^r$. If $\Psi$ is naively well-spaced, then there exists a smooth curve $C$ over a non-archimedean extension of $\CC$ and a map $F: C \dashrightarrow   \mathbb G_m^r$ such that $F^\trop = \Psi$.
\end{customthm}

\noindent
\textit{Naive well-spacedness} is an explicit collection of piecewise linear conditions on the edge lengths of $\Gamma$ and of edges in the trees attached to the cycles. There is one such condition for each cycle of $\Gamma$ that fails to span $\RR^r$. See Definition~\ref{def: naive-well-spacedness}.

\subsection{Context and motivation}  Theorem~\ref{thm:mainthm} reduces to a number of known results in special cases.  The maximum possible gonality of a curve of genus $g$ is $k = \lfloor \frac{g+3}{2} \rfloor$. This is the gonality of a general curve of genus $g$ and in this case Theorem~\ref{thm:mainthm} recovers the Brill-Noether theorem.  At the other extreme, the minimum possible gonality of a positive genus curve is $k=2$, in which case the curve is hyperelliptic.  Here, Theorem~\ref{thm:mainthm} shows that $\dim W^r_d (C) = d-2r$.  This follows from the fact that every $g^r_d$ on a hyperelliptic curve can be obtained by adding $d-2r$ base points to $r\cdot g^1_2$. This dates back to at least the $19^{\textnormal{th}}$ century, and was likely known to Clifford~\cite{Cliff78}.

For intermediate values of $k$, progress has been much more recent.  Much like in the hyperelliptic case, when $d \geq kr$, one can always construct a $g^r_d$ by adding $d-kr$ base points to $r\cdot g^1_k$.  Applying Riemann-Roch, this shows that
\begin{eqnarray*}
\dim W^r_d (C) &\geq& \max \{ d-kr, (2g-2-d)-k(g-d+r-1) \} \\
&=& \rho (g,r-r',d) - r'k ,
\end{eqnarray*}
where, as above, $r' = \min \{ r,g-d+r-1 \}$.
In~\cite{Ballico96}, Ballico and Keem consider the case where $g \leq 4k-4$.  In this case, they show that $\dim W^r_d(C)$ can exceed $\rho (g,r,d)$ by at most $g-2k+2$, using a blend of admissible cover techniques and limit linear series. In \cite{CM99}, Coppens and Martens exhibit components of $W^r_d (C)$ of dimension $\rho (g,r-\ell,d) - \ell k$ for $\ell$ equal to 0, 1, and $r'$.  In \cite{CM02}, they expand this to the case where $r'+1-\ell$ divides either $r'$ or $r'+1$ and is smaller thank $k$.  Together with Pflueger's upper bound, these results determine the dimension of $W^r_d (C)$ for general trigonal or tetragonal curves.

In a related but orthogonal direction, Farkas considers the Brill--Noether locus in the moduli space of $k$-gonal curves~\cite{Farkas}. When $\rho (g,r,d)$ is negative, he asks whether this locus has the expected codimension $-\rho (g,r,d)$.  He shows that this is the case when $k \geq 2 + r(2-\rho(g,r,d))$.  Farkas's approach uses the geometry of the Kontsevich moduli space of stable maps in an essential fashion.  To our knowledge, this is the only result on the codimension of the Brill-Noether locus in the moduli space of $k$-gonal curves.  In $\mathcal{M}_g$, more is known, and in particular there are many results when $-\rho (g,r,d)$ is relatively small \cite{EH86, Edidin93, PfluegerThesis}.  When $r=1$, the Brill-Noether locus is the image of the Hurwitz space, and is therefore irreducible of the expected dimension.  However, counterexamples appear in rank $2$. For example, by Clifford's Theorem a curve has a $g^2_4$ if and only if it is hyperelliptic, so the codimension of the space of curves with a $g^2_4$ is $g-2$. On the other hand, the expected codimension is $- \rho (g,2,4) = 2g-6$.

Theorem~\ref{thm:mainthm} provides many cases where the Brill-Noether number is negative but the general curve of gonality $k$ nevertheless possesses a $g^r_d$.  In these cases, the Brill-Noether locus necessarily fails to have the expected codimension.  The invariants for which $W^r_d(C)$ is non-empty for a general curve of gonality $k$ are precisely determined by Theorem~\ref{thm:mainthm}. Following \cite{Pfl16b}, we represent this graphically by identifying a choice of $d$ and $r$ with the point $(r+1,g-d+r)$ in the plane, and identifying the region where $W^r_d (C)$ is nonempty with the area bounded by a curve in the first quadrant. We refer to this region as the \textit{BN region}. Figure~\ref{Fig:Empty} depicts the situation for different values of $k$, ranging from most general to most special.  By the Brill--Noether theorem, when $k = \lfloor \frac{g+3}{2} \rfloor$, the BN region lies below the hyperbola $xy = g$.  At the other extreme, when $k=2$ the BN region lies below the line $x+y = g+1$.  For more general values of $k$, we see by Theorem~\ref{thm:mainthm} that the BN region region lies below the graph of the piecewise quadratic equation
\[
g = \min_{\ell \in \{ 0, \ldots , r' \}} (x-\ell)(y-\ell) + \ell k .
\]
The expression $(x-\ell)(y-\ell) + \ell k$ obtains its minimum at $\ell = \frac{1}{2} (x+y-k)$.  When this number is nonnegative and smaller than $r'$, we see that the formula above is quadratic in $x$ and $y$.  This corresponds to the curved region in the center of Figure~\ref{Fig:Empty}.  Otherwise, the minimum in the expression above is obtained at $\ell = 0$ or $\ell = r'$, corresponding to the two linear regions at either end of Figure~\ref{Fig:Empty}.

\begin{figure}[h!]
\begin{tikzpicture}

\foreach \r in {0.5,1,...,8}
	\foreach \s in {0.5,1,...,8}
	\draw [ball color=black] (\r,\s) circle (0.125mm);

\draw[->] (0,0)--(0,8) node[above] {$g-d+r$};
\draw[->] (0,0)--(8,0) node[right] {$r+1$};
\draw[color=black, thick, smooth, domain=0.125:4] plot ({\x},{1/(\x)});
\draw[color=black, smooth, thick,domain=4:8] plot ({\x},{1/(\x)});
\draw[thick,domain=0.125:8] plot ({\x},{8.125-\x});
\draw[thick,domain=1:3] plot ({\x-0.14},{\x+1.8575-2*sqrt(2*\x-2)});
\draw[thick,] (0.125,8)--(0.862,2.85);
\draw[thick,color=black] (2.8,0.86275)--(8,0.125);
\draw (1,0.25) node {\footnotesize $k=\lfloor \frac{g+3}{2} \rfloor$};
\draw (4.5,4.5) node {\footnotesize $k=2$};
\draw (2.75,1.75) node {\footnotesize $2 < k < \lfloor \frac{g+3}{2} \rfloor$};
\draw [ball color=black] (0,0) circle (0.5mm);

\end{tikzpicture}
\caption{This graph shows the points $(r+1,g-d+r)$ for which $W^r_d (C)$ is nonempty, when $C$ is a general curve of gonality $k$, for various values of $k$.}
\label{Fig:Empty}
\end{figure}


\subsection{Approach and techniques} The upper bound
\[
\dim W^r_d (C) \leq {\rho}_k (g,r,d)
\]
established by Pflueger is based on tropical techniques, which in turn rely on results in Berkovich geometry. He analyzes the divisor theory of a particular metric graph $\Gamma$ of gonality $k$. This graph is a chain of cycles with special edge lengths, chosen in such a way that $\Gamma$ supports a degree $k$ divisor of rank $1$. It is a generalization of the generic chain of cycles used in~\cite{tropicalBN} to establish a new proof of the Brill--Noether theorem. In addition to this combinatorial analysis, Pflueger uses the lifting results of~\cite{ABBR} to show that there is a curve $C$ of gonality $k$ that specializes to $\Gamma$. He first uses the gonality pencil to construct a map $\Gamma\to \RR$ and then lifts this map to a family of algebraic covers of $\PP^1$. The lifting result for such maps can be seen via the logarithmic unobstructedness of the space of admissible covers of $\PP^1$. The upper bound follows from Baker's specialization lemma~\cite{Bak08}.

To complete Pflueger's argument, we must show that, in addition to the gonality pencil, certain additional divisors on $\Gamma$ of higher rank lift to $C$ without dropping rank.  While there is a fairly good understanding of lifting divisors of rank $1$ on tropical curves~\cite{ABBR,ABBR2,LM14}, there are only a small number of lifting results in higher rank. Indeed, Cartwright has shown that the lifting problem for rank $2$ divisors on tropical curves satisfies a version of Mn\"ev universality~\cite{Cart15}. See~\cite[Section 10]{BJ16} for a detailed discussion of the lifting problem in the tropical setting.

One crucial family of graphs for which a complete higher rank lifting result for divisors is known is the \textit{generic} chain of cycles~\cite{CJP}. The argument used to establish this result in~\cite{CJP} relies heavily on the known structure of $W^r_d (C)$ for general curves $C$, including its determinantal description, its dimension, and its class as a cycle in the Jacobian.  These techniques do not appear to generalize to special chains of cycles. Instead, we appeal to logarithmic geometry to prove the necessary lifting result. Along the way, we provide a short new derivation of the analogous lifting result for the generic chain of cycles, see Theorem~\ref{Thm:GenericLift}. We use this to give a purely tropical proof of the existence part of the Brill--Noether theorem~\cite{Kempf,KleimanLaksov} that does not rely on intersection theory; see Theorem~\ref{thm: Kempf-Kleiman-Laksov}.

When $\Gamma$ has generic edge lengths, our lifting argument proceeds as follows. Given a special divisor $D$ on $\Gamma$, distinguished piecewise linear functions in the associated tropical linear series can be used to construct a tropical stable map from $\Gamma$ to $\PP^r_{\trop}$. The slice by any coordinate hyperplane is equivalent to $D$ on $\Gamma$.  We then use deformation theory of logarithmic stable maps to lift the tropical map, recovering the divisor as the tropicalization of a hyperplane section. A key observation in this argument is that the genericity of $\Gamma$ guarantees that the map to $\PP^r_{\trop}$ is \textit{non-superabundant} in the sense of~\cite{CFPU,KatLift,Mi03}.  That is, the image of each cycle of $\Gamma$ spans the target $\RR^r$. This combinatorial condition guarantees the smoothness of a versal deformation space naturally associated to this tropical stable map, so there is no obstruction to lifting.

Given a divisor on a $k$-gonal chain of cycles $\Gamma$ that has rank $r$, one may still construct a map $\Gamma\to \PP^r_{\trop}$. However, the image of each cycle spans a linear space of dimension at most $k-1$, and many consecutive cycles may lie in the same linear subspace. Such tropical curves are called \textit{superabundant}. The algebraic deformation space encoded by this tropical curve is highly singular. The general tropical realization problem for tropical stable maps also satisfies a version of Mn\"ev universality, due to Vakil's Law, see~\cite{Vak06} and~\cite[Theorem D {\it \&} Remark 3.1.1]{R16a}. The relevant tropical realization problem for maps $\Gamma\to\PP^r_{\trop}$ seems beyond the reach of present tropical realizability theorems.

Rather than considering maps to projective space, in the $k$-gonal setting it appears more natural to consider relative maps to projective space over the base $\PP^1$.  In other words, we consider maps from the curve to projective bundles over $\PP^1$ such that the composition with the map to $\PP^1$ yields the $k$-fold cover.  For simplicity, we refer to such projective bundles throughout as \textit{scrolls}.  Our work strongly suggests that, when $C$ is a general $k$-gonal curve, the various components of $W^r_d (C)$ correspond to different scrolls.  In \cite[Question~1.10]{Pfl16b}, Pflueger asks whether every component of $W^r_d (C)$ has dimension $\rho (g,r-\ell,d)-\ell k$ for some $\ell$.  A consequence of Proposition~\ref{Prop:Remainder} and the results of Section~\ref{Sec:SpecialLift} is that, for each value of $\ell$ in the range $r'+1-k < \ell \leq r'+1$, there exists a scroll $\mathbb{S}(a,b)$ such that the corresponding space of nondegenerate maps from $C$ to $\mathbb{S}(a,b)$ has dimension at least $\rho (g,r-\ell,d) - \ell k$.  Indeed, this conjectural description has been confirmed in the years since this paper first appeared.  The Brill-Noether variety $W^r_d (C)$ admits a stratification by isomorphism classes of scrolls, and each stratum has a certain expected dimension \cite{HLarson,CPJ}.

Studying maps to scrolls allows us to replace the tropical realizability problem above with a simpler one.  Rather than constructing a map from $\Gamma$ to a tropical projective space, we construct a map to the tropicalization of a scroll. The corresponding tropical curve is still superabundant, but has fewer obstructions. A crucial technical achievement of this paper is the solution to the relevant tropical realizability problem (Theorem~\ref{thm: lifting}). The necessary lifting theorem for divisors is obtained as a corollary. It should be noted that the chain of cycles is among a small number of tropical curves whose Brill--Noether theory can be explicitly understood and it exhibits a number of unexpected properties. One among these properties, observed and exploited in this text, is a close relationship between the theory of divisors and that of maps to projective space, which typically breaks down in the tropical setting.

The realizability problem for tropical stable maps remains largely unsolved, despite substantial interest~\cite{ABBR,BPR16,CFPU,KatLift,Mi03,Ni15,NS06,R16a,R15a,RSW19,Sp07}. Two important cases where progress has been made are Speyer's genus $1$ results~\cite{RSW19,Sp07} and the results of  Cheung, Fantini, Park, and Ulirsch for non-superabundant curves~\cite{CFPU}. The latter builds on insights from Nishinou and Siebert's pioneering genus $0$ analysis~\cite{NS06}. To the best of our knowledge, Theorem~\ref{thm: lifting} provides the first solution to a superabundant tropical realizability problem that applies in all genera and all target dimensions, to a maximally degenerate combinatorial type. Our approach blends the techniques of the aforementioned authors. We use the geometry of logarithmic stable maps and lifting theorems for tropical intersections to reduce the question to one about maps to $\PP^1$. We then use semistable vertex decompositions for morphisms of curves~\cite{ABBR,BPR16} to further reduce to a local lifting question in genus $1$. This allows for Speyer's results to be applied ``cycle-by-cycle''. We conclude by applying the tropicalization of moduli spaces of stable maps in superabundant geometries developed in~\cite{R16a}. For clarity, we focus on the case of chains of cycles in this text, but the procedural aspects of our proof should generalize to new combinatorial geometries and give high genus realizability theorems. On the other hand, we expect that improved lifting theorems in the chain of cycles geometry will have additional applications in the theory of linear series.

We conclude the introduction by noting that the only substantial difficulty in extending these results to positive characteristic lies in generalizing Theorem~\ref{thm: lifting}. In particular, we rely on Speyer's genus $1$ lifting result~\cite[Theorem 3.4]{Sp07}, which requires a characteristic $0$ hypothesis.

\subsection*{Acknowledgments}  We are indebted to Sam Payne, who collaborated actively with us during the early stages of this project, and to Nathan Pflueger for discussions on his work. We thank Dan Abramovich and Davesh Maulik for patiently fielding our questions about obstruction theories, and Dori Bejleri for many helpful conversations. Dan Abramovich, Sam Payne, and an anonymous referee provided valuable feedback on drafts of the manuscript. Work on this project was begun when the authors were at Yale University together in Spring 2016 and the Fields Institute in December 2016. The work was completed when the second author was a member of the mathematics department at MIT. We thank these institutions for a warm and friendly atmosphere. The first author is supported by NSF DMS-1601896.

\setcounter{tocdepth}{1}
\tableofcontents

\section{Combinatorics of special chains of cycles}
\label{Sec:TheGraph}

In this section, we review the divisor theory of special chains of cycles, as discussed in~\cite{Pfl16b,Pfl16a}. The material of this section is developed in detail in those papers and their precursors \cite{tropicalBN, JP14}.  For more on the divisor theory of metric graphs, we refer the reader to \cite{Bak08, BJ16}.

\subsection{Torsion profiles and displacement tableaux} Let $\Gamma$ be a chain of cycles with bridges, as pictured in Figure \ref{Fig:TheGraph}.  Note that $\Gamma$ has $g$ cycles, labeled $\gamma_1 , \ldots , \gamma_g$, and $2g$ vertices, one on the lefthand side of each cycle, which we label $v_1, \ldots , v_g$, and one on the righthand side of each cycle, which we label $w_1, \ldots, w_g$.  There are two edges connecting the vertices $v_j$ and $w_j$, the top and bottom edges of the $j^{\textnormal{th}}$ cycle $\gamma_j$, whose lengths are denoted $\ell_j$ and $m_j$, respectively.
\begin{figure}[h!]
\begin{tikzpicture}

\draw (0,0) circle (1);
\draw (-1.25,0) node {\footnotesize $v_1$};
\draw (0.75,0) node {\footnotesize $w_1$};
\draw (1,0)--(2,0);
\draw (3,0) circle (1);
\draw (2.25,0) node {\footnotesize $v_2$};
\draw (4,0)--(5,0);
\draw (6,0) circle (1);
\draw (7,0)--(8,0);
\draw (9,0) circle (1);
\draw (9.6,0) node {\footnotesize $w_{g-1}$};
\draw (10,0)--(11,0);
\draw (12,0) circle (1);
\draw (11.25,0) node {\footnotesize $v_g$};
\draw (13.3,0) node {\footnotesize $w_g$};

\draw [<->] (7.15,0.25)--(7.85,0.25);
\draw [<->] (7.15,0.5) arc[radius = 1.15, start angle=10, end angle=170];
\draw [<->] (7.15,-0.5) arc[radius = 1.15, start angle=-9, end angle=-173];

\draw (7.5,0.4) node {\footnotesize$n_j$};
\draw (6,1.75) node {\footnotesize$\ell_j$};
\draw (6,-1.75) node {\footnotesize$m_j$};

\draw [ball color=black] (-1,0) circle (0.5mm);
\draw [ball color=black] (1,0) circle (0.5mm);

\draw [ball color=black] (2,0) circle (0.5mm);
\draw [ball color=black] (4,0) circle (0.5mm);

\draw [ball color=black] (5,0) circle (0.5mm);
\draw [ball color=black] (7,0) circle (0.5mm);

\draw [ball color=black] (8,0) circle (0.5mm);
\draw [ball color=black] (10,0) circle (0.5mm);

\draw [ball color=black] (11,0) circle (0.5mm);
\draw [ball color=black] (13,0) circle (0.5mm);
\end{tikzpicture}
\caption{The chain of cycles $\Gamma$.}
\label{Fig:TheGraph}
\end{figure}
For $1 \leq j \leq g-1$ there is a bridge connecting $w_j$ and $v_{j+1}$, which we refer to as the $j^{\textnormal{th}}$ bridge $\beta_j$, of length $n_j$.  The failure of a chain of cycles to be Brill--Noether general is measured by the \textit{torsion orders} of the cycles~\cite{Pfl16a}.

\begin{definition}\label{Def:TorsionOrder}
If $\ell_j + m_j$ is an irrational multiple of $m_j$, then the $j^{\textnormal{th}}$ \textbf{torsion order} $\mu_j$ is 0.  Otherwise, $\mu_j$ is the minimum positive integer such that $\mu_j m_j$ is an integer multiple of $\ell_j + m_j$.  The sequence $\vec{\mu} = (\mu_1 , \mu_2 , \ldots , \mu_g )$ is called the \textbf{torsion profile} of the chain of cycles.
\end{definition}

The special divisor classes on $\Gamma$, i.e. the classes of degree $d$ and rank greater than $d-g$, are classified in \cite{Pfl16a}.  It is shown that the Brill--Noether locus $W^r_d (\Gamma)$ parametrizing divisor classes of degree $d$ and rank $r$ is a union of tori, indexed by certain types of tableaux, called \textit{$\vec{\mu}$-displacement tableaux}. These are generalizations of standard Young tableaux on the alphabet $\{1,\ldots, g\}$. The generalization allows the same letter to appear in multiple boxes in the tableaux, in a precise manner dictated by $\vec\mu$.

\begin{definition}
A tableau is a \textbf{$\vec{\mu}$-displacement tableau} if, for any two boxes containing the same symbol $j$, the lattice distance between them is a multiple of $\mu_j$.
\end{definition}

For consistency with \cite{CJP,tropicalBN, JP14,JP16A,JP16}, we write our tableaux in the English style, beginning in the upper left and proceeding down and to the right\footnote{In \cite{Pfl16b, Pfl16a}, Pflueger uses the French notation for tableaux, beginning in the bottom left.  Similarly, in \cite{tropicalBN} and subsequent papers, the tableau corresponding to a divisor of rank $r$ and degree $d$ has $r+1$ columns and $g-d+r$ rows, but in \cite{Pfl16b, Pfl16a} this convention is reversed.}.  The top row and leftmost column of a tableau are the $0^{\textnormal{th}}$ row and column, respectively.  We write $t(x,y)$ for the value appearing in column $x$ and row $y$ of the tableau $t$.

\subsection{Coordinates on $\Gamma$ and its Picard group} A chain of cycles with torsion profile $\vec \mu$ has a natural system of coordinates, obtained from the coordinates on each cycle~\cite{Pfl16b}. On the cycle $\gamma_j$, let $\langle \xi \rangle_j$ denote the point located $\xi m_j$ units from $w_j$ in the counterclockwise direction.  Note that the points $v_j$ and $w_j$ are equal to $\langle -1 \rangle_j$ and $\langle 0 \rangle_j$, respectively, and $\langle a_1 \rangle_j = \langle a_2 \rangle_j$ if and only if $a_1 \equiv a_2 \pmod{\mu_j}$.

The Jacobian of $\Gamma$ can be identified with the product of the cycles and has a system of coordinates induced from $\Gamma$, as do the higher degree Picard groups. In particular, any divisor of degree $d$ on $\Gamma$ is equivalent to a \textit{unique} divisor of the form
\[
(d-g)w_g + \sum_{j=1}^g \langle \xi_j \rangle_j
\]
for real numbers $\xi_j$.  By uniqueness, we may think of $\xi_j$ as a function on the Picard group $\mathrm{Pic}(\Gamma)$.  This function is not linear on divisor classes.  However, as noted in \cite[Remark~3.5]{Pfl16b}, the function
\[
\tilde{\xi_j} = \xi_j - (j-1)
\]
is linear.  This is an immediate consequence of the tropical Abel-Jacobi Theorem~\cite{AbelJacobi}.  In particular, $\tilde{\xi_j}$ is obtained by integrating against a harmonic 1-form supported on the cycle $\gamma_j$.  We will make frequent use of both $\xi_j$ and $\tilde{\xi}_j$ throughout the text.

For a $\vec{\mu}$-displacement tableau $t$, we define a subtorus of the $d^{\mathrm{th}}$ Picard group of $\Gamma$ as
\[
\mathbf{T}(t) = \left\{ D = (d-g)w_g + \sum_{j=1}^g \langle \xi_j \rangle_j \ \big{\vert} \ \xi_j \in \mathbb{R}, \xi_{t(x,y)} \equiv y-x \pmod{\mu_j} \right\} .
\]
If $j$ does not appear in the tableau $t$, then there is no restriction placed on $\xi_j$.  It follows that the dimension of $\mathbf{T}(t)$ is equal to the number of elements of the set $\{ 1, \ldots , g \}$ that do not appear in the tableau $t$.  Moreover, if the number $j$ appears in multiple boxes of the tableau $t$, then the value $\xi_j$ is nevertheless well-defined modulo $\mu_j$, by the definition of a $\vec{\mu}$-displacement tableau.  The main result of \cite{Pfl16b} is that
\[
W^r_d (\Gamma) = \bigcup \mathbf{T}(t),
\]
where the union is over all rectangular $\vec{\mu}$-displacement tableaux with $r+1$ columns and $g-d+r$ rows. This description assumes $g-d+r is positive$, since otherwise $W^r_d(\Gamma)$ coincides with $\mathrm{Pic}^d(\Gamma)$.

\subsection{Lingering lattice paths} A useful combinatorial tool to describe reduced divisors on the chain of cycles is the \textit{lingering lattice path}, used heavily throughout~\cite{tropicalBN}. Associated to a tableau $t$ is a sequence of integer vectors $p_0 , \ldots , p_g \in \ZZ^r$ defined as follows.  Let $e_0 , \ldots, e_{r-1}$ denote the standard basis vectors in $\ZZ^r$, and let $e_r = (-1,-1, \ldots , -1)$.  We define
\[
p_j = (r,r-1, \ldots , 1) + \sum_{i=1}^j \sum_{\substack{x=0 \\ \exists y \text{ s.t. } t(x,y)=i}}^r e_x .
\]

\begin{remark}
This definition of the lingering lattice path is easily seen to coincide with that of \cite[Definition 4.3]{tropicalBN} in the case where the torsion profile $\vec{\mu}$ is identically zero.  There, $p_0$ is defined to be the vector $(r,r-1, \ldots , 1)$, and $p_j$ is defined recursively via the formula
\[
p_j = p_{j-1} + \sum_{\substack{x=0 \\ \exists y \text{ s.t. } t(x,y)=j}}^r e_x .
\]
When $\vec{\mu}$ is identically zero, the condition that $j$ appears in column $x$ can be satisfied by at most one value of $x$ for each $j$.  For simplicity, we define $p_j (r) = 0$ for all $j$.  By the definition of a tableau, we have $p_j (i) > p_j (i+1)$ for all $i$ and $j$.
\end{remark}

The relation between lattice paths and divisors on $\Gamma$ is given by the following proposition.

\begin{proposition}
\label{Prop:TableauToPath}
Given a $\vec{\mu}$-displacement tableau $t$ as above, let $\eta_1 , \ldots , \eta_g$ be real numbers such that $\eta_{t(x,y)} \equiv p_{t(x,y)-1} (x) \pmod{\mu_j}$.
Then, for $0 \leq i \leq r$, the divisors
\[
D_i = iv_1 + (r-i)w_g + \sum_{\substack{j=1 \\ \nexists y \text{ s.t. } t(i,y)=j}}^g \langle \eta_j - p_{j-1} (i) \rangle_j
\]
are all equivalent, and are elements of $\mathbf{T}(t)$.
\end{proposition}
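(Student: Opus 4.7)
My plan is to reduce both assertions—the equivalence $D_0 \sim \cdots \sim D_r$ and the membership $[D_i] \in \mathbf{T}(t)$—to a single combinatorial identity, by exploiting the linearity of the tropical Abel--Jacobi coordinates. Recall from the text that the functions $\tilde{\xi}_m = \xi_m - (m-1)$ assemble into a group homomorphism $\Pic(\Gamma) \to \RR/\mu_m\ZZ$. In these coordinates, equivalence of the $D_i$ is precisely the statement that $\xi^{\mathrm{std}}_m(D_i) := \tilde\xi_m([D_i]) + (m-1)$ is independent of $i$ modulo $\mu_m$, while $[D_i] \in \mathbf{T}(t)$ is the congruence $\xi^{\mathrm{std}}_m(D_i) \equiv y-x \pmod{\mu_m}$ whenever $t(x,y) = m$.

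A direct computation on the chain of cycles, reducing each degree-one divisor to its canonical form $(1-g)w_g + \sum_k \langle \eta_k\rangle_k$, yields $\tilde\xi_m([v_1]) = -1$, $\tilde\xi_m([w_g]) = 0$, and
\[
\tilde\xi_m([\langle a\rangle_j]) \;=\; \begin{cases} 0 & m < j, \\ a & m = j, \\ -1 & m > j. \end{cases}
\]
Writing $[P]$ for the Iverson bracket of a condition $P$ and applying additivity to the description of $D_i$, one obtains
\[
\tilde\xi_m([D_i]) \;=\; -i \;+\; \bigl(\xi_m - p_{m-1}(i)\bigr)\,[m \notin \mathrm{col}\,i] \;-\; \#\{\,l < m : l \notin \mathrm{col}\,i\,\}.
\]

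Now fix a box $t(x,y) = m$. The recursive definition of the lingering lattice path, together with strict column-increase in the tableau, gives
\[
p_{m-1}(i) \;=\; (r - i) \;+\; \#\{\,l < m : l \in \mathrm{col}\,i\,\} \;-\; \#\{\,l < m : l \in \mathrm{col}\,r\,\},
\]
and in particular $p_{m-1}(x) = (r-x) + y - \#\{l < m : l \in \mathrm{col}\,r\}$. If $m$ occurs in column $i$ at row $y_0$ then the indicator vanishes, $\#\{l<m : l\in\mathrm{col}\,i\} = y_0$, and the formula collapses to $\xi^{\mathrm{std}}_m(D_i) = y_0 - i = y - x$. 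If $m \notin \mathrm{col}\,i$, the hypothesis $\xi_m \equiv p_{m-1}(x) \pmod{\mu_m}$ makes every remaining $i$-dependent term cancel, again giving $\xi^{\mathrm{std}}_m(D_i) \equiv y - x \pmod{\mu_m}$. Symbols $m$ not appearing in $t$ are handled in parallel: the same substitution shows $\xi^{\mathrm{std}}_m(D_i)$ is $i$-independent. The hypothesis is consistent across multiple boxes containing $m$, since for any two such boxes $(x,y),(x',y')$ one has $p_{m-1}(x) - p_{m-1}(x') = (x'-x) + (y-y')$, which is a multiple of $\mu_m$ by the $\vec{\mu}$-displacement condition.

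The main technical obstacle is pinning down the basic Abel--Jacobi coordinates above. One has to verify, for instance, that a bridge difference $w_j - v_{j+1}$ is principal on all of $\Gamma$—realized by an explicit piecewise-linear function constant on each cycle and dropping by $n_j$ along the intervening bridge $\beta_j$—while an in-cycle difference such as $\langle -1\rangle_{j+1} - \langle 0\rangle_{j+1}$ instead contributes a nontrivial $-1$ in $\Jac(\gamma_{j+1})$. Once these point-class coordinates are tabulated, everything else is a mechanical application of the recursive formula for $p_j$.
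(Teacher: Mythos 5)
Your argument is correct and follows essentially the same route as the paper's proof: both exploit the linearity of $\tilde{\xi}_m$ to compute $\tilde{\xi}_m(D_i)$, use the identity $p_{m-1}(i) = (r-i) + C_{im} - C_{rm}$ (your column-count formula) to see that the result is independent of $i$, and then substitute $\xi_m \equiv p_{m-1}(x)$ to verify the congruence defining $\mathbf{T}(t)$. The only difference is that you derive the Abel--Jacobi coordinates of the individual point classes explicitly, whereas the paper cites the resulting formula directly.
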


Observe that the sum above is over all $j$ that are not contained in the $i^{\mathrm{th}}$ column of the tableau $t$.  In other words, $D_i$ has a chip on the $j^{\mathrm{th}}$ cycle $\gamma_j$ if and only if $j$ does not appear in the $i^{\mathrm{th}}$ column of $t$.

\begin{proof}
By \cite[Remark~3.5]{Pfl16b}, the function $\tilde{\xi}_j$ is a linear function on divisors.  Let $C_{ij}$ denote the number of terms smaller than $j$ that appear in the $i^{\mathrm{th}}$ column of the tableau $t$.  We compute
\[
\tilde{\xi}_j (D_i) =  1-i-j + \eta_j - p_{j-1} (i) + C_{ij} \\
= 1-r-j + \eta_j + C_{rj}.
\]
Since the right hand side does not depend on $i$, we see that $D_i \sim D_j$ for all $i$ and $j$.  Now, if $t(x,y) = j$, we have
\[
\eta_j = p_{t(x,y)-1} (x) = r-x+y - C_{rj}.
\]
Plugging this in, we obtain
\[
\tilde{\xi}_j (D_i) + j-1 \equiv y-x \pmod{\mu_j} .
\]
So $D_i \in \mathbf{T}(t)$.
\end{proof}

\subsection{Vertex avoiding divisors and bases of rational functions} By definition, a divisor in $W^r_d (\Gamma)$ has rank at least $r$.  It will be useful to focus on a subset of $W^r_d (\Gamma)$ consisting of divisors that have rank exactly equal to $r$.

\begin{definition}
We say that the divisor class $D$ described in Proposition \ref{Prop:TableauToPath} is \textbf{vertex avoiding} if for all $i$ and $j$,
\[
\eta_j -p_{j-1} (i) \neq -1 \pmod{\mu_j}
\]
and if
\[
\eta_j - p_{j-1} (i) \equiv 0 \pmod{\mu_j},
\]
then $t(i,y) = j$ for some $y$.
\end{definition}

The rank of such divisors in $W^r_d(\Gamma)$ is established by the following lemma.

\begin{lemma}
\label{Lem:ExactRank}
Let $t$ be a $\vec{\mu}$-displacement tableau with $r+1$ columns, and $D \in \mathbf{T}(t)$ a vertex avoiding divisor.  Then $D$ has rank exactly $r$.
\end{lemma}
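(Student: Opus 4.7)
The lower bound $r(D) \geq r$ is immediate: by the main theorem of~\cite{Pfl16b}, $\mathbf{T}(t) \subseteq W^r_d(\Gamma)$ whenever $t$ is a $\vec{\mu}$-displacement tableau with $r+1$ columns. My plan for the matching upper bound is to exhibit a single effective divisor $E$ of degree $r+1$ for which $D - E$ is not linearly equivalent to any effective divisor. The natural candidate is $E = (r+1) v_1$.

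The key tool is the representative
\[
D_r = rv_1 + \sum_{\substack{j=1 \\ \nexists y \,:\, t(r,y) = j}}^{g} \langle \xi_j \rangle_j
\]
of the class of $D$ provided by Proposition~\ref{Prop:TableauToPath}, using the convention $p_{j-1}(r) = 0$. If I can show that $D_r$ is already the $v_1$-reduced divisor in its class, then $D_r - (r+1) v_1$ is the $v_1$-reduced representative of the class of $D - (r+1) v_1$, and its coefficient at $v_1$ equals $-1$. Since $v_1$-reduced divisors with a negative coefficient at $v_1$ are never equivalent to an effective divisor, this immediately yields $r(D) < r+1$, completing the proof.

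To verify that $D_r$ is $v_1$-reduced, I apply Dhar's burning algorithm starting at $v_1$. Specialized to $i = r$, the vertex-avoiding hypothesis states that $\xi_j \not\equiv -1 \pmod{\mu_j}$ for every $j$, and that $\xi_j \not\equiv 0 \pmod{\mu_j}$ for every $j$ that does not appear in column $r$ of $t$; equivalently, every chip of $D_r$ lies strictly in the interior of its cycle, avoiding both $v_j$ and $w_j$. I then propagate the fire inductively on $j$. Fire entering $\gamma_j$ at $v_j$ spreads along both arcs: either it meets no chip (when $j$ appears in column $r$), or it converges on the unique interior chip from both sides; after subdividing at the chip, its chip-vertex has outdegree $2$ to the burning set against a chip count of $1$, so it ignites. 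In either case fire exits at $w_j$, crosses the chip-free bridge $\beta_j$, and enters $\gamma_{j+1}$. After $g$ steps the entire graph is burnt, and $D_r$ is $v_1$-reduced.

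The main delicate point is the correct application of Dhar's algorithm on a metric graph: one must subdivide at each chip before comparing outdegree to chip count, and the vertex-avoiding hypothesis is invoked precisely to guarantee that no chip coincides with a cycle endpoint $v_j$ or $w_j$, where a chip could obstruct the fire and potentially fail to burn, yielding a non-trivial fireable set.
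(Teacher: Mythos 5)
Your proof is correct and follows the same route as the paper: the lower bound is quoted from Pflueger, and the upper bound comes from observing that $D_r$ (equivalently $D_r - rv_1$) is $v_1$-reduced because its chips avoid the vertices $v_j$, $w_j$ and number at most one per cycle, so subtracting $(r+1)v_1$ leaves a $v_1$-reduced divisor that is negative at $v_1$. The paper simply asserts the reducedness from these properties, whereas you spell out the Dhar burning verification; that is the only difference.
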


\begin{proof}
By \cite{Pfl16b}, the divisor $D$ has rank at least $r$.  By definition, the divisor $D_r - rv_1$ is effective, has at most one chip on each cycle, and no chips on any of the vertices $v_j$ or $w_j$.  It follows that $D_r - rv_1$ is $v_1$-reduced.  Since $\deg_{v_1} (D_r - rv_1) = 0$, we see that $D$ cannot have rank larger than $r$.
\end{proof}




Unlike the case where $\Gamma$ has generic edge lengths, not every component $\mathbf{T}(t)$ contains vertex avoiding divisors.  If $D$ is a divisor of rank $r$ on $\Gamma$ whose class is vertex avoiding, then $D_i$ is the unique effective divisor $D_i \sim D$ such that $\deg_{v_1}(D_i) = i$ and $\deg_{w_g}(D_i) = r-i$.  Throughout, we will write $D=D_r$, and $\psi_i$ for a piecewise linear function on $\Gamma$ such that $D + \ddiv (\psi_i) = D_i$.  Note that $\psi_i$ is uniquely determined up to an additive constant, and for $i<r$ the slope of $\psi_i$ along the bridge $\beta_j$ is $p_j(i)$.   The function $\psi_r$ is constant, which justifies our convention that $p_j(r) = 0$ for all $j$.

\section{Deformations of maps: logarithmic and tropical}
\label{Sec:Deform}

The proof of the main lifting theorems, as well as the analysis of generic chains of cycles in the following section, will use the geometry of logarithmic prestable maps to toric varieties~\cite{AC11,Che10,GS13} and their relationship to Berkovich geometry~\cite{R16a,R15a,U16}. We provide a rapid overview of the relevant ideas and refer the reader to these references, as well as the surveys~\cite{ACGHOSS,ACMUW} for details.

A remark about the valued fields over which we work is in order before proceeding. Toric varieties and the moduli spaces of maps from curves to toric varieties are canonically defined over the integers; we will pull them back to the complex numbers and pass to their analytifications with respect to the trivial valuation. We remind the reader that the field of definition of a point in the analytification of a variety over the trivially valued field $\CC$ will typically be a transcendental valued field extension of $\CC$. In particular, maps from curves that we will consider in the sequel will typically be defined over an extension of $\CC(\!(t)\!)$, and their stable models will be nonconstant families over a valuation ring.

\subsection{Toric varieties, Artin fans, and tropicalization} Let $T$ be a torus of rank $r$ with character and cocharacter lattices $M$ and $N$ respectively. A complete fan $\Delta$ in $N_\RR$ defines a proper toric variety $Z$ which will always be considered to carry the canonical logarithmic structure coming from the toric boundary. The \textit{Artin fan} of $Z$ is the zero-dimensional logarithmic Artin stack defined by $\mathscr A_Z := [Z/T]$. This Artin stack is logarithmically \'etale over $\spec(\CC)$ and the logarithmic structure on $Z$ is obtained via pulling back the structure on $\mathscr A_Z$ via the quotient map~\cite[Section 2]{AW}.

We equip the ground field $\CC$ with the trivial valuation. The analytic formal fiber $Z^{\beth}$, as defined by Thuillier~\cite{Thu07,U16}, coincides with the Berkovich analytification since $Z$ is complete. It admits a continuous and proper tropicalization map
\[
\trop: Z^{\beth}\to \overline \Delta,
\]
where $\overline \Delta$ is the compactified cone complex of the fan $\Delta$, see~\cite[Section 2.6]{ACMUW}. The work of Ulirsch~\cite{U16} gives a stack theoretic enhancement, by identifying $\mathscr A_Z^\beth$ with the complex $\overline \Delta$. Precisely, there is a commutative diagram of underlying topological spaces
\[
\xymatrix{ &  \mathscr A_Z^{\beth} \ar[dd]^{\mu_Z} \\
Z^\beth \ar[ur]^{\mathrm{Quotient}} \ar[dr]_{\trop} & \\
& \overline \Delta,}
\]
where $\mu_Z$ is an isomorphism. 

\subsection{Tropical curves, tropical maps, and moduli cones} A \textit{prestable tropical curve} is a $1$-dimensional polyhedral complex $\Gamma$ with real edge lengths and finitely many $2$-valent vertices. It will be convenient for us to consider $\Gamma$ as being non-compact, by deleting $1$-valent vertices from the corresponding compact graph. For readers familiar with tropical curves in other contexts, we remark that we will not need to keep track of the genus function at the vertex at any stage, as we work with maximally degenerate curves. A reader who prefers to add this datum may do so.

\begin{definition}
A \textbf{tropical prestable map to $\Delta$} is a continuous morphism $\Psi: \Gamma\to \Delta$, such that
\begin{enumerate}[(A)]
\item The map $\Psi$ is a morphism of polyhedral complexes, i.e. every polyhedron in $\Gamma$ maps to a cone in $\Delta$.
\item Upon restriction to any edge $e$ of $\Gamma$, $\Psi$ has integral slope in $\Delta$, taken with respect to a primitive integral vector in the direction of $\Psi(e)$.
\item The map $\Psi$ is \textit{balanced}, i.e. at every point of $\Gamma$, the sum of outgoing slopes of $\Psi$ is zero.
\end{enumerate}
\end{definition}

By forgetting the fan structure on $\Delta$ and taking the minimal polyhedral complex structure on $\Gamma$, we obtain a continuous balanced map $\Gamma\to N_\RR$ which is referred to as a \textit{parametrized tropical curve} in much of the literature. We will refer to this as the \textit{underlying map} of the prestable map $\Gamma\to \Delta$. Concretely, it is obtained by forgetting all bivalent vertices in $\Gamma$, replacing $\Delta$ with its support, and considering the natural map between them.

\begin{definition}
The \textbf{combinatorial type} of a tropical prestable map $\Psi:\Gamma\to\Delta$, denoted $[\Psi:\Gamma\to\Delta]$, is the following data obtained from $\Psi$:
\begin{enumerate}[(A)]
\item The underlying finite graph of $\Gamma$.
\item For each vertex $v\in \Gamma$, the cone $\sigma_v$ of $\Delta$ containing $\Psi(v)$.
\item For each edge $e\in \Gamma$, the cone $\sigma_e$ to which $e$ maps, and the slope of $\Psi(e)$ in $\sigma_e$.
\end{enumerate}
\end{definition}

By a well-known construction~\cite{GS13,R16a}, the collection of all tropical stable maps of a given identification of their combinatorial type with $\Theta = [\Gamma\to \Delta]$ is naturally parametrized by a polyhedral cone $T_\Theta$. We will refer to this as the \textit{moduli cone} of tropical curves of type $\Theta$. It may be considered as a deformation space of $\Psi: \Gamma\to \Delta$.

\subsection{Logarithmic prestable maps} Given a proper toric variety $Z$, work of~\cite{AC11,AW, GS13} produces moduli spaces $\mathscr L(Z)$ and $\mathscr L(\mathscr A_Z)$ of logarithmic prestable maps to $Z$ and $\mathscr A_Z$ respectively. These are moduli stacks over the category of schemes. Given a map from a test scheme $S\to \mathscr L(Z)$, by definition, one obtains a logarithmic structure $(S,\mathscr M_S)$ on the base scheme, a nodal, logarithmically smooth, $S$-flat curve $\mathscr C\to (S,\mathscr M_S)$ and an arrow
\[
\mathscr C\to Z
\]
in the category of logarithmic schemes. A map obtained as above is referred to as a \textit{minimal logarithmic prestable map}. The terminology is meant to reflect that $\mathscr M_S$ is the logarithmic structure on $S$ with the the minimal restrictions that one can place in order to ensure that $\mathscr C\to Z$ is a map in the logarithmic category. General, non-minimal, prestable logarithmic maps are obtained by pulling back a minimal family along a logarithmic morphism
\[
(S,\mathscr M_{S}')\to (S,\mathscr M_S).
\]
By this description, the moduli space $\mathscr L(Z)$ parametrizes minimal logarithmic prestable maps to $Z$. Replacing $Z$ by $\mathscr A_Z$ one similarly obtains a moduli problem for (minimal) logarithmic prestable maps to the Artin fan.

For each minimal logarithmic prestable map $[C\to Z]$ over a geometric point, we obtain a combinatorial type for a tropical prestable map. The graph $\Gamma$ is taken to be the marked dual graph of $C$. Given a vertex $v\in \Gamma$ corresponding to a component $C_v$, we take the cone $\sigma_v$ to be the cone corresponding to the stratum to which the generic point of $C_v$ maps. Each edge $e$ of $\Gamma$ corresponding to a node $q\in C$ determines a cone $\sigma_e$ and a slope in it. Similarly, each infinite edge is also given a well-defined slope. We will have no reason to describe this type explicitly, so we refrain from describing this association in detail, see~\cite{R16a,R15a}.

\subsection{Tropical realizability}\label{sec: realizability} The moduli space $\mathscr L(Z)$ is in general extremely singular. In contrast, we have the following result of Abramovich and Wise~\cite{AW}.

\begin{theorem}
The stack $\mathscr L(\mathscr A_Z)$ is a logarithmically smooth Artin stack in the smooth topology.
\end{theorem}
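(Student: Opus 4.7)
The plan is to exhibit $\mathscr L(\mathscr A_Z)$ as logarithmically smooth by factoring its structure morphism through the moduli stack $\mathfrak M$ of prestable logarithmic curves via the forgetful map $\pi: \mathscr L(\mathscr A_Z) \to \mathfrak M$. Since $\mathfrak M$ is known to be a logarithmically smooth Artin stack (Kato--Olsson), it suffices to show that $\pi$ is logarithmically étale.

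The central observation powering this reduction is that $\mathscr A_Z$ is itself \emph{logarithmically étale} over $\spec(\CC)$ (with trivial log structure). Indeed, the quotient presentation $\mathscr A_Z = [Z/T]$ exhibits $\mathscr A_Z$ as having relative algebraic dimension $0$; the toric log structure on $Z$ descends to $\mathscr A_Z$, and the $T$-torsor projection $Z \to \mathscr A_Z$ is logarithmically smooth of relative dimension $\dim T = \dim Z$, precisely cancelling the log smoothness of $Z$. By the universal property spelled out in Section 3.1, a morphism from a logarithmic scheme $(S, \mathscr M_S)$ to $\mathscr A_Z$ amounts to compatible local sections of $\overline{\mathscr M}_S$ glued according to the face structure of $\Delta$; there is no algebraic data to deform.

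Given this, I would verify formal log smoothness of $\pi$ by the following lifting problem. Let $S \hookrightarrow S'$ be a strict logarithmic square-zero thickening, let $\mathscr C/S$ be a minimal log curve equipped with a map $f: \mathscr C \to \mathscr A_Z$, and let $\mathscr C'/S'$ be a deformation of $\mathscr C$. The log étaleness of $\mathscr A_Z$ over a point means that $f$ extends uniquely to a map $f': \mathscr C' \to \mathscr A_Z$ once the enlarged base log structure on $S'$ is specified. Concretely, at each geometric point of $\mathscr C'$ the required local data is determined by the characteristic monoids, which extend by strictness of the thickening. The only remaining task is to check that this lift is compatible with the minimality hypothesis on the base, which is the condition distinguishing $\mathscr L(\mathscr A_Z)$ from the stack of all logarithmic maps.

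The main obstacle will be bookkeeping around minimality. At a geometric point of $\mathscr L(\mathscr A_Z)$ corresponding to a combinatorial type $\Theta = [\Gamma \to \Delta]$, I would identify the characteristic monoid $\overline{\mathscr M}_S$ of the minimal log structure with the monoid of integer points in the dual of the moduli cone $T_\Theta$ of tropical prestable maps of type $\Theta$. Under this identification, formal deformations of $(\mathscr C, f)$ are parametrized locally by the affine toric scheme associated to $T_\Theta$, which is log smooth; faces of this cone correspond to degenerations of $\Theta$, giving smooth charts in the desired topology. Once this dictionary is set up, the log smoothness claim is essentially reduced to the smoothness of toric varieties, and the Artin stack structure follows from descent along the étale presentation by these toric charts.
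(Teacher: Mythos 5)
The paper does not actually prove this statement: it is quoted as a theorem of Abramovich and Wise, with a citation to \cite{AW}, and the surrounding text only records its consequence for the complete local rings of $\mathscr L(\mathscr A_Z)$. Your argument is, in outline, the proof given in that reference: $\mathfrak{M}$ is logarithmically smooth by Kato--Olsson, $\mathscr A_Z$ is logarithmically \'etale over $\spec(\CC)$ (a fact the paper itself records), hence the pullback of the relative logarithmic cotangent complex of $\mathscr A_Z$ vanishes and the forgetful map $\mathscr L(\mathscr A_Z)\to\mathfrak{M}$ is logarithmically \'etale; logarithmic smoothness follows by composition. So the proposal is correct and consistent with the source the paper relies on.

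Two points deserve more care than you give them. First, the ``dimension cancellation'' justification for log \'etaleness of $\mathscr A_Z$ is only heuristic (the quotient map $Z\to\mathscr A_Z$ is not a torsor along the boundary strata); the clean statement is that $\mathscr A_Z$ is covered by charts $[\spec(\CC[\sigma^\vee\cap M])/T]$, each of which is logarithmically \'etale over the point because the quotient presentation is strict and smooth of the same relative dimension as the log smooth toric chart --- equivalently, the logarithmic cotangent complex of $\mathscr A_Z$ over $\spec(\CC)$ vanishes. Second, and more substantively, your sentence about checking compatibility with ``the minimality hypothesis'' conceals the real content: that minimal objects exist, are stable under strict base change, and that the fibered category of minimal logarithmic maps is an algebraic stack at all is Wise's representability theorem (together with Gross--Siebert's basicness); it is logically prior to, not a corollary of, the infinitesimal lifting computation. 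Once that is granted, your identification of $\overline{\mathscr M}_S$ at a point of combinatorial type $\Theta$ with the dual monoid of the cone $T_\Theta$ is correct and matches the description of the complete local rings given in the paper.
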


A concrete consequence of this is that given a point $p\in \mathscr L(\mathscr A_Z)$, the complete local ring at $p$ is a regular local ring tensored with $\CC[\![Q]\!]$, where $Q$ is the monoid defining a toric variety.  In fact, if $\Theta$ is the combinatorial type associated to $p$, then $Q$ is the dual monoid to the cone $T_\Theta$ described previously. In the maximally degenerate cases that we consider here, the complete local ring at $p$ will be isomorphic to $\CC[\![Q]\!]$ itself.

\begin{notation}
For the remainder of the paper, we will have no need to work with the global moduli space of logarithmic maps. Instead, we will fix a moduli point $[C_0\to Z]$, which will always be clear from context, and understand $\mathscr L(Z)$ to be the deformation space of $[C_0\to Z]$ obtained by only examining those logarithmic strata of the moduli space that contain this point. We utilize analogous notation for maps to $\mathscr A_Z$.
\end{notation}

Despite its pathological singularity properties, the moduli space $\mathscr L(Z)$ carries a naturally perfect obstruction theory. By an observation of Abramovich and Wise, this perfect obstruction theory is obtained from the relative obstruction theory for the natural map $\mathscr L(Z)\to\mathscr L(\mathscr A_Z)$. Below, we record the aspect of this geometry that is relevant for us.

Consider a one-parameter family of logarithmic prestable maps $\mathscr C\to Z$ over a valuation ring $\spec(R)$. By~\cite{R15a,U16}, the skeleton $\overline \Gamma$ associated to the degeneration $\mathscr C$ maps onto the skeleton $\overline \Delta$ of $Z$.  Passing to the subset $\Gamma$ of $\overline{\Gamma}$ that maps to $\Delta$, we obtain a tropical prestable map. Since the analytic formal fiber of $\mathscr L(Z)$ consists of equivalence classes of degenerations of maps over valuation rings, this furnishes a \textit{set theoretic} tropicalization map from the locus of the formal fiber where the logarithmic structure is trivial: $\mathscr L(Z)^\beth \to T_\Theta$. Note that the $\beth$-space notation typically refers to the full formal fiber rather than the intersection with the locus where the logarithmic structure is trivial, however in this paper we will always use the latter. In practice, this means that the generic fiber of the curve associated to a moduli point in $\mathscr L(Z)^\beth$ is always taken to be smooth. 

The following is a local version of~\cite[Theorem D]{R16a}. We assume that the logarithmic map is \textit{stable}, i.e. there are no semistable components that are contracted to a point. This is merely to fit within the framework of~\cite{R16a}; stability does not play a crucial role in what follows. 

\begin{theorem}
Let $[C_0\to Z]$ be a logarithmic stable map of combinatorial type $\Theta$. There is a continuous and proper tropicalization map
\[
\trop: \mathscr L(\mathscr A_Z)^\beth\to T_\Theta.
\]
In particular, pre-composition with $\mathscr L(Z)\to \mathscr L(\mathscr A_Z)$ yields a continuous and proper tropicalization map
\[
\trop:\mathscr L(Z)^\beth \to T_\Theta
\]
that coincides with the set theoretic tropicalization map defined above.
\end{theorem}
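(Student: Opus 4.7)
The plan is to pass to a formal neighborhood of the fixed point $p=[C_0\to Z]$ on $\mathscr L(\mathscr A_Z)$, apply Thuillier's toric tropicalization there, and then descend the result to $\mathscr L(Z)$.

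First, I would invoke the Abramovich--Wise theorem recalled above: $\mathscr L(\mathscr A_Z)$ is logarithmically smooth, so the complete local ring at $p$ is a regular local ring tensored with $\CC[\![Q]\!]$, where $Q$ is the characteristic monoid of the minimal logarithmic structure at $p$. By the preceding discussion, $Q$ is canonically the dual monoid of the moduli cone $T_\Theta$. After discarding a logarithmically trivial factor that contributes nothing to tropicalization, the formal fiber of $\mathscr L(\mathscr A_Z)$ at $p$ is \'etale-locally modeled by the formal affine toric scheme $\mathrm{Spf}(\CC[\![Q]\!])$. For this toric model, sending a Berkovich semi-valuation $v$ to the real-valued function $q\mapsto v(q)$ on $Q$ yields a continuous and proper map to $\Hom(Q,\RR_{\geq 0})=T_\Theta$; this is Thuillier's tropicalization in its Artin-fan reformulation of~\cite{U16}. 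Gluing along an \'etale cover yields the map on $\mathscr L(\mathscr A_Z)^\beth$, inheriting continuity and properness from the local model.

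Next, I would pre-compose with the natural morphism $\mathscr L(Z)\to \mathscr L(\mathscr A_Z)$ induced by $Z\to \mathscr A_Z$ to obtain the tropicalization on $\mathscr L(Z)^\beth$. Continuity is automatic, and since $\mathscr L(Z)^\beth$ is compact (being the $\beth$-space of a Noetherian formal scheme), the resulting map into the Hausdorff space $T_\Theta$ is automatically proper. To identify it with the set-theoretic tropicalization, I would take a one-parameter degeneration $\mathscr C\to Z$ over $\Spec(R)$ for a valuation ring $R$. The induced arrow $\Spec(R)\to \mathscr L(\mathscr A_Z)$ pulls the characteristic monoid back into $R$, producing valuations of the smoothing parameters at the nodes of the central fiber. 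These valuations are precisely the edge lengths of the skeleton $\overline\Gamma$ used in~\cite{R15a,U16} to construct the associated tropical prestable map. Hence both the analytic and set-theoretic tropicalizations send this family to the same point of $T_\Theta$, and because such families are dense in $\mathscr L(Z)^\beth$, continuity forces agreement everywhere.

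The main obstacle is establishing the identification $Q^\vee = T_\Theta$ at $p$ in the first place. This requires matching the combinatorial description of the minimal monoid from~\cite{GS13,AC11} with the explicit linear relations cutting out the moduli cone $T_\Theta$: integrality of slopes along edges, and the balancing condition at each vertex. Once this dictionary is verified, the remainder of the argument is a direct transcription of Thuillier's toric theory through the Artin-fan language of~\cite{U16}.
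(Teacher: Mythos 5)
The paper does not actually prove this statement: it is imported wholesale as ``a local version of \cite[Theorem D]{R16a}'', resting on the Abramovich--Wise smoothness theorem and Ulirsch's identification $\mathscr A_Z^\beth\cong\overline\Delta$ quoted just above it. So there is no in-text proof to compare against; what you have written is a reconstruction of the argument in the cited references, and it is essentially the right one: logarithmic smoothness gives the formal-local model $\CC[\![Q]\!]$ (up to a regular factor), the identification $Q = T_\Theta^\vee$ turns the formal fiber into a formal toric scheme, and Thuillier's tropicalization in its Artin-fan form supplies the continuous proper map to $\Hom(Q,\RR_{\geq 0})$. Two soft spots are worth flagging. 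First, you correctly isolate the identification $Q^\vee\cong T_\Theta$ as the crux but do not carry it out; this is where the actual content lives (minimality \`a la Gross--Siebert is exactly what encodes the integrality of slopes and forces the balancing and compatibility relations cutting out $T_\Theta$). The paper takes this identification as given from the preceding discussion, so leaving it as a citation is defensible, but a self-contained proof would have to do that bookkeeping. Second, your closing ``density plus continuity'' step is superfluous and slightly misleading: by the paper's own description, \emph{every} point of $\mathscr L(Z)^\beth$ is an equivalence class of a family over a valuation ring, so the agreement of the two tropicalizations is a direct pointwise computation (pull back the characteristic monoid, read off edge lengths and vertex positions), with no density argument needed. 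Finally, note that the natural target of the Thuillier map on a formal toric model is the \emph{compactified} cone $\Hom(Q,\RR_{\geq 0}\cup\{\infty\})$, consistent with the paper's earlier convention of mapping $Z^\beth$ to $\overline\Delta$; the statement's $T_\Theta$ should be read in that extended sense, or one should check that the relevant semivaluations take finite values on $Q$.
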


\begin{definition}
A tropical prestable map $\Gamma \to \Delta$ of type $\Theta$ is said to be \textbf{realizable} if the corresponding point of $T_\Theta$ lies in the image of $\mathscr L(Z)^\beth$ in the tropicalization map above.
\end{definition}

The crux is that analytically locally near highly degenerate prestable maps, one may think of $\mathscr L(Z)\to \mathscr L(\mathscr A_Z)$, up to smooth factors, as being the inclusion of a subvariety of a toric variety, and we may tropicalize that subvariety. In the next section, we consider a situation where $\mathscr L(Z)$ meets the strata of $\mathscr L(\mathscr A_Z)$ in a smooth substack of expected dimension. In Section~\ref{Sec:Def}, we consider a more complicated setting, where the map from $\mathscr L(Z)$ to $\mathscr L(\mathscr A_Z)$ is far from transverse in this sense, and it will be analyzed directly.

\section{Lifting divisors on a generic chain of cycles}
\label{Sec:GenericLift}

By the results of~\cite{tropicalBN,JP14}, the chain of cycles with generic edge lengths may be thought of as a tropical incarnation of a Brill--Noether--Petri general curve. Together with specialization and tropical independence, this establishes the classical Brill--Noether and Gieseker--Petri theorems. A key combinatorial ingredient in these papers is the classification of special divisors on the generic chain of cycles, due to Cools, Draisma, Payne, and Robeva.  Subsequent work in this area, including work on the maximal rank conjecture~\cite{JP16A,JP16}, relies on an appropriate lifting theorem for such special divisors~\cite{CJP}. In this section, we give a new proof of the following.

\begin{theorem}
\label{Thm:GenericLift}
Let $\Gamma$ be a chain of $g$ cycles with generic edge lengths.  That is, the torsion profile of $\Gamma$ is identically zero.  Let $D \in W^r_d (\Gamma)$ be a vertex avoiding divisor class.  Then there exists a curve $C$ of genus $g$ and a divisor $\cD \in W^r_d (C)$ such that the tropicalization of $C$ is $\Gamma$ and the tropicalization of $\cD$ is $D$.
\end{theorem}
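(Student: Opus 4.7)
The plan is to apply the logarithmic and tropical realizability machinery recalled in Section~\ref{sec: realizability}. Starting from the vertex avoiding class $D$, use the piecewise linear functions $\psi_0, \ldots, \psi_r$ furnished by Proposition~\ref{Prop:TableauToPath} (with the convention that $\psi_r$ is constant) to assemble a tropical prestable map
\[
\Psi \colon \Gamma \to \PP^r_{\trop},
\]
whose $i$-th projective coordinate is $\psi_i$. Because the divisors $D_i = D + \ddiv(\psi_i)$ are all effective of degree $d$ and all equivalent to the common class $D$, the tuple $(\psi_0,\dots,\psi_r)$ descends to a continuous map to tropical projective space; the vertex avoiding hypothesis guarantees that the coordinatewise minimum (i.e.\ the tropical base locus) is zero everywhere, so the induced map is balanced on all of $\Gamma$.

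Next I would verify that $\Psi$ is non-superabundant in the sense of Section~\ref{sec: realizability}, i.e.\ the image of each cycle $\gamma_j$ spans the full target $\RR^r$. The slopes of the $\psi_i$ along the bridges are prescribed by the lingering lattice path, and the slopes along the top and bottom edges of $\gamma_j$ differ by a rank-one contribution in the direction $e_x$ whenever the letter $j$ occupies column $x$ of the tableau $t$. A direct inspection of the lingering lattice path associated to a rectangular tableau with $r+1$ columns shows that every standard basis direction, including the anti-diagonal direction $e_r$, is realized somewhere along $\Gamma$, and the assumption of generic edge lengths excludes any accidental alignment of the images of two distinct cycles. Each cycle therefore contributes a genuine loop in $\RR^r$ spanning the full ambient space, which is the combinatorial analogue of the Brill--Noether--Petri genericity of~\cite{tropicalBN}.

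It remains to lift. Choose a proper toric variety $Z$ with torus $\mathbb{G}_m^r$ whose fan contains the combinatorial type $\Theta$ of $\Psi$, and let $[\Psi]\in T_\Theta$ be the moduli point of this tropical map. By the Abramovich--Wise theorem, $\mathscr L(\mathscr A_Z)$ is logarithmically smooth. Non-superabundance of $\Psi$ implies that the relative obstruction for $\mathscr L(Z)\to \mathscr L(\mathscr A_Z)$ vanishes along the stratum of type $\Theta$, so $[\Psi]$ lies in the image of the continuous and proper tropicalization map $\mathscr L(Z)^{\beth}\to T_\Theta$. Thus there exists a logarithmic stable map $F\colon C \to Z$ defined over a non-archimedean extension of $\CC$ with $F^{\trop}=\Psi$, and by construction the skeleton of $C$ is $\Gamma$. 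Setting $\cD := F^{*}H$ for a torus-invariant hyperplane $H$ corresponding to the coordinate $\psi_i$ yields $\trop(\cD) = D_i \sim D$; the rank of $\cD$ is at least $r$ because $F$ is nondegenerate, while Baker's specialization lemma combined with Lemma~\ref{Lem:ExactRank} pins it at exactly $r$, so $\cD \in W^r_d(C)$. The principal technical obstacle is the verification that non-superabundance really forces the relative obstruction to vanish and hence smoothness of $\mathscr L(Z)\to \mathscr L(\mathscr A_Z)$ at $[\Psi]$; this reduces to the Nishinou--Siebert type analysis in~\cite{CFPU, NS06}, which is precisely the ingredient that becomes unavailable in the superabundant setting addressed by Theorem~\ref{thm: lifting}.
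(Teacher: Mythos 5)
Your overall strategy is exactly the paper's: build $\Psi$ from the distinguished functions $\psi_i$, verify non-superabundance, invoke the logarithmic deformation theory (Theorem~\ref{unobstructed-lifting}, via \cite{CFPU}) to lift, and pull back the hyperplane class. However, two of your justifications do not hold up as written. First, the unmodified map $(\psi_0,\dots,\psi_{r-1}):\Gamma\to\RR^r$ is \emph{not} balanced: at each point $p$ in the support of some $D_i$ the function $\psi_i$ bends while the others are linear, so the outgoing slopes sum to a nonzero multiple of $e_i$. Triviality of the tropical base locus does not repair this. One must first perform a tropical modification, attaching an infinite ray in the direction $e_i$ at each point of $\operatorname{supp}(D_i)$ (this is where vertex avoidance and genericity are used, to ensure these rays avoid $v_j$ and $w_j$ and are based at distinct points); only the modified graph carries a balanced prestable map to the fan of $\PP^r$, which is the object the realizability machinery accepts.

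Second, your non-superabundance argument establishes the wrong thing: showing that every direction $e_0,\dots,e_r$ is realized \emph{somewhere along $\Gamma$} does not show that \emph{each individual cycle} spans $\RR^r$, and ``generic edge lengths exclude accidental alignment'' is not a substitute. The correct argument (Proposition~\ref{Prop:GenericSpan}) uses the modification from the previous point: by balancing, the span of $\Psi(\gamma_j)$ equals the span of the rays attached to $\gamma_j$ together with the two bridge directions; since the torsion profile is zero, the symbol $j$ appears in at most one column of the tableau, so at least $r$ of the divisors $D_0,\dots,D_r$ have a chip on $\gamma_j$, contributing $r$ distinct directions among $e_0,\dots,e_r$, any $r$ of which are linearly independent. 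Finally, you assert nondegeneracy of $F$ without proof; the paper deduces it from the tropical independence of $\psi_0,\dots,\psi_r$ (a linear relation among the $f_i$ would tropicalize to a tropical dependence), which is a needed input rather than a formality. Each of these is fixable, but as written the proposal's central combinatorial verification is incomplete.
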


The main result of~\cite[Theorem 1.1]{CJP} is that \emph{any} curve $C$ of genus $g$ whose tropicalization is $\Gamma$ possesses a divisor class $\cD \in W^r_d (C)$ whose tropicalization is $D$. This confirmed a conjecture of Cools, Draisma, Payne, and Robeva~\cite[Conjecture 1.5]{tropicalBN}. It may be possible to prove this stronger result using these techniques by more carefully analyzing the deformation theory, but we will only require the weaker statement.

The primary input from logarithmic deformation theory is the following result of Cheung, Fantini, Park, and Ulirsch~\cite[Theorem 1.1]{CFPU}. We record a proof in the context of stable maps theory, since it gives rise to a mild strengthening of the statement, see Remark~\ref{rem:CFPU}.

\begin{definition}\label{def: superabundant}
Let $\Psi:\Gamma \to \RR^r$ be balanced, piecewise linear map. Assume that $\Gamma$ is trivalent and the image of $[\Gamma]$ in $\mathcal M_g^{\trop}$ is the chain of $g$ cycles. The map $\Psi$ is said to be \textbf{superabundant} if there exists a cycle $\gamma_i$ such that $\Psi(\gamma_i)$ is contained in an affine hyperplane of $\RR^r$. The map $\Psi$ is said to be \textbf{non-superabundant} otherwise.
\end{definition}

Note that for general graphs $\Gamma$ that do not stabilize to the chain of cycles, there are more exotic ways for maps to be superabundant than having cycles contained in hyperplanes. See~\cite[Definition 4.1]{CFPU} for the general definition.

\begin{theorem}\label{unobstructed-lifting}
Let $\Psi:\Gamma\to \RR^r$ be a balanced piecewise linear map. Assume that $\Gamma$ is trivalent, the image $[\Gamma^{\mathrm{stab}}]$ of $[\Gamma]$ in $\mathcal M_g^{\trop}$ is a chain of $g$ cycles, and that $\Psi$ is non-superabundant. Then $\Psi$ is realizable. Precisely, there exists a valuation ring $R$ extending $\CC$ and a family of cures $\mathcal C$ over $\spec R$ with smooth generic fiber and equipped with a map
\[
\mathcal C\to Z
\]
whose tropicalization is equal to $\Psi$.
\end{theorem}

\begin{proof}
Let $\Delta$ be the complete fan in $\RR^r$ defining the toric variety $Z$.  After subdivision, this determines a prestable tropical map $\Gamma\to\Delta$ of combinatorial type $\Theta$. We momentarily assume that $[\Gamma]$ has integer edge lengths. Since $\Gamma$ is at most trivalent, by a standard construction, see~\cite[Proposition 2.8]{CFPU} or~\cite[Proposition 5.7]{NS06}, there is a degenerate logarithmic prestable map $[C'_0\to \mathscr Z]$ over $\spec(\NN\to \CC)$, where $\mathscr Z$ is a toric degeneration of $Z$ over $\CC[\![t]\!]$. By replacing this degeneration with a toroidal modification, we may assume that $\mathscr Z$ is a toroidal modification of the trivial degeneration $Z\times \spec \CC[\![t]\!]$. Composing $C'_0\to\mathscr Z$ with this map, we obtain a logarithmic prestable map $[C'_0\to Z]$ over $\spec(\NN\to \CC)$. By the universal property of minimality, there is a natural factorization
\[
\xymatrix{
C'_0 \ar[d] \ar[r] & C_0 \ar[r]\ar[d] & Z \\
\spec(\NN\to \CC) \ar[r] & \spec(Q\to \CC) &  ,}
\]
where $\spec(Q\to \CC)\to \mathscr L(Z)$ is minimal. The map $[C'_0\to Z]$ may now be discarded, and we work with $[C_0\to Z]$. When $\Gamma$ has general edge lengths, we simply choose a tropical map with integer edge lengths in the same combinatorial type as $\Psi$ to construct the minimal map above.

Let $[f_0:C_0\to Z]$ be the minimal logarithmic map determined above. We claim that $\mathscr L(Z)$ is logarithmically smooth at $[f_0]$. To see this, consider the logarithmic tangent-obstruction complex for $[f_0]$
\[
\cdots \to H^1(C_0,T_{C_0}^{\mathrm{log}}) \to H^1(C_0,f_0^\star T_{Z}^{\mathrm{log}})\to \mathrm{Ob}([f_0])\to 0.
\]
The first term encodes the infinitesimal logarithmic deformations of $C_0$, the second encodes obstructions to deforming the map fixing the curve, and the final term is the absolute obstructions of the map and curve. By~\cite[Section 4]{CFPU}, the non-superabundance of $\Psi$ forces surjectivity of the first arrow above. By exactness, it follows that $\mathrm{Ob}([f_0]) = 0$. A well-known argument implies that the space of maps is logarithmically smooth at $[f_0]$. We include it for completeness. By semicontinuity for the rank of the obstruction group, there is an open set $U$ in $\mathscr L(Z)$ containing $[f_0]$ where the obstruction space vanishes. Let $\mathbb E^\bullet$ denote the relative obstruction theory of $\mathscr L(Z)\to\mathscr L(\mathscr A_Z)$. It follows from unobstructedness of maps in $U$ that the complex $\mathbb E^\bullet$ has locally free cohomology in degree $0$ and vanishing cohomology in degree $-1$, on $U$. By the definition of a perfect obstruction theory, the logarithmic cotangent complex $\mathbb L^{\mathrm{log}}_{\mathscr L(Z)}$ also has locally free cohomology in degree $0$ and vanishing cohomology in degree $-1$, on $U$. It follows from this calculation that $\mathscr L(Z)$ is logarithmically smooth, and therefore formally locally isomorphic to the toric variety $\spec \CC[\![Q]\!]$. As a consequence, any map $\spec(\NN\to \CC)\to\mathscr L(Z)$ can be extended to a map from $\spec \CC[\![t]\!]\to\mathscr L(Z)$ where the latter is given the divisorial logarithmic structure at the closed point, and the generic point maps into the locus with trivial logarithmic structure. The result follows. 
\end{proof}

\begin{remark}\label{rem:CFPU}
The argument above gives a slightly stronger conclusion than the one recorded in~\cite{CFPU}. Specifically, the space of realizations of the tropical map has the structure of a fiber of a point under the projection to the skeleton of the analytification of a toroidal stack.This is precisely the situation for the moduli space of curves~\cite{ACP}. The result in~\cite{CFPU} assert that these fibers are nonempty. In the traditional context, the fibers of tropicalization are thought of as the \textit{initial degenerations}, and the argument here shows that the initial degenerations are smooth and of the expected dimension. Note that the trivalence in the above theorem is used only to construct the special fiber of the family; only a weaker hypothesis is needed to do so in \cite{CFPU}. 
\end{remark}

In order to prove Theorem~\ref{Thm:GenericLift}, it suffices to construct a non-superabundant map $\Psi : \Gamma \to \PP^r_{\trop}$ such that the hyperplane class is equivalent to the chosen divisor $D$.  This is done as follows. Let $D$ be a vertex avoiding divisor on $\Gamma$ with corresponding tableau $t$ and lattice path $\vec{p}$.  We perform a tropical modification to $\Gamma$, i.e. we append trees to $\Gamma$, in the following manner.  For each index $i$, and every $j$ that does not appear in the $i^{\textnormal{th}}$ column of the tableau, we attach an infinite ray to the $j^{\textnormal{th}}$ cycle based at $\langle \eta_j - p_{j-1} (i) \rangle_j$.  In other words, we attach an infinite ray at the point of the $j^{\textnormal{th}}$ cycle in the support of $D_i$.  By our assumptions that $D$ is vertex avoiding and that the edge lengths are generic, no two of these rays are based at the same point, and no ray is based at $v_j$ or $w_j$ for any $j$.  We also attach infinite rays based at $v_1$ and $w_g$.

Recall from the end of Section~\ref{Sec:TheGraph} that given a vertex avoiding divisor $D$ on $\Gamma$ of rank $r$, there exist \textit{canonical} piecewise linear functions $\psi_0,\ldots, \psi_r$, which carry $D$ to each of its distinguished representatives $D_0,\ldots, D_r$.

Consider the map $\Psi: \Gamma \to \mathbb{R}^r$ given by
\[
\Psi(p) = \left( \psi_0 (p) , \ldots , \psi_{r-1} (p) \right) .
\]
Note that the infinite rays of $\Gamma$ map to translates of the $r$ coordinate vectors $e_0, \ldots , e_{r-1}$ and $e_r = (-1,-1,\ldots,-1)$, so the map $\Psi$ extends to a map to tropical projective space.  More specifically, if an infinite ray is based at a point in the support of $D_i$, then it is a translate of the vector $e_i$.  We now consider the linear span of each cycle of $\Psi(\Gamma)$.

\begin{proposition}
\label{Prop:GenericSpan}
The image of each cycle in $\Psi(\Gamma)$ spans the entire vector space $\mathbb{R}^r$.
\end{proposition}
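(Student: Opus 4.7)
The image $\Psi(\gamma_j)$ is a closed piecewise-linear loop in $\RR^r$, and it lies in some affine hyperplane exactly when the linear span $V$ of its tangent directions is a proper subspace of $\RR^r$. Since the difference between the tangent directions immediately before and after any kink of $\Psi|_{\gamma_j}$ lies in $V$, the plan is to identify enough such kink vectors on $\gamma_j$ to generate all of $\RR^r$.

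By Proposition~\ref{Prop:TableauToPath}, for each $i \in \{0, \ldots, r\}$ such that $j$ does not appear in the $i^{\mathrm{th}}$ column of the tableau $t$, the divisor $D_i$ has a chip on $\gamma_j$ at the interior point $P_{j,i} = \langle \xi_j - p_{j-1}(i)\rangle_j$; the vertex-avoiding and genericity hypotheses ensure that these chip points are mutually distinct interior points of the cycle (the values $p_{j-1}(i)$ are strictly decreasing in $i$, and $\mu_j = 0$ leaves no modular identifications). Since $\ddiv(\psi_k) = D_k - D_r$, a direct slope-jump calculation shows that as $\Psi$ traverses $P_{j,i}$, its tangent direction changes precisely by $e_i$: for $i < r$ only the slope of $\psi_i$ jumps, by $+1$, while at $P_{j,r}$ the slope of every $\psi_k$ with $k < r$ jumps by $-1$, producing the vector $e_r = (-1, \ldots, -1)$.

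In the generic setting $\vec{\mu} = \vec{0}$, the $\vec{\mu}$-displacement condition forces any symbol $j$ to appear in at most one column of $t$, since two boxes containing $j$ would need to lie at lattice distance zero from each other. Hence the set of kink vectors encountered as $\Psi$ traverses $\gamma_j$ contains $\{e_i : 0 \leq i \leq r,\ i \neq x_j\}$, where $x_j$ denotes the column containing $j$ (or the full set $\{e_0, \ldots, e_r\}$ appears when $j$ is absent from $t$). Using the relation $\sum_{i=0}^r e_i = 0$, a brief case check shows that removing any single $e_i$ from $\{e_0, \ldots, e_r\}$ still leaves $r$ linearly independent vectors, so $V = \RR^r$ in every case and the proposition follows. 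The place I would expect to tread most carefully is the slope-jump bookkeeping and the boundary cycles $\gamma_1$ and $\gamma_g$, where the $iv_1$ and $(r-i)w_g$ summands in $D_i$ produce additional direction changes at $v_1$ or $w_g$; these occur at the vertices rather than the interior and can only enlarge $V$, so they do not affect the conclusion.
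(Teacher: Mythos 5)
Your proof is correct and follows essentially the same route as the paper's: it identifies, at the support points of the divisors $D_i$ on the cycle $\gamma_j$, direction changes equal to $e_i$, uses that $\mu_j=0$ forces the symbol $j$ to occupy at most one column so that at least $r$ of $e_0,\ldots,e_r$ occur, and concludes since any $r$ of these vectors are independent. The only cosmetic difference is that the paper phrases the direction changes via the balancing condition and the attached infinite rays (which are translates of the $e_i$), whereas you compute the slope jumps of $\Psi|_{\gamma_j}$ directly; these are the same data.
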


\begin{proof}
By the balancing condition, we see that the span of each cycle is equal to that of the infinite rays based at that cycle, plus that of the two bridges emanating from that cycle.  The infinite rays based at the $j^{\mathrm{th}}$ cycle are translates of the the vectors $e_i$ for all $i$ such that $j$ does not appear in the $i^{\mathrm{th}}$ column of the tableau.  By genericity of the edge lengths, the number $j$ can appear in at most one column of the tableau.  It follows that translates of at least $r$ of the vectors $e_0 , \ldots , e_r$ are based at the $j^{\textnormal{th}}$ cycle.  Since any $r$ of these vectors are linearly independent, we see that each cycle spans the entire space $\mathbb{R}^r$.
\end{proof}

\begin{proof}[Proof of Theorem~\ref{Thm:GenericLift}]
Let $D$ be a vertex avoiding divisor on $\Gamma$, and $\Psi : \Gamma \to \mathbb{P}_{\trop}^r$ the corresponding map.  By Proposition~\ref{Prop:GenericSpan}, the image of each cycle in $\Psi(\Gamma)$ spans $\mathbb{R}^r$.  Therefore, by Theorem~\ref{unobstructed-lifting}, there exists a smooth curve $C$ of genus $g$ over a non-archimedean extension of $\CC$ and a map $F: C \to \PP^r$ such that the tropicalization of $F$ is $\Psi$.  The map $F$ is given coordinatewise by
\[
F(p) = \left( f_0 (p), \ldots , f_r (p) \right),
\]
where $\trop (f_i) = \psi_i$.  Note that the functions $\psi_i$ are tropically independent by the remark following \cite[Notation~4.3]{JP16}, hence the functions $f_i$ are linearly independent, and the map $F$ is nondegenerate.  It follows that $F^\star \cO_{\PP^r} (1)$ is a divisor of rank at least $r$ on $C$ whose tropicalization is $D$.
\end{proof}

The main result of \cite{tropicalBN} is a new proof of the Brill--Noether theorem, using tropical techniques.  The argument proceeds by considering a chain of cycles $\Gamma$ with generic edge lengths, and letting $C$ be a curve over a valued field whose tropicalization is $\Gamma$.  Together with \cite[Theorem~6.9]{Gubler07}, Baker's specialization lemma \cite{Bak08} implies that $\dim W^r_d (C) \leq \dim W^r_d (\Gamma)$.  Since $\dim W^r_d (\Gamma) = \rho (g,r,d)$, we obtain the upper bound on $\dim W^r_d (C)$.  To obtain a lower bound, the authors use the results of Kempf and Kleiman--Laksov~\cite{Kempf, KleimanLaksov}, who establish this result by a study of the degeneracy locus of a map of vector bundles on the Picard variety. Using Theorem~\ref{Thm:GenericLift}, we can provide a proof that does not rely on the results of Kempf and Kleiman--Laksov.  Together with the results of~\cite{tropicalBN}, this completes a ``purely tropical'' proof of the Brill-Noether theorem.

\begin{theorem}[\textit{Brill--Noether theorem}]\label{thm: Kempf-Kleiman-Laksov}
Let $C$ be a general curve of genus $g$.  Then
\[
\dim W^r_d (C) = \rho (g,r,d).
\]
\end{theorem}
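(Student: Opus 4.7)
The plan is to combine Theorem~\ref{Thm:GenericLift} with Baker's specialization lemma to replace the existence statement of Kempf and Kleiman--Laksov used in the classical argument. Let $\Gamma$ be the chain of $g$ cycles with generic edge lengths, so that $\Gamma$ has trivial torsion profile. The classification in~\cite{tropicalBN} realizes $W^r_d(\Gamma)$ as a disjoint union of real tori $\mathbf{T}(t)$ indexed by standard Young tableaux $t$ of shape $(r+1) \times (g-d+r)$, each of dimension $g-(r+1)(g-d+r) = \rho(g,r,d)$, and in the trivial torsion case the vertex avoiding divisors form a dense open subset of each $\mathbf{T}(t)$. Baker's specialization lemma~\cite{Bak08} together with \cite[Theorem~6.9]{Gubler07} gives $\dim W^r_d(C) \le \rho(g,r,d)$ for any smooth curve $C$ over a non-archimedean extension of $\CC$ whose tropicalization is $\Gamma$.

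For the matching lower bound, assume $\rho \ge 0$, fix any standard Young tableau $t$, and choose a vertex avoiding $D \in \mathbf{T}(t)$. Theorem~\ref{Thm:GenericLift} produces a non-archimedean curve $C$ and a divisor class $\cD \in W^r_d(C)$ lifting $D$, so $W^r_d(C)$ is nonempty and, combined with the upper bound, $\dim W^r_d(C) = \rho(g,r,d)$ for this specific $C$. The main technical point is propagating non-emptiness from this $C$ to a general smooth complex curve. For this we exploit the fact that Theorem~\ref{Thm:GenericLift} is proved via infinitesimal lifting along the log smooth morphism $\mathscr L(Z) \to \mathscr L(\mathscr A_Z)$ of Theorem~\ref{unobstructed-lifting}: log smoothness permits pulling the lift back along any log smooth deformation of the central nodal curve. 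Applying this to the versal log deformation of the nodal curve of dual graph $\Gamma$, whose base dominates a Zariski neighborhood of the corresponding point of $\overline\cM_g$, yields a relative family of degree $d$ divisor classes of rank at least $r$ over a Zariski dense open of $\cM_g$, so $W^r_d(C') \ne \emptyset$ for a general curve $C'$.

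The conclusion assembles three ingredients: the non-emptiness just established; the determinantal description of $W^r_d(C')$ as a degeneracy locus of expected codimension $(r+1)(g-d+r)$ in the $g$-dimensional Picard variety, which forces every component to have dimension at least $\rho(g,r,d)$; and upper semicontinuity of fiber dimension on the proper relative Brill--Noether scheme $\cW^r_d \to \cM_g$. The bound $\dim W^r_d(C) \le \rho$ at our specific non-archimedean curve propagates by upper semicontinuity to a Zariski dense open in $\cM_g$, while the determinantal lower bound gives the reverse inequality wherever $W^r_d$ is nonempty. Intersecting these two dense opens yields the equality $\dim W^r_d(C') = \rho(g,r,d)$ for a general $C' \in \cM_g$, as required.
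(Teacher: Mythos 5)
Your argument reaches the right conclusion and shares the paper's key inputs (Theorem~\ref{Thm:GenericLift} for existence, Baker's specialization lemma for the upper bound), but it completes the lower bound by a genuinely different route. Where you invoke the determinantal description of $W^r_d(C')$ as a degeneracy locus to force every nonempty component to have dimension at least $\rho(g,r,d)$, the paper deliberately avoids that classical input: the point of Theorem~\ref{thm: Kempf-Kleiman-Laksov} here is a ``purely tropical'' proof that does not pass through the Kempf/Kleiman--Laksov circle of ideas or intersection-theoretic expected-dimension bounds. Instead, the paper bounds the dimension from below tropically: Theorem~\ref{Thm:GenericLift}, applied over all generic chains of cycles and all tableaux, shows that $\trop(\widetilde \cW^{r,\an}_d)$ inside the universal symmetric power has dimension at least $3g-3+\rho(g,r,d)+r$, the fact that tropicalization does not increase dimension (\cite[Theorem 1.1]{U15}) transfers this to $\widetilde\cW^r_d$ itself, and the fiber dimension $r$ over $\cW^r_d$ is peeled off using Lemma~\ref{Lem:ExactRank} and semicontinuity of rank. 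Your route is logically sound and not circular --- the degeneracy-locus dimension bound is independent of the existence theorem --- but it re-imports exactly the classical machinery this theorem is meant to replace.

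The one step you should tighten is the propagation of non-emptiness to a general complex curve. Smoothness of $\mathscr L(Z)\to\mathscr L(\mathscr A_Z)$ lets you lift deformations of the map to the Artin fan; it does not by itself produce a family of maps over (a dense open of) the versal deformation space of the nodal curve, since you must first deform the map to $\mathscr A_Z$ along that family and then argue that the resulting component of the space of maps dominates $\cM_g$. Formal or logarithmic smoothness alone does not guarantee dominance --- a logarithmically smooth space can sit entirely over a boundary stratum. The paper closes this by observing that the image of the relevant component in $\cM_g^{\trop}$ has full dimension $3g-3$, because the lifting theorem applies for all generic edge lengths of $\Gamma$ and these sweep out a full-dimensional cone; this forces the component of $\cW^r_d$ to dominate $\cM_g$. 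Some version of that argument is needed in your write-up as well; with it, your proof is complete.
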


\begin{proof}
By the discussion above, we have $\dim W^r_d (C) \leq \rho (g,r,d)$.  It suffices to prove the reverse inequality.

Let $\mathscr C\to \cM_g$ be the universal curve and let $\cW^r_d$ be the universal $W^r_d$ over $\cM_g$, parameterizing pairs $(C,[D])$ where $C$ is a curve of genus $g$ and $[D]$ is a divisor class on $C$ of degree $d$ and rank at least $r$.  We will use a dimension estimate on $\widetilde \cW^r_d\subset \mathrm{Sym}^d(\mathscr C)$, the subset of the universal symmetric product, parametrizing pairs $(C,D)$ where $C$ is a curve of genus $g$ and $D$ is a divisor on $C$ of degree $d$ and rank at least $r$. This is the inverse image of $\cW^r_d$ in the map from the universal symmetric power to the universal Picard variety.

Let $[\mathrm{Sym}^d(\overline{\mathscr C})]$ be the $d^{\mathrm{th}}$ stack symmetric fibered power of the universal stable curve over $\Mbar_g$. As a stack, we may take an \'etale cover by the $d$-fold fiber product of the universal stable curve $\overline {\mathscr C}$ over $\Mbar_g$. This stack is logarithmically smooth (toroidal) in the \'etale topology and the forgetful morphism to $\Mbar_g$ is logarithmically smooth (toroidal).

Let $[\mathrm{Sym}^d(\overline{\mathscr C},\Gamma)]$ be the analytic domain of the analytification $[\mathrm{Sym}^d(\overline{\mathscr C})]^{\an}$ parametrizing families of pairs $(\mathcal C,\mathcal D)$ over a valuation ring such that the minimal skeleton $\Sigma(\mathcal C)$ is a chain of cycles. Let $\mathrm{Sym}^{d,\trop}(\Gamma)$ denote the symmetric product of the universal tropical curve, in the combinatorial type of the chain of cycles. Note that we can pass to a finite cover, so we may assume such a tropical universal curve exists in the category of cone complexes. The general structure results of~\cite[Section 6]{ACP} furnish a continuous tropicalization map
\[
\trop: [\mathrm{Sym}^{d,\an}(\overline{\mathscr C},\Gamma)]\to \mathrm{Sym}^{d,\trop}(\Gamma)
\]
sending a one-parameter family of pairs $(\mathcal C,\mathcal D)$ over a valuation ring to its image under the canonical retraction to the minimal skeleton $\Sigma(\mathcal C)$. Since the map to $\Mbar_g$ is logarithmically smooth, this tropicalization is compatible with the algebraic and tropical forgetful morphisms to the moduli space of curves.

It follows from Theorem~\ref{Thm:GenericLift} that $\trop(\widetilde \cW^{r,\an}_d)$ has dimension at least $3g-3+\rho(g,r,d)+r$. Given a substack $\mathcal Y$ of a logarithmically smooth stack $\mathcal X$, the dimension of $\mathcal Y$ is at least the dimension of $\trop(\mathcal Y)$, see~\cite[Theorem 1.1]{U15}. Thus, the dimension of $\widetilde \cW^r_d$ is at least $3g-3+\rho(g,r,d)+r$.  Combining Baker's specialization lemma with Lemma~\ref{Lem:ExactRank}, we see that there exists a $(C,D) \in \widetilde \cW^r_d$ such that $D$ has rank exactly $r$.  By semicontinuity of the rank, it follows that the relative dimension of $\widetilde{\mathcal W}^r_d\to \mathcal W^r_d$ is $r$ over an open subset of the target.  We conclude that $\cW^r_d$ has a component of dimension at least $3g-3+\rho (g,r,d)$.  By Theorem~\ref{Thm:GenericLift}, the image of this component in $\cM_g^{\text{trop}}$ has dimension $3g-3$.  Thus, the component dominates $\mathcal M_g$, and the fibers have dimension $\rho (g,r,d)$.
\end{proof}

The proof above would be more natural if there were a natural tropicalization map on $\mathcal W^r_d$ itself, or on the universal Picard variety. In lieu of this, we have had to work with the universal symmetric power.

\section{Extended example: Trigonal curves of genus $5$}
\label{Sec:Genus5}

As an example of what is to follow, we describe the Brill--Noether theory of a general trigonal curve of genus 5.  We show that such a curve has a divisor of degree 5 and rank 2, and a divisor of degree 8 and rank 4.  The results of this section can be obtained entirely via classical techniques. Indeed, these facts follow directly from the Riemann--Roch theorem. Nonetheless, we develop the theory using chains of cycles to foreshadow and motivate the approach of the later sections.

Let $\Gamma$ be a chain of 5 cycles with torsion profile
\[
\mu_i = \left\{ \begin{array}{ll}
3 & \textrm{if $i=3$,}\\
0 & \textrm{otherwise.}
\end{array} \right.
\]
The graph $\Gamma$ is trigonal with the divisor $E = 3v_3$ having rank 1.  Figure~\ref{Fig:Trigonal} depicts the divisor $E_1$ (in white) and the divisor $E_0$ (in black), together with the function $\varphi_0$ such that $\ddiv \varphi_0 = E_0 - E_1$.  We label the slopes of $\varphi_0$, going from left to right, on the bridges and bottom edges.  All other slopes are either 0 or 1.

\begin{figure}[h!]
\begin{tikzpicture}

\draw (0,0) circle (0.5);
\draw [ball color=white] (-0.5,0) circle (0.55mm);
\draw [ball color=white] (0.36,0.36) circle (0.55mm);
\draw (0,-0.75) node {\footnotesize 1};
\draw (0.5,0)--(1,0);
\draw (0.75,0.25) node {\footnotesize 2};
\draw (1.5,0) circle (0.5);
\draw [ball color=white] (1.5,0.5) circle (0.55mm);
\draw (1.5,-0.75) node {\footnotesize 2};
\draw (2,0)--(2.5,0);
\draw (2.25,0.25) node {\footnotesize 3};
\draw (3,0) circle (0.5);
\draw (3,-0.75) node {\footnotesize 2};
\draw (3.5,0)--(4,0);
\draw (3.75,0.25) node {\footnotesize 3};
\draw (4.5,0) circle (0.5);
\draw [ball color=black] (4.5,0.5) circle (0.55mm);
\draw (4.5,-0.75) node {\footnotesize 2};
\draw (5,0)--(5.5,0);
\draw (5.25,0.25) node {\footnotesize 2};
\draw (6,0) circle (0.5);
\draw [ball color=black] (5.64,0.36) circle (0.55mm);
\draw [ball color=black] (6.5,0) circle (0.55mm);
\draw (6,-0.75) node {\footnotesize 1};

\end{tikzpicture}
\caption{A trigonal chain of cycles of genus 5, together with its $g^1_3$.}
\label{Fig:Trigonal}
\end{figure}

The canonical divisor class is the unique divisor class $K \in \mathbf{T}(t)$, where $t$ is the tableau pictured in Figure~\ref{Fig:Canonical}.

\begin{figure}[!h]
\begin{tikzpicture}
\matrix[column sep=0.7cm, row sep = 0.7cm] {
\begin{scope}[node distance=0 cm,outer sep = 0pt]
	      \node[bsq] (11) at (2.5,1) {1};
	      \node[bsq] (12) [right = of 11] {2};
	      \node[bsq] (13) [right = of 12] {3};
	      \node[bsq] (14) [right = of 13] {4};
	      \node[bsq] (15) [right = of 14] {5};

\end{scope}
\\};
\end{tikzpicture}
\caption{The tableau $t$ corresponding to the canonical divisor.}
\label{Fig:Canonical}
\end{figure}
\noindent
In this case, the map $\Psi : \Gamma \to \PP^4_{\trop}$ defined in Section~\ref{Sec:GenericLift} is superabundant in the sense of Definition~\ref{def: superabundant}.  In particular, the image of the central cycle $\gamma_3$ only spans a plane in $\mathbb{R}^4$.  To see this, note that the bend locus of $\Psi$ on this cycle is supported at the three points $\langle -1 \rangle_3 , \langle 0 \rangle_3, \langle 1 \rangle_3$, and the linear span of 3 points is at most 2-dimensional.  For this reason, the argument of Section~\ref{Sec:GenericLift} does not apply, and we cannot show directly that the map $\Psi$ lifts.

To overcome the difficulty in showing that $\Psi$ lifts, we use the fact that the canonical embedding of a trigonal curve factors through a map to a rational normal surface scroll.  Concretely, any three points whose sum is in the class of the $g^1_3$ are collinear in the canonical embedding, and the union of these lines over all such divisors is the image of the scroll.  Instead of lifting the map $\Psi$ above, we will construct a map from $\Gamma$ to the tropicalization of this scroll, and show that this map lifts.  In the case of a trigonal curve of genus 5, the scroll in question is the first Hirzebruch surface $\mathbb{F}_1$.

To construct a map to $\mathbb{F}_1$, we consider the divisors $K(-1) = K-E$ and $K(-2) = K-2E$, whose corresponding tableaux $t(-1)$ and $t(-2)$ are depicted in Figure \ref{Fig:g25}.  Note that $t(-1)$ has 2 fewer columns and 1 more row than $t$.  The entries in the second row agree with those in the first row of $t$, shifted by a knight's move.  Similarly, $t(-2)$ has 2 fewer columns and 1 more row than $t(-1)$.  The entry in the last row agrees with that in the last row of $t(-1)$, shifted by a knight's move.

\begin{figure}[!h]
\begin{tikzpicture}
\matrix[column sep=0.7cm, row sep = 0.7cm] {
\begin{scope}[node distance=0 cm,outer sep = 0pt]
	      \node[bsq] (11) at (2.5,1) {1};
	      \node[bsq] (12) [right = of 11] {2};
	      \node[bsq] (13) [right = of 12] {3};
	      \node[bsq] (21) [below = of 11] {3};
	      \node[bsq] (22) [right = of 21] {4};
	      \node[bsq] (23) [right = of 22] {5};

	      \node[bsq] (11a) at (7.5,1) {1};
	      \node[bsq] (21a) [below = of 11a] {3};
	      \node[bsq] (31a) [below = of 21a] {5};

\end{scope}
\\};
\end{tikzpicture}
\caption{The tableaux $t(-1)$ and $t(-2)$ corresponding to $K(-1)$ and $K(-2)$.}
\label{Fig:g25}
\end{figure}

The divisor $K(-2)$ is rigid.  Its restriction to $\gamma_1 \cup \gamma_2$ is equal to that of $K_0$, and its restriction to $\gamma_3 \cup \gamma_4 \cup \gamma_5$ is equal to that of $K_4$.  Similarly, the divisor $K(-1)$ has rank 2, and the restriction of $K(-1)_i$ to $\gamma_1 \cup \gamma_2$ is equal to that of $K_i$, while its restriction to $\gamma_3 \cup \gamma_4 \cup \gamma_5$ is equal to that of $K_{i+2}$.  We let $\psi_i$ be a piecewise linear function such that $\ddiv \psi_i = K(-1)_i - (K(-2) + E_1)$.  These divisors and functions are pictured in Figure~\ref{Fig:SerreDual}.  As before, the divisor $K(-2) + E_1$ is pictured in white, and the divisor $K(-1)_i$ is pictured in black.

\begin{figure}[h!]
\begin{tikzpicture}

\draw (-2,0) node {$K(-2) + E_1$};
\draw (0,0) circle (0.5);
\draw [ball color=white] (-0.5,0) circle (0.55mm);
\draw [ball color=white] (0.36,0.36) circle (0.55mm);
\draw (0.5,0)--(1,0);
\draw (1.5,0) circle (0.5);
\draw [ball color=white] (1.5,0.5) circle (0.55mm);
\draw [ball color=white] (1.14,0.36) circle (0.55mm);
\draw (2,0)--(2.5,0);
\draw (3,0) circle (0.5);
\draw (3.5,0)--(4,0);
\draw (4.5,0) circle (0.5);
\draw [ball color=white] (4.86,0.36) circle (0.55mm);
\draw (5,0)--(5.5,0);
\draw (6,0) circle (0.5);

\draw (-2,-2) node {$K(-1)_0$};
\draw (0,-2) circle (0.5);
\draw [ball color=white] (-0.5,-2) circle (0.55mm);
\draw [ball color=white] (0.36,-1.64) circle (0.55mm);
\draw (0,-2.75) node {\footnotesize 1};
\draw (0.5,-2)--(1,-2);
\draw (0.75,-1.75) node {\footnotesize 2};
\draw (1.5,-2) circle (0.5);
\draw [ball color=white] (1.5,-1.5) circle (0.55mm);
\draw [ball color=black] (1.14,-1.64) circle (0.55mm);
\draw (1.5,-2.75) node {\footnotesize 2};
\draw (2,-2)--(2.5,-2);
\draw (2.25,-1.75) node {\footnotesize 3};
\draw (3,-2) circle (0.5);
\draw (3,-2.75) node {\footnotesize 2};
\draw (3.5,-2)--(4,-2);
\draw (3.75,-1.75) node {\footnotesize 3};
\draw (4.5,-2) circle (0.5);
\draw [ball color=white] (4.86,-1.64) circle (0.55mm);
\draw [ball color=black] (4.14,-1.64) circle (0.55mm);
\draw (4.5,-2.75) node {\footnotesize 2};
\draw (5,-2)--(5.5,-2);
\draw (5.25,-1.75) node {\footnotesize 3};
\draw (6,-2) circle (0.5);
\draw [ball color=black] (6,-1.5) circle (0.55mm);
\draw [ball color=black] (6.5,-2) circle (0.55mm);
\draw (6,-2.75) node {\footnotesize 2};
\draw (6.75,-2) node {\footnotesize 2};

\draw (-2,-4) node {$K(-1)_1$};
\draw (0,-4) circle (0.5);
\draw [ball color=black] (-0.5,-4) circle (0.55mm);
\draw [ball color=black] (0.36,-3.64) circle (0.55mm);
\draw (0,-4.75) node {\footnotesize 0};
\draw (0.5,-4)--(1,-4);
\draw (0.75,-3.75) node {\footnotesize 0};
\draw (1.5,-4) circle (0.5);
\draw [ball color=white] (1.5,-3.5) circle (0.55mm);
\draw [ball color=white] (1.14,-3.64) circle (0.55mm);
\draw (1.5,-4.75) node {\footnotesize 1};
\draw (2,-4)--(2.5,-4);
\draw (2.25,-3.75) node {\footnotesize 2};
\draw (3,-4) circle (0.5);
\draw [ball color=black] (3,-3.5) circle (0.55mm);
\draw (3,-4.75) node {\footnotesize 1};
\draw (3.5,-4)--(4,-4);
\draw (3.75,-3.75) node {\footnotesize 1};
\draw (4.5,-4) circle (0.5);
\draw [ball color=white] (4.86,-3.64) circle (0.55mm);
\draw (4.5,-4.75) node {\footnotesize 1};
\draw (5,-4)--(5.5,-4);
\draw (5.25,-3.75) node {\footnotesize 2};
\draw (6,-4) circle (0.5);
\draw [ball color=black] (5.64,-3.64) circle (0.55mm);
\draw [ball color=black] (6.5,-4) circle (0.55mm);
\draw (6,-4.75) node {\footnotesize 1};

\draw (-2,-6) node {$K(-1)_2$};
\draw (-0.75,-6) node {\footnotesize 2};
\draw (0,-6) circle (0.5);
\draw [ball color=black] (-0.5,-6) circle (0.55mm);
\draw [ball color=white] (0.36,-5.64) circle (0.55mm);
\draw [ball color=black] (0,-5.5) circle (0.55mm);
\draw (0,-6.75) node {\footnotesize -1};
\draw (0.5,-6)--(1,-6);
\draw (0.75,-5.75) node {\footnotesize -1};
\draw (1.5,-6) circle (0.5);
\draw [ball color=white] (1.5,-5.5) circle (0.55mm);
\draw [ball color=white] (1.14,-5.64) circle (0.55mm);
\draw [ball color=black] (1.86,-5.64) circle (0.55mm);
\draw (1.5,-6.75) node {\footnotesize 0};
\draw (2,-6)--(2.5,-6);
\draw (2.25,-5.75) node {\footnotesize 0};
\draw (3,-6) circle (0.5);
\draw (3,-6.75) node {\footnotesize 0};
\draw (3.5,-6)--(4,-6);
\draw (3.75,-5.75) node {\footnotesize 0};
\draw (4.5,-6) circle (0.5);
\draw [ball color=black] (4.86,-5.64) circle (0.55mm);
\draw (4.5,-6.75) node {\footnotesize 0};
\draw (5,-6)--(5.5,-6);
\draw (5.25,-5.75) node {\footnotesize 0};
\draw (6,-6) circle (0.5);
\draw (6,-6.75) node {\footnotesize 0};

\end{tikzpicture}
\caption{Divisors in the linear system $K_{\Gamma} - g^1_3$.}
\label{Fig:SerreDual}
\end{figure}

We now define a function $\Psi : \Gamma \to \mathbb{F}_1^{\trop}$ by $\Psi (p) = (\varphi_0 (p), \psi_1 (p))$.  The image of this map is pictured in Figure~\ref{Fig:MapToScroll}.

\begin{figure}[!h]
\begin{tikzpicture}
\matrix[column sep=0.7cm, row sep = 0.7cm] {
\begin{scope}[node distance=0 cm,outer sep = 0pt]

          \draw (0.5,0)--(1,0);
          \draw (0.5,0)--(-3.5,-4);
          \draw (0.5,0)--(0.5,8);
          \draw (0,8) node {\footnotesize 2};
          \draw (-3.5,-3.5) node {\footnotesize 2};

          \draw (1,0)--(2,0);

          \draw (2,0)--(4,1);
          \draw (2,-1)--(4,1);
          \draw (2,0)--(2,-1);
          \draw (2,-1)--(1.5,-2);
          \draw (1.5,-2)--(1.5,-4);
          \draw (1.5,-2)--(-0.5,-4);

          \draw (4,1)--(5.5,2);

          \draw (5.5,2)--(7.5,3);
          \draw (5.5,2)--(6.5,3);
          \draw (6.5,3)--(7.5,3);
          \draw (6.5,3)--(6.5,8);

          \draw (7.5,3)--(9,3.5);

          \draw (9,3.5)--(10,4.5);
          \draw (9,3.5)--(10,3.5);
          \draw (10,3.5)--(10,4.5);
          \draw (10,3.5)--(10.5,3);
          \draw (10.5,3)--(10.5,-4);
          \draw (10.5,3)--(13,3);

          \draw (10,4.5)--(11,5.5);

          \draw (11,5.5)--(11.5,6);
          \draw (11.5,6)--(13,6);
          \draw (11.5,6)--(11.5,8);
          \draw (12,8) node {\footnotesize 2};
          \draw (13,6.5) node {\footnotesize 2};

\end{scope}
\\};
\end{tikzpicture}
\caption{The image of $\Gamma$ in $\mathbb{F}_1^{\trop}$. The map is manifestly superabundant, since the image has genus $3$ and $\Gamma$ is a chain of $5$ cycles.}
\label{Fig:MapToScroll}
\end{figure}

The map $\Psi$ is still superabundant -- it maps the first and last cycles $\gamma_1$ and $\gamma_5$ to line segments.  In this situation, we may nevertheless apply the results of Section~\ref{Sec:Def} to show that the map lifts provided that a certain combinatorial condition \textit{(naive well-spacedness)} holds on the edge lengths of the trees attached to $\Gamma$.  We conclude that there exists a curve $C$ of genus 5 over a non-archimedean field and a map $F : C \to \mathbb{F}_1$ specializing to $\Psi$.  Taking $D$ to be the preimage on $C$ of the $(-1)$-curve on $\mathbb{F}_1$, we see that $D$ is effective, and since $D$ specializes to $K(-2)$, it has rank exactly 0.  The divisor $D+g^1_3$ is then the pullback of the hyperplane class from the map $\mathbb{F}_1 \to \PP^2$ that contracts the $(-1)$-curve.  From Figure~\ref{Fig:MapToScroll}, we see that the image of this map is not contained in a line, and $D+g^1_3$ therefore has rank at least 2. By the basepoint free pencil trick, it follows that $D+2g^1_3$ has rank at least 4.

\section{Maps to scrolls}
\label{Sec:Scrolls}

We now attempt to generalize the argument of Section~\ref{Sec:GenericLift} to special chains of cycles.  As seen in Section~\ref{Sec:Genus5}, the argument does not generalize directly, for the following reason:  if the symbol $j$ appears in the tableau and the $j^{\textnormal{th}}$ torsion order $\mu_j$ is nonzero, then the image of the $j^{\textnormal{th}}$ cycle under the map $\Psi$ spans a linear space of dimension at most $\mu_j-1$.  As such, if $r \geq \mu_j$, then Theorem~\ref{unobstructed-lifting} does not apply.  Our main observation is that, if a general curve of fixed gonality admits a divisor of high rank, then the corresponding map to $\PP^r$ factors through a map to a lower dimensional scroll.  Rather than constructing a map from $\Gamma$ to tropical projective space, we instead construct a map to a tropical scroll.  In this section, we describe the basic theory of maps to scrolls.

Fix integers $a \geq 1$, $b \geq 0$.  To simplify notation, we will write $\mathbb{S}(a,b)$ for the balanced scroll
\[
\mathbb{S}(a,b) := \PP ( \cO_{\PP^1}^{\oplus a} \oplus \cO_{\PP^1}(1)^{\oplus b} ) .
\]
We write $\pi : \mathbb{S}(a,b) \to \PP^1$ for the natural map to $\PP^1$, and $n = a+b$ for the dimension of $\mathbb{S}(a,b)$.  The scroll $\mathbb{S}(a,b)$ is a toric variety, and its fan $\Sigma (a,b)$ is described in \cite[Example~7.3.5]{CoxLittleSchenk}.  We recall this description.  In $\mathbb{R}^n$, we let $u_1$ denote the first coordinate vector and $e_1 , \ldots , e_{n-1}$ the remaining coordinate vectors.  We let $e_0 = - \sum_{i=1}^{n-1} e_i$ and $u_0 = -u_1 -\sum_{i=b}^{n-1} e_i$.  The maximal cones of $\Sigma (a,b)$ are the $2n$ cones spanned by one of the two vectors $u_0, u_1$ and all but one of the vectors $e_0, \ldots , e_{n-1}$.\footnote{In the notation of \cite{CoxLittleSchenk}, the fan described here is actually that of the projectivization of the dual vector bundle.  We prefer our projectivizations to parameterize one-dimensional quotients, rather than one-dimensional subspaces.}

For the sake of clarity, we describe the degenerate cases.  If $b=0$, then the scroll $S(a,0)$ is a product of projective spaces, and the vector $u_0$ is simply $-u_1$.  If both $b=0$ and $a=1$, then the scroll $S(1,0)$ is just $\PP^1$, and the only rays of the fan are $u_1$ and $u_0 = -u_1$.

As shown by Cox, every smooth toric variety $X_{\Sigma}$ can be viewed as a fine moduli space for collections of line bundles and sections with trivialization data.

\begin{definition}
For any scheme $Y$, a \textit{$\Sigma$-collection} on $Y$ is the data:
\begin{enumerate}
\item  for each $\rho \in \Sigma (1)$, a line bundle $L_{\rho}$ on $Y$ and a distinguished section $u_{\rho} \in H^0 (Y, L_{\rho})$.
\item  for each $m \in M$, a trivialization $\varphi_m : \bigotimes_{\rho \in \Sigma (1)} L_{\rho}^{\langle \rho , m \rangle} \to \cO_Y$ such that $\varphi_{m+m'} = \varphi_m \otimes \varphi_{m'}$ and, for each $y \in Y$, there exists a maximal cone $\sigma$ such that $u_{\rho} (y) \neq 0$ for all $\rho \nsubseteq \sigma$.
\end{enumerate}
\end{definition}

Families of such collections are defined in the natural manner, and the scheme $X_{\Sigma}$ is a fine moduli space for isomorphism classes of $\Sigma$-collections.  In other words, a map $Y \to X_{\Sigma}$ is given by the same data as a $\Sigma$-collection on $Y$.

In the case where $C$ is a curve and $\Sigma = \Sigma (a,b)$ is the fan of a balanced scroll, a $\Sigma$-collection on $C$ simplifies to the following data:
\begin{enumerate}
\item  Two line bundles $D$ and $E$;
\item  Two sections $s_0 , s_1 \in H^0 (C,E)$ that do not simultaneously vanish at any point.
\item  A collection of $b$ sections $f_0 , \ldots , f_{b-1} \in H^0(C,D-E)$ and a collection of $a$ sections $f_b , \ldots , f_{n-1} \in H^0(C,D)$ that do not simultaneously vanish at any point.
\end{enumerate}
We say that a map $F : C \to \mathbb{S}(a,b)$ is \emph{nondegenerate} if it is nondegenerate under its composition with the natural embedding of $\mathbb{S}(a,b)$ in $\PP^{n+b-1}$ given by $\cO_{\mathbb{S}(a,b)} (1) \otimes \pi^\star \cO_{\PP^1} (1)$.  Equivalently, the map $F$ is nondegenerate if the sections
\[ s_0 f_0 , \ldots , s_0 f_{b-1} , s_1 f_0 , \ldots s_1 f_{b-1} , f_b , \ldots , f_{n-1} \in H^0(C,D) \]
are linearly independent.

\begin{proposition}
\label{Prop:MapToScroll}
Let $C$ be a curve of gonality $k$, and let $E$ denote a divisor of degree $k$ and rank 1.  If $C$ admits a nondegenerate map to $\mathbb{S}(a,b)$ with $\pi^\star \cO_{\PP^1} (1) = E$, then there exists a divisor $D$ on $C$ of rank at least $n+b-1$ such that $D-E$ has rank at least $b-1$.
\end{proposition}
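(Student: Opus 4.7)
The plan is to unpack the \(\Sigma\)-collection data associated with the map \(F \colon C \to \mathbb{S}(a,b)\) and then read off the claimed rank lower bounds directly from the global sections produced by pulling back the tautological line bundles.

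First, I would fix notation to match the description of a \(\Sigma(a,b)\)-collection given just before the proposition: set \(D := F^\star \cO_{\mathbb{S}(a,b)}(1)\), so that by hypothesis \(E = F^\star \pi^\star \cO_{\PP^1}(1)\). The rays of \(\Sigma(a,b)\) give line bundles \(L_\rho\) together with distinguished sections, and one reads off that \(F\) determines sections \(s_0, s_1 \in H^0(C,E)\) with no common zero, together with sections \(f_0,\ldots,f_{b-1} \in H^0(C, D-E)\) and \(f_b,\ldots,f_{n-1} \in H^0(C,D)\) with no common zero. Under the embedding \(\mathbb{S}(a,b) \hookrightarrow \PP^{n+b-1}\) defined by \(\cO_{\mathbb{S}(a,b)}(1) \otimes \pi^\star \cO_{\PP^1}(1)\), the pulled-back hyperplane sections of \(D\) are exactly
\[
s_0 f_0,\ \ldots,\ s_0 f_{b-1},\ s_1 f_0,\ \ldots,\ s_1 f_{b-1},\ f_b,\ \ldots,\ f_{n-1} \in H^0(C,D).
\]

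Next, I would invoke the nondegeneracy hypothesis, which by the definition recalled in the paragraph preceding the proposition says exactly that these \(2b + a = n+b\) sections of \(D\) are linearly independent. Hence \(h^0(C,D) \geq n+b\), so the rank of \(D\) is at least \(n+b-1\), yielding the first conclusion.

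For the second conclusion, I would show that the \(b\) sections \(f_0,\ldots,f_{b-1} \in H^0(C, D-E)\) are linearly independent. Indeed, any nontrivial relation \(\sum \lambda_i f_i = 0\) would, upon multiplication by \(s_0\), produce a nontrivial relation \(\sum \lambda_i (s_0 f_i) = 0\) among the displayed sections of \(D\), contradicting their linear independence. Consequently \(h^0(C, D-E) \geq b\), so \(D - E\) has rank at least \(b-1\), completing the proof.

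Because the argument is essentially a direct translation of the \(\Sigma\)-collection data into sections of \(D\) and \(D-E\), there is no serious obstacle; the only point requiring care is verifying that the nondegeneracy condition, phrased in terms of the embedding of \(\mathbb{S}(a,b)\) into projective space, immediately yields linear independence of both the full list of \(n+b\) sections and of the sub-list \(\{f_0,\ldots,f_{b-1}\}\) used for \(D - E\).
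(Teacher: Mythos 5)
Your proposal is correct and follows the same route as the paper's own (much terser) proof: both simply read off the $n+b$ linearly independent sections of $D$ and the $b$ linearly independent sections of $D-E$ from the $\Sigma(a,b)$-collection data and the nondegeneracy hypothesis. The only addition is your explicit verification that independence of the $f_i$ follows from independence of the $s_0 f_i$, which the paper leaves implicit.
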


\begin{proof}
Suppose that $C$ admits a nondegenerate map to a scroll $\mathbb{S}(a,b)$.  Then, by the description above, there exist $n+b$ linearly independent sections in $H^0 (C,D)$ and $b$ linearly independent sections in $H^0 (C,D-E)$, so $D$ has rank at least $n+b-1$ and $D-E$ has rank at least $b-1$.
\end{proof}

The following proposition is a partial converse to Proposition~\ref{Prop:MapToScroll}.

\begin{proposition}
\label{Prop:PartConverse}
Let $C$ be a curve of gonality $k$, and let $E$ denote a divisor of degree $k$ and rank 1.  The curve $C$ admits a nondegenerate map to $\mathbb{S}(a,b)$ with $\pi^\star \cO_{\PP^1} (1) = E$ if there exists a divisor $D$ on $C$ with the following properties:
\begin{enumerate}
\item  $r(D) \geq n+b-1$;
\item  $r(D-E) = b-1$;
\item  $r(D-2E) = -1$.
\end{enumerate}
\end{proposition}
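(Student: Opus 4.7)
\bigskip
\noindent\textbf{Proof plan for Proposition~\ref{Prop:PartConverse}.} The plan is to directly construct a $\Sigma(a,b)$-collection on $C$ from the divisor $D$. First, I would observe that $E$ must be base-point free: since $C$ has gonality exactly $k$ and $\deg E = k$, a base point of $E$ would yield a $g^1_{k-1}$. Thus any basis $s_0,s_1$ of the $2$-dimensional space $H^0(C,E)$ has no common vanishing on $C$, giving the data along the $u$-rays of $\Sigma(a,b)$.

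Next, I would choose $b$ linearly independent sections $f_0,\ldots,f_{b-1}\in H^0(C,D-E)$, which is possible because $r(D-E)=b-1$ means $h^0(D-E)=b$. The key input is the base-point-free pencil trick applied to $E$: the kernel of the multiplication map
\[
\mu : H^0(C,E)\otimes H^0(C,D-E)\longrightarrow H^0(C,D)
\]
is isomorphic to $H^0(C,D-2E)$, which vanishes by hypothesis (3). Hence $\mu$ is injective and the $2b$ products $\{s_i f_j\}$ are linearly independent in $H^0(C,D)$, spanning a subspace $V\subset H^0(C,D)$ of dimension $2b$. Because $h^0(D)\geq n+b=2b+a$, I can extend $V$ to an $(n+b)$-dimensional subspace by choosing $a$ additional sections $f_b,\ldots,f_{n-1}\in H^0(C,D)$; this gives nondegeneracy.

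The main obstacle is ensuring that $f_0,\ldots,f_{n-1}$ have no common zero on $C$, since a common zero would be a point lying in $\mathrm{Bs}(D-E)\cap\mathrm{Bs}(D)$. To handle this, I would replace $D$ at the outset by a divisor of minimal degree satisfying the (slightly relaxed) hypotheses $h^0(D)\geq n+b$, $h^0(D-E)\geq b$, and $h^0(D-2E)=0$. If $p\in\mathrm{Bs}(D)\cap\mathrm{Bs}(D-E)$, then $D':=D-p$ would satisfy $h^0(D')=h^0(D)$, $h^0(D'-E)=h^0(D-E)$, and $h^0(D'-2E)\leq h^0(D-2E)=0$, contradicting minimality. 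Thus $\mathrm{Bs}(D)\cap\mathrm{Bs}(D-E)=\emptyset$, so for every $p\in\mathrm{Bs}(D-E)$ there exists a section of $D$ nonvanishing at $p$; a generic choice of complementary $f_b,\ldots,f_{n-1}$ therefore simultaneously achieves linear independence and avoids a common zero at every point of the finite set $\mathrm{Bs}(D-E)$.

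Finally, I would assemble the line bundles $L_{u_0}=L_{u_1}=E$, $L_{e_0}=\cdots=L_{e_{b-1}}=D-E$, $L_{e_b}=\cdots=L_{e_{n-1}}=D$, together with the sections $s_0,s_1$ and $f_0,\ldots,f_{n-1}$ and the tautological trivializations, into a $\Sigma(a,b)$-collection on $C$ in the sense of Cox. This produces a morphism $F:C\to\mathbb{S}(a,b)$, and the construction shows that $F^{\star}\pi^{\star}\cO_{\PP^1}(1)=E$ while the nondegeneracy condition on the $n+b$ sections $s_0 f_0,\ldots,s_1 f_{b-1},f_b,\ldots,f_{n-1}\in H^0(C,D)$ is built in by step two and step three of the construction.
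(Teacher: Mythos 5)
Your proof follows essentially the same route as the paper's: the base-point-free pencil trick applied to $E$ shows the multiplication map $H^0(C,E)\otimes H^0(C,D-E)\to H^0(C,D)$ is injective, and one extends the resulting $2b$ independent products $s_if_j$ by $a$ further sections of $D$ to obtain a nondegenerate $\Sigma(a,b)$-collection. Your additional care with base loci --- base-point-freeness of $E$ via the gonality hypothesis, and the minimal-degree argument giving $\mathrm{Bs}(D)\cap\mathrm{Bs}(D-E)=\emptyset$ so that the chosen sections have no common zero --- correctly handles the non-vanishing conditions in the definition of a $\Sigma$-collection, which the paper's terser proof leaves implicit.
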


\begin{proof}
Let $s_0 , s_1$ be a basis for $H^0 (C,E)$, and let $f_0 , \ldots , f_{b-1}$ be a basis for $H^0 (C,D-E)$.  By the basepoint free pencil trick, we have an exact sequence
\[
0 = H^0 (C,D-2E) \to H^0 (C,E) \otimes H^0 (C,D-E) \to H^0 (C,D) .
\]
It follows that the righthand map is injective, so the functions $s_i f_j \in H^0 (C,D)$ are linearly independent.  Letting $f_b , \ldots , f_{n-1} \in H^0 (C,D)$ be an extension of this linearly independent set, we obtain a nondegenerate $\Sigma (a,b)$-collection on $C$.  The result follows.
\end{proof}

Geometrically, this is very similar to the case of trigonal curves described in Section~\ref{Sec:Genus5}.  If we consider the image of $C$ in $\PP^{n+b-1}$ under the (possibly incomplete) linear series in $\vert D \vert$ described above, then any $k$ points whose sum is in the class of $E$ span a projective space of dimension $n-1$.  The union of these projective spaces is the image of the scroll $\mathbb{S}(a,b)$.

In the situation of Proposition~\ref{Prop:PartConverse}, we can obtain divisors of large rank simply by adding multiples of the divisor $E$.

\begin{corollary}
\label{Cor:AllM}
Let $C, D, E$ be as in Proposition~\ref{Prop:PartConverse}.  Then $h^0 (D+mE) \geq (m+1)n+b-1$ for all $m\geq 0$.
\end{corollary}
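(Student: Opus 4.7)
The plan is to run the basepoint free pencil trick iteratively, just as in the proof of Proposition~\ref{Prop:PartConverse}, and convert the resulting second-order inequality on the sequence $a_m := h^0(D+mE)$ into the claimed linear lower bound.

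First, I would verify that $E$ is basepoint free. Since $E$ has rank $1$ and its degree equals the gonality $k$, the existence of a base point $p$ would force $r(E-p) \geq 1$ with $\deg(E-p) = k-1 < k$, contradicting the definition of gonality. Hence the basepoint free pencil trick applies to $E$ twisted by any line bundle $L$, giving the exact sequence
\[
0 \to H^0(C, L-E) \to H^0(C,E) \otimes H^0(C,L) \to H^0(C, L+E),
\]
and thus the inequality $h^0(L+E) \geq 2 h^0(L) - h^0(L-E)$. Applying this with $L = D + mE$ yields
\[
a_{m+1} - a_m \;\geq\; a_m - a_{m-1},
\]
so the successive differences of the sequence $\{a_m\}$ are non-decreasing in $m$.

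Next, I would pin down the initial difference. The hypotheses $r(D-E) = b-1$ and $r(D-2E) = -1$ give $a_{-1} = b$ exactly, while $r(D) \geq n+b-1$ gives $a_0 \geq n+b$. Therefore
\[
a_0 - a_{-1} \;\geq\; (n+b) - b \;=\; n.
\]
By the monotonicity of differences established above, $a_{m+1} - a_m \geq n$ for all $m \geq -1$, and telescoping from $a_0$ gives
\[
a_m \;\geq\; a_0 + m n \;\geq\; (m+1)n + b
\]
for all $m \geq 0$, which is exactly the claim.

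There is no real obstacle here: the argument is essentially formal once one has the basepoint free pencil trick and the two given rank equalities. The only subtle point is to remember that the inequality $a_{m+1} \geq 2a_m - a_{m-1}$ refers to actual values (not merely lower bounds), so it must be reorganized as a statement about non-decreasing differences before being combined with the lower bound $a_0 \geq n+b$ and the \emph{exact} value $a_{-1} = b$. The latter is what prevents the telescoping argument from losing the $+b$ correction term.
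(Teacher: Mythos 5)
Your proof is correct and is essentially the paper's argument: both run the basepoint free pencil trick on $L = D+mE$ to get $h^0(D+mE) \geq 2h^0(D+(m-1)E) - h^0(D+(m-2)E)$, deduce that the successive differences are at least $n$ (the paper phrases this as an induction, you as monotonicity of differences plus the initial difference $h^0(D) - h^0(D-E) \geq n$ coming from $h^0(D) \geq n+b$ and the exact value $h^0(D-E) = b$), and telescope. Your explicit check that $E$ is basepoint free is a nice addition the paper leaves implicit.
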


\begin{proof}
We prove, by induction on $m$, that
\[
h^0 (C,D+mE) \geq h^0 (C,D+(m-1)E) + n.
\]
The case $m=0$ is given.  By the basepoint free pencil trick, we have the exact sequence
\[
0 \to H^0 (C,D+(m-2)E) \to H^0 (C,E) \otimes H^0 (C,D+(m-1)E) \to H^0 (C,D+mE) .
\]
Thus,
\begin{align*}
h^0 (C,D+mE) & \geq 2h^0 (C,D+(m-1)E) - h^0 (C,D+(m-2)E) \\
& \geq h^0 (X,D+(m-1)E) + n,
\end{align*}
where the second inequality holds by the inductive hypothesis.
\end{proof}

\section{Scrollar tableaux}
\label{Sec:ScrollarTableaux}

In this section and in Section~\ref{Sec:SpecialLift}, we fix an integer $k \geq 2$ and consider a chain of cycles $\Gamma$ with torsion profile
\[
\mu_i = \left\{ \begin{array}{ll}
0 & \textrm{if $i \leq k-1$ or $i \geq g-k+2$,}\\
k & \textrm{otherwise.}
\end{array} \right.
\]
In this section, we analyze in detail the combinatorics of certain divisors on $\Gamma$.  There are many reasons to think that $\Gamma$ should behave like a general curve of gonality $k$.  Note in particular that the moduli space of such graphs has dimension $2g-5+2k$, equal to that of the $k$-gonal locus in $\cM_g$, and each admits a harmonic degree $k$ morphism to $\RR$ with vanishing Riemann-Hurwitz obstruction~\cite{ABBR,Pfl16b}.  Throughout, we write $E = kv_k$, which is a divisor on $\Gamma$ of degree $k$ and rank 1.

As in Section~\ref{Sec:TheGraph}, we define the divisor $E_i$ to be the unique divisor equivalent to $E$ such that $E_i - iv_1 - (1-i)w_g$ is effective.  So
\[
E_1 = v_1 + \sum_{j=1}^{k-1} \langle j \rangle_j
\]
and
\[
E_0 = w_g + \sum_{j=g-k+2}^g \langle j-(g+2) \rangle_j .
\]
We let $\varphi_i$ be a piecewise linear function such that $\ddiv \varphi_i = E_i - E_1$.

Given this torsion profile $\vec\mu$, we consider $\vec\mu$-displacement tableaux of the following form.

\begin{definition}
Fix two integers $a>0, b \geq 0$ such that $n=a+b<k$.  We define a tableau $t$ to be \textbf{scrollar} of type $(a,b)$ if
\begin{enumerate}
\item  $t(x,y) = t(x',y')$ if and only if there exists an integer $\ell$ such that both $x'-x = \ell n$ and $y'-y = \ell (n-k)$, and
\item  the number of columns is at least $n$ and is congruent to $b \pmod{n}$.
\end{enumerate}
\end{definition}

An example of such a tableau appears in Figure~\ref{Fig:Scrollar}.  Note that the boxes in the first $n$ columns necessarily contain distinct numbers, as do the boxes in the last $k-n$ rows.  The remaining boxes contain symbols that appear in this L-shaped region.

\begin{figure}[!h]
\begin{tikzpicture}
\matrix[column sep=0.7cm, row sep = 0.7cm] {
\begin{scope}[node distance=0 cm,outer sep = 0pt]
          \fill[fill=gray] (2.15,1.35) rectangle (4.25,-2.15);
          \fill[fill=gray] (4.25,-0.75) rectangle (7.75,-2.15);
	      \node[bsq] (11) at (2.5,1) {1};
	      \node[bsq] (12) [right = of 11] {2};
	      \node[bsq] (13) [right = of 12] {3};
	      \node[bsq] (14) [right = of 13] {7};
	      \node[bsq] (15) [right = of 14] {8};
	      \node[bsq] (16) [right = of 15] {9};
	      \node[bsq] (17) [right = of 16] {13};
	      \node[bsq] (18) [right = of 17] {14};
	      \node[bsq] (21) [below = of 11] {4};
	      \node[bsq] (22) [right = of 21] {5};
	      \node[bsq] (23) [right = of 22] {6};
	      \node[bsq] (24) [right = of 23] {10};
	      \node[bsq] (25) [right = of 24] {11};
	      \node[bsq] (26) [right = of 25] {12};
	      \node[bsq] (27) [right = of 26] {16};
	      \node[bsq] (28) [right = of 27] {17};
	      \node[bsq] (31) [below = of 21] {7};
	      \node[bsq] (32) [right = of 31] {8};
	      \node[bsq] (33) [right = of 32] {9};
	      \node[bsq] (34) [right = of 33] {13};
	      \node[bsq] (35) [right = of 34] {14};
	      \node[bsq] (36) [right = of 35] {15};
	      \node[bsq] (37) [right = of 36] {19};
	      \node[bsq] (38) [right = of 37] {20};
	      \node[bsq] (41) [below = of 31] {10};
	      \node[bsq] (42) [right = of 41] {11};
	      \node[bsq] (43) [right = of 42] {12};
	      \node[bsq] (44) [right = of 43] {16};
	      \node[bsq] (45) [right = of 44] {17};
	      \node[bsq] (46) [right = of 45] {18};
	      \node[bsq] (47) [right = of 46] {22};
	      \node[bsq] (48) [right = of 47] {23};
	      \node[bsq] (51) [below = of 41] {13};
	      \node[bsq] (52) [right = of 51] {14};
	      \node[bsq] (53) [right = of 52] {15};
	      \node[bsq] (54) [right = of 53] {19};
	      \node[bsq] (55) [right = of 54] {20};
	      \node[bsq] (56) [right = of 55] {21};
	      \node[bsq] (57) [right = of 56] {24};
	      \node[bsq] (58) [right = of 57] {25};

\end{scope}
\\};
\end{tikzpicture}
\caption{An example of a scrollar tableau $t$ of type $(1,2)$, with $k=5$.}
\label{Fig:Scrollar}
\end{figure}

If $t$ is a scrollar tableau of type $(a,b)$ with $r+1$ columns and $s$ rows, define $t(-1)$ to be the tableau with $r+1-n$ columns and $s+(k-n)$ rows defined as follows:
\[
t(-1)(x,y) = \left\{ \begin{array}{ll}
t(x,y) & \textrm{if $y<s$,}\\
t(x+n,y-(k-n)) & \textrm{if $y \geq s$.}
\end{array} \right.
\]
An example of such a tableau appears in Figure~\ref{Fig:Hat}.  Note that $t(-1)$ is the union of the first $r+1-n$ columns of $t$ and the last $r+1-n$ columns of $t$, translated to the left $n$ units and down $k-n$ units.  

\begin{figure}[!h]
\begin{tikzpicture}
\matrix[column sep=0.7cm, row sep = 0.7cm] {
\begin{scope}[node distance=0 cm,outer sep = 0pt]
          \fill[fill=gray] (2.15,1.35) rectangle (4.25,-3.55);
          \fill[fill=gray] (4.25,-2.15) rectangle (5.65,-3.55);
	      \node[bsq] (11) at (2.5,1) {1};
	      \node[bsq] (12) [right = of 11] {2};
	      \node[bsq] (13) [right = of 12] {3};
	      \node[bsq] (14) [right = of 13] {7};
	      \node[bsq] (15) [right = of 14] {8};
	      \node[bsq] (21) [below = of 11] {4};
	      \node[bsq] (22) [right = of 21] {5};
	      \node[bsq] (23) [right = of 22] {6};
	      \node[bsq] (24) [right = of 23] {10};
	      \node[bsq] (25) [right = of 24] {11};
	      \node[bsq] (31) [below = of 21] {7};
	      \node[bsq] (32) [right = of 31] {8};
	      \node[bsq] (33) [right = of 32] {9};
	      \node[bsq] (34) [right = of 33] {13};
	      \node[bsq] (35) [right = of 34] {14};
	      \node[bsq] (41) [below = of 31] {10};
	      \node[bsq] (42) [right = of 41] {11};
	      \node[bsq] (43) [right = of 42] {12};
	      \node[bsq] (44) [right = of 43] {16};
	      \node[bsq] (45) [right = of 44] {17};
	      \node[bsq] (51) [below = of 41] {13};
	      \node[bsq] (52) [right = of 51] {14};
	      \node[bsq] (53) [right = of 52] {15};
	      \node[bsq] (54) [right = of 53] {19};
	      \node[bsq] (55) [right = of 54] {20};
	      \node[bsq] (61) [below = of 51] {16};
	      \node[bsq] (62) [right = of 61] {17};
	      \node[bsq] (63) [right = of 62] {18};
	      \node[bsq] (64) [right = of 63] {22};
	      \node[bsq] (65) [right = of 64] {23};
	      \node[bsq] (71) [below = of 61] {19};
	      \node[bsq] (72) [right = of 71] {20};
	      \node[bsq] (73) [right = of 72] {21};
	      \node[bsq] (74) [right = of 73] {24};
	      \node[bsq] (75) [right = of 74] {25};

\end{scope}
\\};
\end{tikzpicture}
\caption{The tableau $t(-1)$, where $t$ is the tableau depicted in Figure~\ref{Fig:Scrollar}.}
\label{Fig:Hat}
\end{figure}

Our interest in these tableaux is illustrated by the following proposition.

\begin{proposition}
\label{Prop:HatT}
Let $t$ be a scrollar tableau of type $(a,b)$, and let $D \in \mathbf{T}(t)$.  Then $D-E \in \mathbf{T}(t(-1))$.  Moreover, for all $0\leq i\leq r-n$, $D-E$ is equivalent to the divisor $D(-1)_i$ that agrees with $D_i$ on $\bigcup_{j<k} \gamma_j$ and agrees with $D_{i+n}$ on $\bigcup_{j \geq k} \gamma_j \smallsetminus \{ v_k \}$.
\end{proposition}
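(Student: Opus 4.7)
The proof has two parts: showing (i) that $D-E\in\mathbf{T}(t(-1))$, and (ii) that the specific divisor $D(-1)_i$ described realizes this class as an effective divisor.

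For (i), I would first verify that $t(-1)$ is a $\vec{\mu}$-displacement tableau. Any repeated symbol $j$ in $t(-1)$ arises either from a repetition already present in $t$ (in which case the scrollar hypothesis on $t$ supplies a lattice distance that is a multiple of $k=\mu_j$), or from a pairing of a top-portion entry with a bottom-portion entry of $t(-1)$. Direct inspection shows the latter cases have lattice distance $k$ (if the underlying $t$-entries coincide) or $2k$ (if they are related by the scrollar shift $(n,n-k)$). I then compute $\tilde\xi_j(E)$ via linearity of $\tilde\xi_j$. Since $E=kv_k$, we have $\tilde\xi_j(E)=k\tilde\xi_j(v_k)$, and evaluating $\tilde\xi_j(v_k)$ by retracting $v_k$ (minus the basepoint for degree $1$) to each cycle $\gamma_j$ yields $\tilde\xi_j(v_k)=0$ for $j<k$ and $\tilde\xi_j(v_k)=-1$ for $j\geq k$. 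Hence $\tilde\xi_j(E)\in k\mathbb{Z}$, which forces $\xi_j(D-E)\equiv\xi_j(D)\pmod{k}$ for every $j$ in the torsion range $k\leq j\leq g-k+1$. Combined with the observation that the scrollar shift changes $y-x$ by $-k$, the defining congruences $\xi_j(D)\equiv y-x\pmod{k}$ at the corresponding positions in $t$ translate into the desired congruences for $t(-1)$.

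For (ii), I would verify $D-E\sim D(-1)_i$ by matching the classes in $\mathrm{Pic}(\Gamma)$. Since $D\sim D_i$, this reduces to checking
\[
\tilde\xi_j(D(-1)_i - D_i) \;=\; -\tilde\xi_j(E),
\]
i.e., $0$ for $j<k$ and $k$ for $j\geq k$. Writing
\[
D(-1)_i - D_i \;=\; (D_{i+n}-D_i)\big|_{\bigcup_{j\geq k}\gamma_j} \;-\; V\cdot v_k,
\]
where $V\in\{0,1\}$ records whether $D_{i+n}$ carries a chip at $v_k$, linearity reduces the identity to a cycle-by-cycle computation. The shifts $p_{j-1}(i+n)-p_{j-1}(i)$ of the lattice path encode exactly which symbols occupy column $i$ versus column $i+n$ of $t$, and the scrollar structure forces these shifts to balance—together with the contributions at $v_k$ and $w_g$—to the desired total. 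The excision $\setminus\{v_k\}$ simultaneously ensures effectivity and matches the degree count $\deg D(-1)_i = d-k$.

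The main obstacle is the delicate analysis at the boundary cycle $\gamma_k$: the chip of $D_{i+n}$ on $\gamma_k$ lies at $v_k$ precisely when $\xi_k-p_{k-1}(i+n)\equiv -1\pmod{k}$, and this contingency must be reconciled both with the scrollar combinatorics governing the overlap of columns $i$ and $i+n$ of $t$ and with the effectivity and degree of $D(-1)_i$. The scrollar hypothesis is exactly what makes this bookkeeping consistent, and carrying out this reconciliation in all cases is the technical heart of the verification.
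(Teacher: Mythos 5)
Your proposal follows essentially the same route as the paper: the first claim is proved there exactly as you describe, via the linearity of $\tilde\xi_j$, the computation that $\tilde\xi_j(E)$ is $0$ for $j<k$ and $\pm k$ for $j\geq k$ (the paper writes $\tilde\xi_j(D-kv_k)=\tilde\xi_j(D)-k$ where your convention gives $+k$; this is immaterial modulo the torsion order $k$, and your sign is the one consistent with the exact equalities required on the cycles $j\geq g-k+2$), combined with the fact that the scrollar shift changes $y-x$ by $-k$. For the ``moreover'' clause the paper is even terser than you are---it simply asserts that the identification of $D(-1)_i$ follows ``from this calculation''---so your plan to verify it coordinatewise via $\tilde\xi_j$, including the bookkeeping at $v_k$, is if anything more explicit than the published argument.
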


\begin{proof}
Suppose that $t$ has $r+1$ columns and $s$ rows. We alert the reader that $r+1$ is at least $n$ by the definition of a scrollar tableau.  Then the degree of any divisor in $\mathbf{T}(t(-1))$ is equal to
\[
g+(r-n)-(s+k-n) = (g+r-s)-k = \deg (D) - k.
\]
Next, recall that the function $\tilde{\xi}_j$ is a linear function on divisors.  It is easy to see that
\[
\tilde{\xi}_j (D-kv_k) = \left\{ \begin{array}{ll}
\tilde{\xi}_j (D) & \textrm{if $j<k$,}\\
\tilde{\xi}_j (D)-k & \textrm{if $j \geq k.$}
\end{array} \right.
\]
Now, if $k \leq j \leq g-k+1$, then $\langle \xi_j - k \rangle_j = \langle \xi_j \rangle_j$.  We therefore see that $D-E \in \mathbf{T}(t(-1))$. It follows that $D-E$ is equivalent to the divisor $D(-1)_i$ that agrees with $D_i$ on $\bigcup_{j<k} \gamma_j$ and agrees with $D_{i+n}$ on $\bigcup_{j \geq k} \gamma_j \smallsetminus \{ v_k \}$.
\end{proof}

Given a scrollar tableau $t$, we write $t(-m)$ for the tableau obtained by successively applying $m$ times the operation $t \mapsto t(-1)$.  For $D \in \mathbf{T}(t)$, we write $D(-m) = D-mE \in \mathbf{T}(t(-m))$.  By definition, there exists an integer $m = \lfloor \frac{r+1}{n} \rfloor$ such that $t(-m)$ is a standard Young tableau with $b$ columns.  Note that $m > 0$, since in the case where $t$ is a standard Young tableau, we have $n = a = r+1$.

\begin{corollary}
\label{Cor:Ranks}
Let $t$ be a scrollar tableau of type $(a,b)$ with $r+1$ columns, let $D \in \mathbf{T}(t)$ be sufficiently general, and let $m = \lfloor \frac{r+1}{n} \rfloor$.  Then $D(-m)$ is vertex avoiding.  In particular,
\begin{enumerate}
\item  $r(D(-m)) = b$;
\item  $r(D(-m-1)) = -1$.
\end{enumerate}
\end{corollary}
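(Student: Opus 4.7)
The plan is to iterate Proposition~\ref{Prop:HatT} to reduce to a vertex-avoiding situation on a standard Young tableau, then invoke Lemma~\ref{Lem:ExactRank} for the rank statements. The core issue is verifying vertex-avoidance after the iteration; the rank computations are then formal.

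First, applying Proposition~\ref{Prop:HatT} iteratively $m$ times gives $D(-m) \in \mathbf{T}(t(-m))$, where $t(-m)$ is the standard Young tableau with $b$ columns described immediately before the corollary. Since $b < n$, the operation $t \mapsto t(-1)$ cannot be performed once more.

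The main task is to show $D(-m)$ is vertex-avoiding with respect to $t(-m)$, which is where genericity of $D$ enters. The point is to track coordinate residues $\xi_j(D(-m)) \pmod{\mu_j}$ under iteration. For $j < k$ or $j > g-k+1$ we have $\mu_j = 0$, so the residue becomes an exact real equality, and the two vertex-avoiding conditions in Definition~\ref{Lem:ExactRank} amount to finitely many open real conditions that are satisfied on a dense open subset of $\mathbf{T}(t)$. For $k \leq j \leq g-k+1$ we have $\mu_j = k$; since $E = k v_k$ contributes an integral multiple of $k$ to $\tilde\xi_j$ for $j \geq k$ and nothing for $j < k$, the residues $\xi_j(D(-m)) \pmod k$ agree with those of $\xi_j(D)$. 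Thus the constraints on $D(-m)$ imposed by positions of $j$ in $t(-m)$ are consistent with those from $t$ where the tableaux overlap, and one can then verify the two vertex-avoiding conditions case-by-case on $(i,j)$ with $0 \leq i \leq b-1$ and $1 \leq j \leq g$, separating: (i) $j$ not appearing in $t$ (generic real freedom), (ii) $j$ appearing in both $t$ and $t(-m)$ (direct combinatorial check inside $t(-m)$), and (iii) $j$ appearing in $t$ but not in $t(-m)$ (the symbols stripped away by the iteration).

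With vertex-avoidance established, statement (1) follows from Lemma~\ref{Lem:ExactRank} applied to $t(-m)$. For statement (2), one argues that $D(-m-1) = D(-m) - E$ is not equivalent to any effective divisor. By Pflueger's classification $W^r_d(\Gamma) = \bigcup \mathbf{T}(t')$, a non-negative rank for $D(-m-1)$ would force $D(-m-1) \in \mathbf{T}(t')$ for some $\vec\mu$-displacement tableau $t'$ with at least one column. Using that $D(-m)$ is $v_1$-reduced away from $v_1$ (a consequence of vertex-avoiding, as in the proof of Lemma~\ref{Lem:ExactRank}), one can compute the $v_1$-reduced representative of $D(-m-1)$ via Dhar's burning algorithm; the calculation shows the result has negative degree at $v_1$, precluding effectivity.

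The main obstacle will be case (iii) in the verification of vertex-avoidance: for $j$ appearing in $t$ but excised from $t(-m)$ by the scrollar iteration, the residue $\xi_j$ is rigidly fixed by $t$, yet it must be checked against the lattice path $p^{t(-m)}$, which differs from $p^t$ in the columns that were collapsed. Here one has to carefully exploit the scrollar periodicity $t(x,y) = t(x+n, y-(k-n))$ to see that the shift $p^{t(-m)}_{j-1}(i) - p^t_{j-1}(i)$ compensates exactly for the move of $j$ from its original column to the corresponding column in $t(-m)$, ruling out any accidental congruence $\xi_j - p^{t(-m)}_{j-1}(i) \equiv -1 \pmod{\mu_j}$ or $\equiv 0 \pmod{\mu_j}$ outside the prescribed positions.
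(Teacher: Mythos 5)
Your outline has the right architecture --- iterate Proposition~\ref{Prop:HatT}, verify vertex-avoidance using genericity plus the scrollar periodicity, then read off the ranks --- but the two steps you identify as the crux are asserted rather than carried out, and in both places the paper has a concrete argument that your sketch does not reproduce. For vertex-avoidance, you correctly flag your case (iii) as the obstacle but leave it open. The paper closes it in one stroke by working with the lingering lattice path $\vec p$ of $t(-m+1)$ (which contains every symbol of $t$, so your case (iii) never arises): strict monotonicity $p_j(i) > p_j(i+1)$ together with the periodicity $p_j(i) = k + p_j(i+n)$ forced by the scrollar condition shows that the relevant values $p_j(i)$ are pairwise distinct modulo $k = \mu_j$, which immediately rules out both forbidden congruences except in the prescribed column. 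Note also that the vertex-avoidance actually needed later (to place the trees defining $\Psi$ in Section~\ref{Sec:SpecialLift}) involves the representatives $D(-m+1)_i$ for $b \leq i < n$ as well, so restricting the check to $0 \leq i \leq b-1$ against $t(-m)$, as you propose, would not suffice.

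For part (2), your plan to run Dhar's algorithm at $v_1$ on $D(-m) - E$ is a genuinely harder computation than the one the paper performs, and you do not execute it; the sentence ``the calculation shows the result has negative degree at $v_1$'' is exactly the content that needs proving, and since $E \sim E_1$ is spread across the first $k-1$ cycles, making $D(-m)_{b-1} - E_1$ effective away from $v_1$ requires a nontrivial chip-firing analysis along the whole chain. The paper instead reduces at $v_k$, where $E = kv_k$ is supported, so subtracting $2kv_k$ from a $v_k$-reduced divisor leaves it $v_k$-reduced; the only input is the combinatorial bound $p_{k-1}(0) \leq b + k - 1 < 2k$, which follows from the observation that no box in row $k-n-1$ of $t$ can contain an entry smaller than $k$. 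I would regard both of these as genuine gaps in the proposal as written, even though the overall strategy is the correct one.
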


\begin{proof}
Since $D$ is sufficiently general, we may restrict our attention to those cycles $\gamma_j$ such that $j$ appears in the tableau $t$.  Note that the tableau $t(-m+1)$ contains the same symbols as $t$.  Suppose that the number $j$ appears in the nonempty set of columns $S$ of the tableau $t(-m+1)$, and let $\vec{p}$ be the lingering lattice path corresponding to the tableau $t(-m+1)$.  Since $p_j (i) > p_j (i+1)$ for all $i$ and $p_j (b-1) = k+ p_j(n+b-1)$, we see that the values $p_j (i)$ are distinct mod $k$ for $b \leq i \leq n+b-1$.  It follows that
\[
\eta_j -p_{j-1} (i) \neq 0 \pmod{k}
\]
for all $i \notin S$.  Similarly, since the values of $p_{j+1} (i)$ are distinct mod $k$, we have
\[
\eta_j - p_{j-1} (i) \neq -1 \pmod{k}
\]
for all $i$.  By Proposition~\ref{Prop:HatT}, we see that $D(-m)$ is vertex avoiding.

To see that $r(D(-m-1)) = -1$, note that no box in row $(k-n)-1$ of $t$ may contain an integer smaller than $k$.  It follows that
\[
p_{k-1} (0) \leq b + k-1 <  2k .
\]
From this, we see that the $v_k$-reduced divisor equivalent to $D(-m+1)$ has $\deg_{v_k} D < 2k$.  It follows that the $v_k$-reduced divisor equivalent to $D-(m+1)kv_k$ has negative degree at $v_k$.  Thus, $D-(m+1)E$ has negative rank.
\end{proof}

By \cite[Lemma~3.4]{Pfl16b}, the tableaux $t$ that maximize the dimension of $\mathbf{T}(t)$ are scrollar tableaux.  We reproduce the dimension compuatation below for completeness.

\begin{proposition}
\label{Prop:Remainder}
Fix positive integers $r$ and $d$.  Let $t$ be a scrollar tableau of type $(a,b)$ with $r+1$ columns and $s = g-d+r$ rows.  Let $\ell = r+1-n$.  Then
\[
\dim \mathbf{T}(t) = \rho (g,r-\ell,d) - \ell k .
\]
In particular, there exists a scrollar tableau $t$ with $r+1$ rows and $s$ columns such that
\[
\dim \mathbf{T}(t) = {\rho}_k (g,r,d) .
\]
\end{proposition}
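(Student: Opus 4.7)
The plan is to compute $\dim \mathbf{T}(t)$ directly by counting the distinct symbols appearing in $t$, and then to identify this count with $\rho(g,r-\ell,d)-\ell k$ by an elementary algebraic manipulation. Since $\dim \mathbf{T}(t)$ equals the number of integers in $\{1,\ldots,g\}$ that do not appear in $t$, the only real content is determining how many distinct symbols actually occur in $t$.

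First I would use the scrollar relation $t(x,y)=t(x+n,y-(k-n))$ to produce an explicit fundamental domain. Within the rectangle $\{0,\ldots,r\}\times\{0,\ldots,s-1\}$, a box $(x,y)$ repeats an earlier entry precisely when $(x-n,y+k-n)$ also lies in the rectangle, i.e.\ when $x\geq n$ and $y\leq s-1-(k-n)$. The complementary set of ``new'' boxes is therefore an L-shape comprising the leftmost $n$ columns together with the bottom $k-n$ rows of the remaining $\ell=r+1-n$ columns, with a total of $ns+\ell(k-n)$ boxes. Strict increase of tableau entries along rows and columns ensures the entries of this L-shape are pairwise distinct, so
\[
\dim\mathbf{T}(t)=g-ns-\ell(k-n).
\]
Using $r-\ell+1=n$ and $s-\ell=g-d+r-\ell$, one computes $\rho(g,r-\ell,d)=g-n(s-\ell)=g-ns+\ell n$; subtracting $\ell k$ yields $g-ns-\ell(k-n)=\dim\mathbf{T}(t)$, establishing the first equation.

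For the ``in particular'' clause, I would pick $\ell^\star\in\{0,\ldots,r'\}$ attaining the maximum in the definition of $\overline{\rho}_k(g,r,d)$ and build a scrollar tableau realizing this value. Setting $n=r+1-\ell^\star$, $b\equiv r+1\pmod{n}$ with $0\leq b<n$, and $a=n-b$, the type $(a,b)$ satisfies $a\geq 1$ and $b\geq 0$; in the regime where $\overline{\rho}_k$ is genuinely better than the classical Brill--Noether number, one also has $n<k$. One then defines a scrollar tableau of type $(a,b)$ by filling the L-shape with the consecutive integers $1,2,\ldots,ns+\ell^\star(k-n)$ in a rows-then-columns order, and extending to the rest of $t$ by the scrollar periodicity. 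The formula from the first part then gives $\dim\mathbf{T}(t)=\rho(g,r-\ell^\star,d)-\ell^\star k=\overline{\rho}_k(g,r,d)$, and boundary or degenerate cases (where one arm of the L collapses) are handled by the analogous standard rectangular Young tableau construction.

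The principal thing to verify is that this explicit filling is a genuine $\vec{\mu}$-displacement tableau with entries in $\{1,\ldots,g\}$. The displacement condition is automatic on the L-shape (where entries are pairwise distinct), and the scrollar period $(n,n-k)$ is calibrated so that repeated symbols in $t$ occur at lattice distance exactly $k=\mu_j$ for the middle indices $k\leq j\leq g-k+1$ where the torsion constraint is non-trivial. The required bound $g\geq ns+\ell^\star(k-n)$ on the largest entry is exactly the non-negativity of $\dim\mathbf{T}(t)=\overline{\rho}_k(g,r,d)$, which is the case of interest since otherwise the statement is vacuous by Pflueger's upper bound on $W^r_d(\Gamma)$.
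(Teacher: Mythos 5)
Your proof of the dimension formula is essentially identical to the paper's: both count the distinct symbols of $t$ via the L-shaped fundamental domain (first $n$ columns plus last $k-n$ rows), obtaining $ns+\ell(k-n)$ distinct entries, and then verify algebraically that $g-ns-\ell(k-n)=\rho(g,r-\ell,d)-\ell k$. One small correction: the pairwise distinctness of the entries in the L-shape does not follow from strict increase along rows and columns (which says nothing about two boxes lying in different rows \emph{and} different columns); rather it is immediate from the ``only if'' direction of the definition of a scrollar tableau, since no two boxes of the L-shape differ by the period $(n,\,n-k)$. Your explicit construction for the ``in particular'' clause (choosing $\ell^\star$ attaining the maximum, setting $n=r+1-\ell^\star$, and filling the L-shape consecutively) supplies a step the paper leaves implicit, and is a reasonable way to do it.
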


\begin{proof}
It suffices to show that the number of distinct symbols in the tableau $t$ is
\[
(r+1-\ell)(s-\ell) + \ell k = n(s-(r+1-n)) + (r+1-n)k.
\]
Note that each box in the first $n$ columns of $t$ contains a distinct symbol, as does each box in the last $k-n$ rows.  All other symbols appearing in the tableau $t$ occur in one of the boxes in this L-shaped region.  It follows that the number of distinct symbols in $t$ is
\begin{eqnarray*}
ns + (r+1)(k-n) - n(k-n) &=& n(s-(r+1-n)) + (r+1-n)k.
\end{eqnarray*}
\end{proof}

Let $t$ be a scrollar tableau of type $(a,b)$ with $r+1$ columns, and let $D \in \mathbf{T}(t)$ be sufficiently general.  Let $m = \lfloor \frac{r+1}{n} \rfloor$.  As in Section~\ref{Sec:TheGraph}, if $b \neq 0$, for $0 \leq i < b$ we let $D(-m)_i$ be the unique divisor equivalent to $D-mE$ such that $D(-m)_i-iv_1 - (b-1-i)w_g$ is effective, and let $\psi_i$ be a piecewise linear function such that $\ddiv \psi_i = D(-m)_i - D(-m)_{b-1}$.  If $b=0$, then $D(-m) = 0$ and $\psi$ is the constant function.

Similarly, for $b \leq i < n$, we let $D(-m+1)_i$ be the unique divisor equivalent to $D-(m-1)E$ such that $D(-m+1)_i - iv_1 - (n+b-1-i)w_g$ is effective, and let $\psi_i$ be a piecewise linear function such that $\ddiv \psi_i = D(-m+1)_i - (D(-m)_{b-1} + E_1)$.  Note that the poles of $D(-m+1)_i$ are not along $D(-m+1)_{n+b-1}$ as in Section~\ref{Sec:TheGraph}, but instead along $D(-m)_{b-1} + E_1$.  The reason for this choice is that the functions $\psi_i$ will later be used to define a tropical $\Sigma (a,b)$-collection on $\Gamma$.

\begin{lemma}
\label{Lem:Independent}
The set of functions
\[
\{\varphi_0 + \psi_0 , \ldots , \varphi_0 + \psi_{b-1}\} \cup \{\varphi_1 + \psi_0 , \ldots , \varphi_1 + \psi_{b-1}\} \cup\{ \psi_b , \ldots , \psi_{n-1}\}
\]
is tropically independent in the sense of \cite[Definition 3.1]{JP14}.
\end{lemma}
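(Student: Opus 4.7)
I would proceed by contradiction. Suppose the collection is tropically dependent: there exist real constants $c_{\epsilon,i}$ (for $\epsilon\in\{0,1\}$, $0\le i\le b-1$) and $c_j$ (for $b\le j\le n-1$) such that the piecewise linear function
\[
\theta(p) \;=\; \min\Bigl\{c_{\epsilon,i}+\varphi_\epsilon(p)+\psi_i(p),\ c_j+\psi_j(p)\Bigr\}
\]
attains its minimum at $\geq 2$ indices at every point $p\in\Gamma$. The first step is to recognize that all $n+b$ functions live in a common tropical linear series. Using the divisor computations
\[
\ddiv(\varphi_\epsilon+\psi_i)+\bigl(D(-m)_{b-1}+E_1\bigr)\;=\;E_\epsilon+D(-m)_i,
\]
\[
\ddiv(\psi_j)+\bigl(D(-m)_{b-1}+E_1\bigr)\;=\;D(-m+1)_j,
\]
each representative is effective of the common degree $\deg(D)-(m-1)k$, and therefore $\ddiv(\theta)+(D(-m)_{b-1}+E_1)$ is an effective divisor in the class $D-(m-1)E$.

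Next I would exploit the sufficiently general choice of $D$ coming from Corollary~\ref{Cor:Ranks}: $D(-m)$ is vertex avoiding, so none of the support points of the divisors $D(-m)_i$ or $D(-m+1)_j$ coincide with any $v_j$ or $w_j$, and the positions of chips on each cycle are governed by the lingering lattice path of the associated scrollar tableau. Cycle by cycle, one determines explicitly which of the $n+b$ functions can achieve the minimum and where. On bridges, the slopes of $\psi_i$ along $\beta_j$ are encoded by $p_j(i)$ (and similarly for the $\varphi_\epsilon+\psi_i$ once one accounts for the $\pm 1$ shift from $\varphi_0$); the scrollar structure of $t$ guarantees that these slope vectors separate into the two natural ``halves'' indexed by $(\epsilon,i)$ with $i<b$ and by $j$ with $b\le j<n$.

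With this setup, the contradiction is produced by a tropical version of the basepoint free pencil trick. Define $\eta = \min_{\epsilon,i}(c_{\epsilon,i}+\psi_i)$ obtained by ignoring the $\varphi_\epsilon$ summands, regarded as a function assembled from representatives of $D-mE$. By Corollary~\ref{Cor:Ranks}(1) the divisor class $D-mE$ has rank $b$ and the $\psi_i$ with $i<b$ are tropically independent in its linear series, so no dependence can arise purely within the first $2b$ functions once one resolves the $\varphi_\epsilon$ split. The hypothetical dependence $\theta$ would therefore force a nontrivial relation in the tropical linear series of $D-(m+1)E$; but by Corollary~\ref{Cor:Ranks}(2) this class has rank $-1$, meaning its linear series contains no effective representative, and the resulting contradiction establishes the lemma.

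The main obstacle in executing this plan is the tropical basepoint-free pencil trick step: extracting the algebraic injectivity of $H^0(E)\otimes H^0(D-mE)\to H^0(D-(m-1)E)$ at the level of piecewise linear functions requires a careful per-cycle analysis of where the minima in the defining expression for $\theta$ can simultaneously be attained by a function of the form $\varphi_0+\psi_i$ and one of the form $\varphi_1+\psi_{i'}$. This is exactly the place where vertex avoidance and the rigid combinatorics of scrollar tableaux — together with the fact that $\varphi_0-\varphi_1=\varphi_0$ has the explicit slope data pictured at the end of Section~\ref{Sec:TheGraph} — must be invoked to rule out a doubly-attained minimum at every point of $\Gamma$.
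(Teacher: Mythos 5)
There is a genuine gap: your argument never actually produces the contradiction. The entire content of the lemma is concentrated in the step you defer to the end — passing from a hypothetical tropical dependence $\theta$ to ``a nontrivial relation in the tropical linear series of $D-(m+1)E$'' — and there is no tropical analogue of the basepoint free pencil trick that performs this passage. The algebraic trick rests on the exactness of $0 \to H^0(D-2E) \to H^0(E)\otimes H^0(D-E) \to H^0(D)$, and nothing of the sort holds for min-plus combinations of piecewise linear functions: a point where the minimum in $\theta$ is attained by both $\varphi_0+\psi_i$ and $\varphi_1+\psi_{i'}$ with $i\neq i'$ does not manufacture an effective representative of $D-(m+1)E$. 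Likewise, your claim that ``the $\psi_i$ with $i<b$ are tropically independent'' because $r(D-mE)=b$ is exactly the kind of inference that fails tropically — rank bounds do not imply independence of chosen functions, which is why the machinery of \cite[Definition 3.1]{JP14} exists in the first place. Your setup (the identities $\ddiv(\varphi_\epsilon+\psi_i)+(D(-m)_{b-1}+E_1)=E_\epsilon+D(-m)_i$ and $\ddiv(\psi_j)+(D(-m)_{b-1}+E_1)=D(-m+1)_j$) is correct, but the proof stops where the difficulty begins.

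The paper's proof is much more elementary and avoids all of this: it suffices to check that the $n+b$ functions have pairwise distinct slopes along the single bridge $\beta_{k-1}$, since distinct affine slopes on an edge force the minimum to be attained twice at only finitely many points of that edge. Writing $\vec{p}$ for the lingering lattice path of $t(-m+1)$, one computes $\sigma_{k-1}(\varphi_0+\psi_i)=k+p_{k-1}(n+i)=p_{k-1}(i)$ and $\sigma_{k-1}(\varphi_1+\psi_i)=p_{k-1}(n+i)$ for $i<b$, and $\sigma_{k-1}(\psi_i)=p_{k-1}(i)$ for $i\geq b$; the scrollar identity $p_{k-1}(i)=k+p_{k-1}(n+i)$ together with the strict monotonicity $p_{k-1}(i)>p_{k-1}(i+1)$ shows these $n+b$ values are exactly $p_{k-1}(0),\ldots,p_{k-1}(n+b-1)$, all distinct. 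If you want to salvage your approach, you would need to replace the pencil-trick step with precisely this kind of explicit slope bookkeeping.
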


\begin{proof}
It suffices to show that these functions have distinct slopes $\sigma_{k-1}$ along the bridge $\beta_{k-1}$.  If $\vec{p}$ is the lingering lattice path corresponding to the tableau $t(-m+1)$, then we have
\begin{align*}
\sigma_{k-1} (\varphi_0 + \psi_i) = k+p_{k-1} (n+i) = p_{k-1} (i)  &\mbox{   for $i<b$} \\
\sigma_{k-1} (\varphi_1 + \psi_i) = p_{k-1} (n+i)  &\mbox{   for $i<b$} \\
\sigma_{k-1} (\psi_i) = p_{k-1} (i)  &\mbox{   for $i \geq b$.}
\end{align*}
Since $p_{k-1} (i) > p_{k-1} (i+1)$ for all $i$, we see that all of these slopes are distinct.
\end{proof}

\section{A lifting result for maps to toric varieties}
\label{Sec:Def}

In this section, we prove our main tropical lifting result, Theorem~\ref{thm: lifting}, for tropical stable maps from chains of cycles. Once the lifting result has been established, an analysis of piecewise linear functions in linear systems on special chains of cycles, in the spirit of~\cite{JP14,JP16}, gives us the desired lifting for divisors. This is carried out in the next section.

\subsection{Strategy} As the proof will pass through several reductions, we begin with an outline of the general strategy. Given a tropical map $\Gamma\to \RR^r$, we choose a complete fan $\Delta$ supported on $\RR^r$.  After a subdivision of $\Gamma$, we interpret the map $\Gamma\to \Delta$ of polyhedral complexes as encoding logarithmic, or ``zeroth'' order, deformation data of a marked nodal curve $C_0$ mapping to the toric variety $Z$ defined by $\Delta$. As we have seen in Section~\ref{Sec:Deform}, $\Gamma\to \Delta$ determines a single point in a moduli cone $T_\Theta$ parametrizing tropical stable maps to $\Delta$ with the given combinatorial type. Recall that on the algebraic side, there is a moduli space $\mathscr L$ of deformations of the map $C_0\to Z$, and the analytic formal fiber $\mathscr L^\beth$ admits a continuous tropicalization map $\trop: \mathscr L^\beth\to T_\Theta$ by work of the second author~\cite{R16a}.

We must determine when the chosen moduli point $\Gamma\to \Delta$ lies in the image of this tropicalization map. By the general machinery of perfect obstruction theories in the logarithmic setting, the deformation space $\mathscr L$ is cut out of the formal toric variety defined by the cone $T_\Theta$, by equations corresponding to the obstructions\footnote{We direct the reader to the MathOverflow post of Jonathan Wise for an illuminating discussion~\cite{WisOverflow}.}. The strategy is to place sufficient hypotheses on $\Gamma\to \RR^r$ to detect the tropicalizations of these equations, thus characterizing the image of the tropicalization map above. For the chains of cycles, as previously discussed, obstructions appear when deforming characters that are orthogonal to the cycles. Our hypotheses ensure that these obstructions can be analyzed ``cycle-by cycle'' and ``character-by-character'', see Assumption~\ref{assumption}.

Throughout, we use the birational invariance properties of logarithmic stable maps~\cite{AW}, which ensure that we may perform toric modifications to the toric variety at various points in our analysis without changing the realizable locus in $T_\Theta$. Together with the combinatorial interpretation for the obstruction space given by Cheung, Fantini, Park, and Ulirsch~\cite[Section 4]{CFPU}, this allows us to present the space of maps to $Z$ as a logarithmically smooth fibration over a space of maps to a product of projective lines. In turn, this can be studied as a fiber product of spaces of maps to $\PP^1$. It is a special property of the chain of cycles, and in fact of our specific geometry, that the obstructions to tropical lifting all arise from lifting characters.

The spaces of maps to $\PP^1$ can be analyzed using Berkovich analytic techniques. Our hypotheses will be such that adjacent parts of the cycle can be lifted independently of each other via Speyer's results~\cite{Sp07}, and these analytic lifts are pasted together using the gluing procedure of Amini, Baker, Brugall\'e, and Rabinoff~\cite{ABBR}. Via lifting theorems for tropical intersections, the analysis on each $\PP^1$ characterizes a set of tropical curves in $T_\Theta$ for which all obstructions vanish, and we obtain the requisite realizability theorem.

\subsection{Statement of the lifting criterion} Let $T$ be an algebraic torus of dimension $r$ with character lattice $M$ and cocharacter lattice $N$. We write $N_\RR$ for $\Hom(M,\RR)$. Our primary interest will be in curves of the following form.

\begin{assumption}\label{assumption}
Let $[\Gamma\to \Delta]$ be a trivalent, maximally degenerate combinatorial type for a tropical stable map.
\begin{enumerate}[(A)]
\item The image of $\Gamma$ in $\mathcal M_g^{\trop}$ is a chain of cycles, see Figure~\ref{Fig:TheGraph}.
\item The edge directions of each cycle of $\Gamma$ spans a subspace in $N_\RR$ of codimension at most $1$.
\item Consecutive cycles are transverse, i.e. the edge directions of any two consecutive cycles span $N_\RR$.
\end{enumerate}
\end{assumption}

Note that every tropical curve $\Gamma$ of genus $g\geq 1$ has a well-defined stable metric graph $\Gamma^{\mathrm{st}}$, given by forgetting all the ends and stabilizing. Given a point $v\in\Gamma$, we will refer to its image in $\Gamma^{\mathrm{st}}$ as the \textit{stabilization} of $v$.

\begin{definition}\label{def: naive-well-spacedness}
Assume $\Psi : \Gamma\to \Delta$ is a tropical stable map such that the image of $\Gamma$ in $\mathcal M_g^{\trop}$ is a chain of $g$ cycles. Let $H\subset N_\RR$ be the hyperplane containing a cycle of $\Gamma$. Let $\Gamma_H^i$ be the connected component of $\Psi^{-1}(H\cap f(\Gamma))$ containing the $i^{\mathrm{th}}$ cycle $\gamma_i$ of $\Gamma$. Denote the $1$-valent vertices of $\Gamma_H^i$ by $v^i_1,\ldots,v^i_k$ and by $\ell_j^i$ the distance from $v_j^i$ to $\gamma_i$.
\begin{enumerate}[(WS1)]
\item The map $\Psi$ is \textbf{well-spaced at $\gamma_i$ with respect to $H$} if the the minimum of the multiset of distances $\{\!\{\ell^i_1,\ldots, \ell^i_k\}\!\}$ occurs at least twice, and for at least one of the points $v_j^i$ where this minimum is achieved, the stabilization of $v_j^i$ lies on $\gamma_i$.
\item The map $\Psi$ is said to be \textbf{well-spaced with respect to $H$} if it is well-spaced at all cycles $\gamma_i$.

\item The map $\Psi$ is said to be \textbf{naively well-spaced} if it is well-spaced with respect to all hyperplanes.
\end{enumerate}
\end{definition}

Well-spacedness in genus $1$ is a sufficient condition for realizability by work of Speyer~\cite{Sp07}. We refer to the above form of well-spacedness for the chain of cycles as ``naive'' because to expect realizability of such curves assumes that there are no further obstructions than the ones contributed by each cycle individually. Our main result of this section is that, if the map satisfies Assumption~\ref{assumption}, then naive well-spacedness is a sufficient condition for lifting.

\begin{theorem}\label{liftingtheorem}
Let  $\Psi: \Gamma\to \Delta$ be a tropical prestable map satisfying the conditions of Assumption~\ref{assumption}. If $\Psi$ is naively well-spaced, then there exists a smooth genus $g$ algebraic curve $C$ over a valued field $K$ extending $\CC$, and a map
\[
F: C\to Z
\]
such that the tropicalization of $F$ is $\Psi$.
\end{theorem}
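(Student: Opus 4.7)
The plan is to run the strategy sketched in the introduction to this section, reducing the realizability question for $\Psi\colon \Gamma\to\Delta$ to a product of genus $1$ lifting problems that fall within the scope of Speyer's theorem. As a first step, I would choose an initial toric variety $Z$ with fan $\Delta$ and, after a suitable subdivision, interpret $\Psi$ as providing the combinatorial data of a logarithmic prestable map $[f_0\colon C_0\to Z]$ in the sense of Section~\ref{sec: realizability}, producing a distinguished point $[\Psi]\in T_\Theta$. By the tropicalization theorem recalled in Section~\ref{sec: realizability}, realizability of $\Psi$ is equivalent to showing $[\Psi]$ lies in the image of $\trop\colon \mathscr L(Z)^\beth\to T_\Theta$. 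Using the birational invariance of logarithmic stable maps~\cite{AW}, I am free to replace $Z$ by any toroidal modification without changing the realizable locus, and I would exploit this to place $[\Psi]$ in a convenient position relative to the toric boundary.

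The main simplification comes from the Abramovich--Wise comparison between the relative and absolute obstruction theories, together with the combinatorial description of the obstruction group of Cheung, Fantini, Park, and Ulirsch~\cite{CFPU}: the only obstructions to deforming $f_0$ come from characters $m\in M$ that vanish on the span of some cycle $\gamma_i$ of $\Gamma$. By Assumption~\ref{assumption}(B), each such bad character is determined up to scalar by the unique hyperplane $H\supset \gamma_i$, and by Assumption~\ref{assumption}(C), consecutive cycles cannot be obstructed by the same character. This separability is what lets me analyze the obstructions cycle-by-cycle and character-by-character. Concretely, I would use a suitable toric modification and the factorization $\mathscr L(Z)\to\mathscr L(\mathscr A_Z)$ to present $\mathscr L(Z)$, analytically locally at $[f_0]$, as a logarithmically smooth fibration over a fiber product of deformation spaces of maps to $\PP^1$, one factor for each bad character/cycle pair $(m_i,\gamma_i)$. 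The character $m_i$ composed with $\Psi$ defines a tropical map $\Gamma\to \RR$ whose restriction to a neighborhood of $\gamma_i$ is a genus $1$ tropical curve in $\RR$, and the remaining cycles contribute only non-superabundant data thanks to the transversality hypothesis.

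The crucial input is then Speyer's realizability theorem in genus $1$~\cite[Theorem 3.4]{Sp07}: well-spacedness of $m_i\circ\Psi$ at $\gamma_i$ with respect to $H$ is precisely the condition needed to lift the $i^{\mathrm{th}}$ ``piece'' of the map, and this is exactly what naive well-spacedness gives for \emph{every} cycle-hyperplane pair. I would first lift each local contribution using Speyer, then use the semistable vertex decomposition machinery of Amini--Baker--Brugall\'e--Rabinoff~\cite{ABBR,BPR16} to glue these local analytic solutions into a global map $C\to\PP^1$ realizing each bad character. Because distinct bad characters are supported on disjoint (or only trivially overlapping) collections of cycles, these $\PP^1$-valued liftings can be performed independently and then combined through the fiber product structure; the remaining, non-superabundant directions are handled by the analogue of Theorem~\ref{unobstructed-lifting}, which provides unobstructed deformations once the bad characters have been dealt with. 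Finally, a tropical intersection/lifting argument, in the spirit of~\cite{OP,R16a}, upgrades the set-theoretic lift to a genuine preimage of $[\Psi]$ under $\trop$, producing the required curve $C$ and map $F\colon C\to Z$.

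The hardest step will be the bookkeeping for the fibered description of $\mathscr L(Z)$ near $[f_0]$: one must verify that the obstructions really do factor through the individual genus $1$ subproblems, with no ``cross-obstructions'' between different bad characters. This is where the transversality in Assumption~\ref{assumption}(C) and the maximal degeneracy of $\Gamma$ are essential, since they ensure that the tangent--obstruction complex splits compatibly with the partition of cycles by their spanning hyperplanes, so that Speyer's cycle-by-cycle criterion is genuinely sufficient.
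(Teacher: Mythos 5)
Your proposal follows essentially the same route as the paper's proof: reduce to the characters orthogonal to the cycles, lift each induced map to $\PP^1$ cycle-by-cycle via Speyer's genus~$1$ theorem and the gluing of~\cite{ABBR}, combine the liftings through a fiber product and the Osserman--Payne tropical intersection theorem, and handle the complementary, non-superabundant directions by the unobstructedness argument of Theorem~\ref{unobstructed-lifting}. The one step you underplay is that the characters $\chi_1,\ldots,\chi_g$ need not be linearly independent in $M$, so lifting all the projections to $(\PP^1)^g$ does not by itself produce a map to the relevant quotient toric variety $Z_L$; the paper's Proposition~\ref{lifting-ob-directions} disposes of these relations by showing that the composite map to the complementary quotient $Z_H$ contracts the source curve, so they impose no further obstructions.
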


Note that Theorem~\ref{thm: lifting} follows immediately from this result, by passing to the open subscheme of $C$ that maps to the dense torus of $Z$.

\subsection{Maps to the projective line} In this subsection, we establish the lifting theorem for the special case of maps to $\PP^1$. The result is proved via a reduction to the genus $1$ case, using semistable vertex decompositions for morphisms of analytic curves. The reader is encouraged to think of this as a non-archimedean analytic analogue of building a map to $\PP^1$ by cutting source and target along simple closed curves, building a collection of maps, and then gluing them together.

\begin{theorem}\label{thm: mapstoP1}
Let $\varphi: \Gamma\to \RR$ be a tropical map satisfying Assumption~\ref{assumption}. There exists a smooth proper curve $C$ of genus $g$ over a non-archimedean extension of $\CC$ and a map $f: C\to \PP^1$ with $f^{\trop} = \varphi$ if and only if $\varphi$ is naively well-spaced.
\end{theorem}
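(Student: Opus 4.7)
The plan is to prove both directions by reducing to Speyer's genus $1$ realizability theorem, gluing local lifts via the Amini--Baker--Brugall\'e--Rabinoff semistable vertex decomposition. Since the target is $\RR$, the ``hyperplanes'' appearing in Definition~\ref{def: naive-well-spacedness} are single points, so Assumption~\ref{assumption}(B) forces each cycle of $\Gamma$ to either cover a segment of $\RR$ or contract to a point, and Assumption~\ref{assumption}(C) forbids two consecutive cycles from contracting simultaneously. Thus the contracted cycles form an isolated set, each separated from the others by a non-contracted cycle and a bridge that maps as a subset of $\RR$ with non-zero slope.

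For the necessity direction, suppose $f: C \to \PP^1$ is a lift with $f^\trop = \varphi$. Fix any contracted cycle $\gamma_i$ with image $\{h\} \subset \RR$ and let $\Gamma_H^i$ be the corresponding preimage component. Under the skeletonization machinery of~\cite{BPR16}, $\Gamma_H^i$ corresponds to a genus $1$ sub-curve $C_i$ of (a suitable semistable model of) the source, together with a restricted map $C_i \to \PP^1$ whose image contracts to a point of $\PP^1$ and whose punctures tropicalize to the leaves of $\Gamma_H^i$ with lengths $\ell_1^i,\ldots,\ell_k^i$. Speyer's well-spacedness criterion~\cite[Theorem 3.4]{Sp07}, in its necessity form, then yields that the minimum of the multiset $\{\!\{\ell_1^i,\ldots,\ell_k^i\}\!\}$ is attained at least twice. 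Since $i$ was arbitrary, $\varphi$ is naively well-spaced.

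For the sufficiency direction, I would perform a cycle-by-cycle decomposition: cut $\Gamma$ at chosen interior points of each bridge $\beta_j$, creating pieces $\Gamma^{(1)},\ldots,\Gamma^{(g)}$, where $\Gamma^{(i)}$ contains the cycle $\gamma_i$ together with parts of its adjacent bridges (truncated to infinite rays at the cuts) and the trees attached. Each $\Gamma^{(i)}$ is a tropical curve of genus $1$ with a piecewise-linear map $\varphi^{(i)}: \Gamma^{(i)} \to \RR$. Now split into cases: if $\gamma_i$ is not contracted, then $\varphi^{(i)}$ is non-superabundant and a genus $1$ lift $f_i: C_i \to \PP^1$ exists by the genus $1$ analogue of Theorem~\ref{unobstructed-lifting} (or directly by Nishinou--Siebert/Speyer); if $\gamma_i$ is contracted, then the naive well-spacedness hypothesis on $\varphi$ translates exactly to Speyer's well-spacedness condition on $\varphi^{(i)}$, so Speyer's theorem yields a lift $f_i: C_i \to \PP^1$. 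In both cases one obtains an analytic component with punctures whose slopes and valuation data at each cut are prescribed by $\varphi$. One then invokes the gluing procedure of~\cite{ABBR,BPR16} to paste the $f_i$ together along matching pairs of punctures, producing a proper smooth genus $g$ curve $C$ over a valued extension of $\CC$ with a map $f: C \to \PP^1$ whose skeleton recovers $\varphi$.

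The main obstacle is the gluing step. Each local lift $f_i$ comes with a finite-dimensional moduli of freedom (acting by $\mathrm{Aut}(\PP^1) = \mathrm{PGL}_2$ on the target and by variation of moduli of the source), and to glue consistently we must match the prescribed values of the $f_i$ at each pair of adjacent punctures. Two observations make this tractable: first, the gluing is along tree-like bridge segments where no further obstruction to lifting can arise; second, transitivity of $\mathrm{PGL}_2$ on pairs $\{0,\infty\}\subset \PP^1$ gives enough freedom on each $f_i$ to align the matching pair of punctures at any cut, and successive cuts can be handled sequentially since the previously-glued piece remains free to translate along the fiber of the next puncture. The careful bookkeeping of these $\mathbb G_m$-torsors, and verification that the combinatorial data (slopes, lengths, vanishing orders) at the cuts agree, is the technical heart of the argument.
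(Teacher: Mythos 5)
Your strategy is the same as the paper's in its essential inputs: both directions reduce to Speyer's genus $1$ theorem \cite[Theorem 3.4]{Sp07} (necessity via \cite[Theorem 6.9]{BPR16}), with the local analytic lifts assembled by the gluing machinery of \cite{ABBR}. The one genuine difference is how the decomposition into genus $\leq 1$ pieces is organized. You cut the \emph{source} at interior points of the bridges; the paper instead decomposes the \emph{target} $\RR$ into overlapping open intervals $I_0,\dots,I_s$, each chosen so that every connected component of $\varphi^{-1}(I_i)$ contains at most one cycle, and takes the pieces to be these preimage components. The target-side decomposition buys two things that your version has to work for. First, it guarantees that each piece containing a contracted cycle $\gamma_j$ contains the \emph{entire} connected component of $\varphi^{-1}(h)$ through $\gamma_j$ (where $h$ is the contracted value), so the distances entering Speyer's condition are read off correctly inside that piece. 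In your setup this can fail: balancing does not prevent a bridge adjacent to a contracted cycle from itself being contracted, in which case $\Gamma_H^j$ crosses your cut point and the truncated piece $\Gamma^{(j)}$ sees the wrong multiset of distances; you would need to choose cut points outside $\varphi^{-1}(h)$ for each contracted value $h$ (or cut inside the adjacent non-contracted cycle), which is possible but must be said. Second, because the paper's pieces literally overlap on annuli $\varphi^{-1}(I_i\cap I_{i+1})$, the gluing is exactly the paste-along-annuli procedure of \cite[Section 7.2]{ABBR}, and the bookkeeping of $\mathbb G_m$-torsors that you identify as ``the technical heart'' is absorbed into that citation rather than redone by hand via $\mathrm{PGL}_2$-transitivity. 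With those two adjustments your argument closes up and is equivalent to the paper's; as written, the contracted-bridge case and the puncture-matching step are the points left open.
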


\begin{proof}
By the transversality part of Assumption~\ref{assumption}, if $\varphi$ contracts the $j^{\mathrm{th}}$ cycle, the $(j-1)^{\mathrm{st}}$ and $(j+1)^{\mathrm{st}}$ cycles remain uncontracted. For efficiency of bookkeeping, assume that $j$ is neither $1$ nor $g$. In these edge cases, we leave the mild adjustments to the reader. Choose an open interval $I$ in $\RR$  such that each connected component of $\varphi^{-1}(I)$ contains at most one cycle. If this cycle is labeled $\gamma_j$, we can ensure that $\varphi^{-1}(I)$ contains a nonempty open set in the cycles $\gamma_{j-1}$ and $\gamma_{j+1}$. See Figure~\ref{fig: surgery} below.
\begin{figure}[h!]
\begin{tikzpicture}

\draw (1.5,0) circle (0.5);
\draw (2,0)--(2.5,0);

\draw (3.5,0)--(4,0);
\draw (2.5,0) to [bend right] (3.5,0);
\draw (2.5,0) to [bend left] (3.5,0);

\draw (4.5,0) circle (0.5);

\draw[<->] (6,1.5)--(6,-1.5);
\draw [ball color=black] (6,0) circle (0.55mm);

\node at (5.5,0) {$\to$};

\begin{scope}[shift={(1,0)}]
\node at (5.75,0) {$\rightsquigarrow$};

\draw (6.5,0.5)--(7,0);
\draw (6.5,-0.5)--(7,0);
\draw (7,0)--(7.5,0);

\draw (7.5,0) to [bend right] (8.5,0);
\draw (7.5,0) to [bend left] (8.5,0);

\draw (9,0)--(8.5,0);
\draw (9,0)--(9.5,0.5);
\draw (9,0)--(9.5,-0.5);


\draw (11,-0.35)--(11,0.35);
\draw[densely dotted] (11,-0.35)--(11,-0.5);
\draw[densely dotted] (11,0.35)--(11,0.5);
\draw [ball color=black] (11,0) circle (0.55mm);
\end{scope}

\end{tikzpicture}
\caption{The figure on the left shows a superabundant tropical curve. The map is given by projection onto the vertical axis. The center cycle has been made thin to indicate that it is contracted. The figure on the right shows a neighborhood of $0$ in the target and a connected component of its preimage. We suppress the trees attached to the cycles for clarity. }\label{fig: surgery}
\end{figure}

Repeat this procedure as follows. Decompose $\RR$ into open intervals $I_0\cup I_1\cup\cdots\cup I_s$, where $I_0$ and $I_s$ are unbounded. By taking preimages via $\varphi$, we obtain a corresponding decomposition of $\Gamma$ into possibly disconnected subgraphs $\Gamma_i$ and maps $\Gamma_i\to I_i$. We may choose this decomposition such that each connected component of $\Gamma_i$ contains no contracted cycles, or contains a unique cycle that is contracted by the map to $I_i$. See Figure~\ref{fig: surgery}.

Fix $\Gamma_i\to I_i$. We claim that there exists a (non-proper) Berkovich analytic curve $E_i$ over a non-archimedean field $K$ extending $\CC$, with a map to a Berkovich analytic (generalized) annulus $A_i$, in the sense of~\cite{BPR13}, whose tropicalization is $\Gamma_i\to I_i$. To see this, extend the bounded $1$-valent vertices of $\Gamma_i$ and $I_i$ to infinite edges. This produces a tropical map $\widehat \Gamma_i \to \RR$. By Speyer's well-spacedness condition~\cite[Theorem 3.4]{Sp07}, there exists a punctured elliptic curve $\widehat E_i$ and an invertible function $\widehat E_i\to \mathbb G_m$ whose tropicalization is $\widehat \Gamma_i \to \RR$. That is, we have a commutative diagram
\[
\xymatrix{
\widehat E_i^{\an} \ar[d] \ar[r]^{\trop} & \widehat \Gamma_i \ar[d] \\
{\mathbb G}_m^{\mathrm{an}} \ar[r]_{\trop} & \RR  ,}
\]
where the horizontal arrows are projections to skeletons. The preimage of $I_i\subset \RR$ under the tropicalization map yields an annulus $A_i$, while the preimage of $\Gamma_i\subset\widehat \Gamma_i$ in $\widehat E_i^{\an}$ yields the desired analytic curve $E_i$.

Repeat this process for all indices $i$ corresponding to genus $1$ graphs with a contracted cycle. For the remaining $i$, the maps are lifted in analogous fashion. Note that there are no obstructions to lifting these maps~\cite{ABBR,CMR14a}. This process produces a collection of maps from analytic curves to star shaped neighborhoods in $\mathbb G_m^{\an}$, in the sense of~\cite[Definition 6.2]{ABBR}. These local liftings can be pasted together  along annuli $\trop^{-1}(I_j\cap I_{j+1})$ via the procedure of~\cite[Section 7.2]{ABBR}. After compactifying, this produces a map of proper analytic curves $C^{\an}\to \PP^1_{\an}$ whose tropicalization is $\Gamma\to \RR$. By non-archimedean GAGA, this map algebraizes, and we conclude that $[\Gamma\to \RR]$ is realizable provided it is well-spaced. Since $\Gamma$ is assumed to be trivalent, the converse follows from the necessity of well-spacedness in the genus $1$ case~\cite[Theorem 6.9]{BPR16}.
\end{proof}

\subsection{Maps to toric targets}  To extend the result from target $\PP^1$ to a general toric target $Z$, we express the locus of curves admitting the necessary map to $Z$ as an intersection of loci of curves admitting maps to $\PP^1$ and then tropicalize this intersection. By a well-known tropical lifting theorem of Osserman and Payne, the tropicalization of the intersection can be computed as the intersection of the tropicalizations. We use the following fact throughout, which can be proved by hand, or alternatively follows immediately from~\cite[Section 4]{AW}.

\begin{lemma}\label{lem: birational-invariance}
Let $\Delta$ and $\Delta'$ be complete fans in $N_\RR$ and let $[\varphi_1: \Gamma\to \Delta]$ and $[\varphi_2: \Gamma'\to \Delta']$ be tropical prestable maps, whose underlying maps to $N_\RR$ coincide. Then $\varphi_1$ is realizable if and only if $\varphi_2$ is realizable.
\end{lemma}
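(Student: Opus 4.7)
The plan is to reduce both realizability problems to the same question about a common refinement of the two fans, then invoke the birational invariance result for logarithmic stable maps from~\cite[Section 4]{AW}.

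First, I would choose a complete fan $\widetilde\Delta$ in $N_\RR$ refining both $\Delta$ and $\Delta'$, giving a toric variety $\widetilde Z$ with equivariant proper birational morphisms $\pi: \widetilde Z \to Z$ and $\pi': \widetilde Z \to Z'$. After subdividing the domains so that edges of $\Gamma$ and $\Gamma'$ which cross a cone boundary of $\widetilde\Delta$ are broken at their preimages, both $\varphi_1$ and $\varphi_2$ lift to tropical prestable maps to $\widetilde\Delta$. Because the subdivisions are pulled back from the fan structure on $\widetilde\Delta$ along a common underlying map to $N_\RR$, the resulting tropical maps are identified as a single tropical prestable map $\widetilde\varphi: \widetilde\Gamma \to \widetilde\Delta$.

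The next step is to show that $\varphi_i$ is realizable if and only if $\widetilde\varphi$ is. The reverse implication is immediate: composition with $\pi$ or $\pi'$ converts a logarithmic lift of $\widetilde\varphi$ into one of $\varphi_i$, and tropicalization behaves functorially along equivariant morphisms. The forward implication is the main technical point, and is provided by~\cite[Section 4]{AW}: given a minimal logarithmic lift $F: \mathscr C \to Z$ of $\varphi_i$, a toroidal modification of the source curve at the nodes whose combinatorial type must be refined yields a minimal logarithmic lift $\widetilde F: \widetilde{\mathscr C} \to \widetilde Z$ of $\widetilde\varphi$. The tropicalization of $\widetilde F$ refines that of $F$ precisely along the subdivisions imposed by the passage from $\Delta$ to $\widetilde\Delta$, so the underlying balanced map to $N_\RR$ is preserved. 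The lemma then follows by chaining the two equivalences through $\widetilde\varphi$, and the essential content is the observation that realizability is insensitive to the choice of toric compactification because logarithmic stable maps to a toric variety are invariant under equivariant birational modification of the target.
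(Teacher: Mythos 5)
Your proposal is correct and is essentially the argument the paper has in mind: the paper gives no written proof, simply remarking that the lemma "can be proved by hand, or alternatively follows immediately from~\cite[Section 4]{AW}," and your reduction to a common refinement followed by the Abramovich--Wise birational invariance is precisely the standard way to make that citation precise.
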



We specialize to the chain of cycles geometry. Let $\Theta = [\Gamma\to \Delta]$ be the combinatorial type of a tropical stable map satisfying Assumption~\ref{assumption}, and let $T_\Theta$ be the associated moduli cone. Let $\chi_j\in M$ be the primitive character orthogonal the cycle $\gamma_j$, noting that some of these $\chi_j$ may coincide or be $0$ if the cycle spans $N_\RR$.  Note that by the assumption, there is at most one such nonzero character for each cycle. Since we may replace $Z$ with a modification, we assume that the characters induce a morphism
\[
(\chi_1,\ldots,\chi_g): Z\to (\PP^1)^g.
\]
Interpreting a character as a map $N_\RR\to \RR$, this also induces a map $\Delta\to \RR^g$. By composition, this produces from $\Theta$ a new combinatorial type $\Upsilon = [\Gamma\to \RR^g]$. Here $\RR^g$ is given the structure of the fan of $(\PP^1)^g$.

\begin{proposition}
\label{Prop:NWSisRealized}
Let $\varphi: \Gamma\to \RR^g$ be a tropical stable map of type $\Upsilon$, in the notation above. If $\varphi$ is naively well-spaced, then $\varphi$ is realizable.
\end{proposition}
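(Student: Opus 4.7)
The plan is to factor the target $(\PP^1)^g$ into its $g$ coordinates through the characters $\chi_j$, apply Theorem~\ref{thm: mapstoP1} to each coordinate, and then assemble the coordinate-wise lifts by exploiting the logarithmic smoothness of each component moduli space over the Artin stack of logarithmically smooth curves.

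First, for each $j$, set $\varphi_j := \chi_j \circ \varphi : \Gamma \to \RR$. Since $\chi_j$ annihilates $\gamma_j$, the map $\varphi_j$ contracts the cycle $\gamma_j$. Part (C) of Assumption~\ref{assumption} forbids $\chi_j$ from vanishing on two consecutive cycles, so $\varphi_j$ still satisfies Assumption~\ref{assumption}. If $\chi_j$ happens to vanish on another (necessarily non-adjacent) cycle $\gamma_i$, then naive well-spacedness of $\varphi$ applied to the affine hyperplane $\{\chi_j = \chi_j(\varphi(\gamma_i))\} \subset N_\RR$ directly supplies well-spacedness of $\varphi_j$ at $\gamma_i$. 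Hence each $\varphi_j$ is naively well-spaced, and Theorem~\ref{thm: mapstoP1} makes each $\varphi_j$ realizable by some map $C_j \to \PP^1$.

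To merge these $g$ lifts into a single map with a common source, observe that a logarithmic prestable map to $(\PP^1)^g$ is the same data as $g$ logarithmic prestable maps to $\PP^1$ sharing a source, so
\[
\mathscr L((\PP^1)^g) \;=\; \mathscr L(\PP^1)_1 \times_{\mathfrak M} \cdots \times_{\mathfrak M} \mathscr L(\PP^1)_g,
\]
where $\mathfrak M$ denotes the Artin stack of logarithmically smooth genus $g$ curves. The proof of Theorem~\ref{thm: mapstoP1} produces more than mere realizability: at a naively well-spaced point $\varphi_j$, the obstructions decouple cycle-by-cycle, with each cycle handled by Speyer's logarithmically smooth genus $1$ lifting, and the ABBR gluing preserves log smoothness in families. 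Consequently each $\mathscr L(\PP^1)_j \to \mathfrak M$ is logarithmically smooth at the point associated to $\varphi_j$. Since log smoothness is stable under fiber products over a common base, $\mathscr L((\PP^1)^g) \to \mathfrak M$ is logarithmically smooth near $(\varphi_1, \ldots, \varphi_g)$, and the tropicalization map on its analytic formal fiber surjects onto a neighborhood of $\varphi$ in $T_\Upsilon$. This yields the desired lift.

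The main subtlety is upgrading the output of Theorem~\ref{thm: mapstoP1} from an existence statement to the relative log smoothness of $\mathscr L(\PP^1)_j \to \mathfrak M$ at naively well-spaced points. This amounts to checking that each of the local ingredients, namely Speyer's genus $1$ theorem and the semistable vertex and ABBR-style gluing, functions in families over the log deformation space of the nodal source curve rather than only over a single geometric point. Granting this family-version, the rest is a formal consequence of base change for log smoothness, together with the surjectivity of tropicalization on the analytic formal fiber of a log smooth morphism of log stacks.
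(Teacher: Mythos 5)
Your reduction to the coordinate projections $\varphi_j = \chi_j \circ \varphi$ and the appeal to Theorem~\ref{thm: mapstoP1} for each one is the right first move, and it matches the paper. But the assembly step rests on a claim that is not merely unproven --- it is false. You assert that at a naively well-spaced point the morphism $\mathscr L(\PP^1)_j \to \mathfrak M$ is logarithmically smooth, because ``the obstructions decouple cycle-by-cycle, with each cycle handled by Speyer's logarithmically smooth genus $1$ lifting.'' Speyer's theorem is not a log smoothness statement; it is precisely the statement that when $\chi_j$ contracts a cycle, the space of maps is \emph{obstructed} and the realizable locus inside the moduli cone is the proper piecewise-linear subset cut out by well-spacedness. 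If $\mathscr L(\PP^1)_j$ were log smooth over $\mathfrak M$ (equivalently, over the Artin fan) at such a point, the tropicalization map $\mathscr L(\PP^1)_j^\beth \to T_\Theta$ would surject onto a full neighborhood of $\varphi_j$, so every nearby tropical map --- including the non-well-spaced ones, which exist arbitrarily close by since well-spacedness is a positive-codimension condition whenever a cycle is contracted --- would be realizable. That contradicts the necessity of well-spacedness in genus $1$ (\cite[Theorem 6.9]{BPR16}), which the paper itself invokes at the end of the proof of Theorem~\ref{thm: mapstoP1}. So base change for log smoothness cannot be the mechanism that glues the $g$ lifts.

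The actual difficulty, which your proposal sidesteps, is that each condition ``the $j$-th projection lifts'' carves out a \emph{positive-codimension} locus in $\mathscr L(\mathscr A_{\PP^1}^g)$ (codimension equal to the number of cycles contracted by $\chi_j$, by Riemann--Hurwitz), and one must show that the intersection of these $g$ loci is nonempty and that its tropicalization is the intersection of their tropicalizations. The paper does this by checking that the tropicalized loci meet \emph{properly} in $T_\Upsilon$ --- the codimension of the intersection of two of them equals the number of cycles contracted by either character --- and then invoking the Osserman--Payne theorem \cite[Theorem 1.1]{OP} that tropicalization commutes with proper intersections. Without some substitute for this proper-intersection argument, knowing that each $\varphi_j$ is individually realizable does not produce a single curve $C$ realizing all of them simultaneously: the curves $C_j$ produced coordinate-by-coordinate need only share a skeleton, not an isomorphism class. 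To repair your argument you would need to replace the false log smoothness claim with a genuine transversality statement for the obstructed loci, which is exactly what the paper's dimension count supplies.
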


\begin{proof}
Fix a minimal logarithmic stable map $[C_0\to (\PP^1)^g]$ whose combinatorial type is $\Upsilon$. Let $\mathscr L(\mathscr A_{\PP^1}^g,\chi_j)$ be the fiber product
\[
\xymatrix{\mathscr L(\mathscr A_{\PP^1}^g,\chi_j) \ar[d] \ar[r] & \mathscr L(\mathscr A_{\PP^1}^g)  \ar[d] \\
\mathscr L(\PP^1) \ar[r]& \mathscr L(\mathscr A_{\PP^1})  ,}
\]
where the right vertical is induced by $\chi_j$, and the bottom arrow is the induced map to the Artin fan. The moduli space $\mathscr L(\mathscr A_{\PP^1}^g,\chi_j)$ parametrizes families of maps to $\mathscr A_{\PP^1}^g$ together with a compatible lift of the $j^{\textrm{th}}$ projection to $\PP^1$.  To simultaneously lift all the projections, observe that we have a natural morphism
\[
\mathscr L(\mathscr A_{\PP^1}^g,\chi_1)\times_{\mathscr L(\mathscr A_{\PP^1}^g)} \cdots \times_{\mathscr L(\mathscr A_{\PP^1}^g)} \mathscr L(\mathscr A_{\PP^1}^g,\chi_g)\to \mathscr L(\mathscr A_{\PP^1}^g).
\]
The image of the formal fiber of this fiber product under the map
\[
\trop: \mathscr L(\mathscr A_{\PP^1}^g)^{\beth}\to T_{\Upsilon}
\]
is the tropicalization of the intersection of the images of all $\mathscr L(\mathscr A_{\PP^1}^g,\chi_j)$ in $\mathscr L(\mathscr A_{\PP^1}^g)$. By the discussion in Section~\ref{sec: realizability}, the tropicalization of this intersection is the locus of realizable curves in $T_{\Upsilon}$. On the other hand, by Theorem~\ref{thm: mapstoP1}, the image of
\[
\trop: \mathscr L(\mathscr A_{\PP^1}^g,\chi_j)^{\beth}\to\mathscr L(\mathscr A_{\PP^1}^g)^{\beth}\to T_{\Upsilon}
\]
is the locus of maps in $T_{\Upsilon}$ such that all cycles orthogonal to the character $\chi_j$ are naively well-spaced. Assume that $\chi_j$ is nonzero. By the Riemann--Hurwitz formula for maps to $\PP^1$, we see that
 the codimension of the locus of maps in $\mathscr L(\mathscr A_{\PP^1}^g)$ where the $j^{\mathrm{th}}$ projection lifts is equal to the number of cycles in $\Upsilon$.

Consider nonzero characters $\chi_{j_1}$ and $\chi_{j_2}$, and examine the codimension of the intersection
 \[
\trop(\mathscr L(\mathscr A_{\PP^1}^g,\chi_{j_1})) \cap \trop(\mathscr L(\mathscr A_{\PP^1}^g,\chi_{j_2})).
 \]
 The expected codimension of this intersection is equal to the number of cycles that are contracted by either $\chi_{j_1}$ or $\chi_{j_2}$.
If the codimension of the intersection is equal to the expected codimension, the tropicalizations of all $\mathscr L(\mathscr A_{\PP^1}^g,\chi_j)$ intersect properly. This is guaranteed by the condition (WS1) in the definition of naive well-spacedness, and in particular the condition that one of the minimum distances measured from a given loop occurs at a vertex that stabilizes to the loop in question. We apply~\cite[Theorem 1.1]{OP} to conclude that tropicalization commutes with intersection and it follows that the realizable locus coincides with the naively well-spaced locus, as desired.
\end{proof}

Let $\Theta$ continue to be the combinatorial type chosen at the beginning of this section, and $\Upsilon = [\Gamma\to \RR^g]$ the map induced by projections orthogonal to the characters. From the above proposition, we may lift each of the projections induced by the characters $\chi_1,\ldots, \chi_g$ to the same curve. However, these characters need not span a rank $g$ sublattice of $M$. The following proposition guarantees that the relations between these characters do not provide any further obstructions to lifting. The sublattice generated by $\chi_1,\ldots,\chi_g$ determines a quotient $N_\RR\to L_\RR$. Choose a complete fan structure $\Delta_L$ on $L_\RR$ such that $\Delta\to \Delta_L$ is a morphism of fans, inducing a dominant equivariant map $Z\to Z_L$ of toric varieties.

\begin{proposition}\label{lifting-ob-directions}
Let $\Gamma\to\Delta$ be a tropical stable map of type $\Theta$ that is naively well-spaced. Then the induced map $\Gamma\to \Delta_L$ is realizable.
\end{proposition}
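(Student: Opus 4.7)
The strategy is to invoke the preceding proposition to realize the composed tropical map $\Gamma\to\RR^g$ determined by $(\chi_1,\ldots,\chi_g)$, and then to modify the resulting algebraic lift by unit scalars so that its image lies in the subtorus $T_L\subset \mathbb G_m^g$ cut out by the linear relations among the $\chi_j$.

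Let $M'\subset M$ be the sublattice generated by $\chi_1,\ldots,\chi_g$, and let $\Lambda\subset \ZZ^g$ denote the kernel of the surjection $\ZZ^g\twoheadrightarrow M'$ sending $e_j\mapsto \chi_j$. I would first verify that the naive well-spacedness of $\Gamma\to \Delta$ passes to the composed map $\Gamma\to \RR^g$. For any hyperplane $H\subset \RR^g$ whose normal $(a_1,\ldots,a_g)$ does not lie in $\Lambda$, the pullback of $H$ to $N_\RR$ along the character map is a hyperplane in $N_\RR$ containing exactly the same cycles of $\Gamma$, so the well-spacedness condition at those cycles transfers directly. Hyperplanes whose normal lies in $\Lambda$ either miss the image $L_\RR\supseteq \Psi(\Gamma)$ or contain it entirely; in the former case no cycle lies in $H$, and in the latter case the associated component is all of $\Gamma$, which has no $1$-valent vertices, making the condition vacuous. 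By the preceding proposition, there exist a smooth proper curve $C$ over a non-archimedean extension $K$ of $\CC$ and a map $f=(f_1,\ldots,f_g):C\to (\PP^1)^g$ whose tropicalization is the composed map.

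The central observation is that for each $\underline a=(a_1,\ldots,a_g)\in\Lambda$, the identity $\sum_j a_j\chi_j=0$ in $M$ forces $\prod_j f_j^{a_j}$, a rational function on $C$, to have identically zero tropicalization. Since $C$ is smooth and proper, this function must be a nonzero constant $c_{\underline a}\in K^\times$, and the vanishing of its tropicalization forces $v(c_{\underline a})=0$, so $c_{\underline a}\in\mathcal O_K^\times$. The assignment $\underline a\mapsto c_{\underline a}^{-1}$ is a homomorphism $\Lambda\to \mathcal O_K^\times$. After replacing $K$ by its algebraic closure, $\mathcal O_K^\times$ is a divisible abelian group, so this homomorphism extends to a homomorphism $\ZZ^g\to \mathcal O_K^\times$. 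Letting $\beta_j\in\mathcal O_K^\times$ denote its value at $e_j$ and replacing each $f_j$ by $\beta_j f_j$ produces a modified map $f'$ satisfying $\prod_j(f'_j)^{a_j}=1$ for every $\underline a\in\Lambda$; since each $\beta_j$ has valuation zero, the tropicalizations of $f$ and $f'$ coincide.

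These relations exhibit $f'$ as a morphism $C\to T_L\subset \mathbb G_m^g$. Taking its closure in any proper toric compactification of $T_L$ yields a map $C\to Z'$, where $Z'$ is a toric variety with dense torus $T_L$, tropicalizing to the underlying map $\Gamma\to L_\RR$. Since $Z_L$ and $Z'$ are proper toric varieties sharing the same torus $T_L$, Lemma~\ref{lem: birational-invariance} converts this into a realization of $\Gamma\to\Delta_L$. The decisive step is the integrality statement $v(c_{\underline a})=0$ in the second paragraph; this is what permits the correction scalars $\beta_j$ to be chosen as units, thereby preserving the tropicalization under the modification.
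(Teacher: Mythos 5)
Your proof is correct in substance, but it takes a genuinely different route from the paper. The paper argues at the level of moduli: it packages the relations among $\chi_1,\ldots,\chi_g$ into a quotient torus $T_H$ with compactification $Z_H$, shows that every map in the deformation space $\mathscr L((\PP^1)^g)$ is contracted by $(\PP^1)^g\to Z_H$ to a point, obtains a morphism $\alpha:\mathscr L((\PP^1)^g)\to Z_H$, identifies $\mathscr L(Z_L)$ with the scheme-theoretic fiber $\alpha^{-1}(\underline 1)$, and then invokes functoriality of tropicalization to identify $\trop(\alpha^{-1}(\underline 1))$ with the well-spaced locus. Your argument is a pointwise, explicit version of the same mechanism: your constants $c_{\underline a}$ are precisely the coordinates of $\alpha([f])$ in $T_H=\Hom(\Lambda,\mathbb G_m)$, your observation that $v(c_{\underline a})=0$ says that $\alpha([f])$ is a unit point, and your translation by $(\beta_1,\ldots,\beta_g)$ moves the given lift into the fiber over the identity without changing its tropicalization. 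What your approach buys is concreteness: it replaces the appeal to functoriality of tropicalization (which is really a surjectivity statement about tropicalizations of fibers) with an explicit correction of a single realized map, and it makes visible why no new obstruction arises from the relations among the characters. What the paper's approach buys is uniformity with the rest of Section~7 and a statement about the moduli space itself rather than about one lift. Your preliminary verification that naive well-spacedness transfers to the composed map $\Gamma\to\RR^g$ is a point the paper uses implicitly, and it is good that you spelled it out.

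One step needs a more careful justification: the inference that $\prod_j f_j^{a_j}$ is a nonzero constant \emph{because} its tropicalization vanishes identically is not valid for an arbitrary rational function on a smooth proper curve --- a function whose zeros and poles retract to the same point of the skeleton with cancelling orders also has identically zero tropicalization without being constant. The correct argument uses the structure of the lift: since $f$ arises from a (log prestable) map to $(\PP^1)^g$, the zeros and poles of each $f_j$ lie only at the marked points, with orders given by pairing the directions $\delta_p\in N$ of the unbounded rays with $\chi_j$; hence
\[
\ddiv\Big(\prod_j f_j^{a_j}\Big)=\sum_p \Big\langle \sum_j a_j\chi_j,\ \delta_p\Big\rangle\, p = 0
\]
for $\underline a\in\Lambda$, and only then does properness of $C$ give that the function is a unit constant, with $v(c_{\underline a})=0$ read off from the tropicalization. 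This is a repairable imprecision rather than a gap; the remainder of your argument (divisibility of $\cO_K^\times$ --- or, more simply, the fact that $\ZZ^g/\Lambda$ is free so $\Lambda$ is a direct summand --- and the invocation of Lemma~\ref{lem: birational-invariance}) goes through.
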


\begin{proof}
The characters $\chi_1,\ldots,\chi_g$ determine a short exact sequence of cocharacter lattices
\[
0\to L_\RR\to \RR^g\to H_\RR\to 0,
\]
where the dual lattice of $L$ is generated by $\chi_1,\ldots, \chi_g$ in $M$. By birational invariance, we may assume that the surjection above is induced by a dominant equivariant toric morphism
\[
(\PP^1)^g\to Z_H,
\]
where $Z_H$ is an equivariant compactification of the quotient torus of $\mathbb G_m^g$ determined by $H$. Choose minimal logarithmic map $[f_0: C_0\to (\PP^1)^g]$ whose combinatorial type is $\Upsilon$. The morphism above induces a map on local moduli spaces (i.e. of logarithmic deformation spaces)
\[
\mathscr L((\PP^1)^g)\to \mathscr L(Z_H).
\]
Let $\mathscr C\to (\PP^1)^g$ be a flat family of minimal logarithmic stable maps over an irreducible base scheme $S$, containing $[C_0\to (\PP^1)^g]$. Every fiber in the induced family of maps $\mathscr C\to (\PP^1)^g\to Z_H$ contracts the source curve to a point. To see this, first observe that, by construction, the combinatorial type induced by composing $\Upsilon$ with $\RR^g\to H_\RR$ contracts the source graph. Thus, the contact orders of $[C_0\to Z_H]$ with the toric boundary are all trivial. Since $Z_H$ is a proper toric variety, $C_0$ must be contracted. Contact orders are locally constant in flat families, so the same is true for every map parametrized by $S$. It follows that we have a morphism
\[
\alpha: \mathscr L((\PP^1)^g)\to Z_H,
\]
sending a family of maps to the family of points in $Z_H$ to which the source curves are contracted. The moduli space of maps $\mathscr L(Z_L)$ is the scheme theoretic fiber $\alpha^{-1}(\underline 1)$ of the identity in the dense torus of $Z_H$. By functoriality of tropicalization for logarithmic schemes, the tropicalization of $\alpha^{-1}(\underline 1)$ is the locus of tropical maps in $T_{\Upsilon}$ that lie in the fiber above zero of the induced map $T_{\Upsilon}\to \Delta_H$.  By Proposition~\ref{Prop:NWSisRealized}, this in turn coincides with the locus of maps $\Gamma\to \Delta_L$ that are naively well-spaced, and the result follows.
\end{proof}

We conclude the proof of the lifting theorem. It remains to show that the complementary characters to $\chi_1,\ldots,\chi_g$ can be lifted without further obstructions. Since the complementary characters are precisely those that are orthogonal to none of the cycles, this follows in similar fashion to the arguments presented in Section~\ref{Sec:GenericLift}. The formal argument is given below.

\subsubsection{Proof of Theorem~\ref{liftingtheorem}} Let $F: \Gamma\to \Delta$ be a naively well-spaced tropical stable map satisfying Assumption~\ref{assumption}. Let $[C_0\to Z]$ be a minimal logarithmic stable map with combinatorial type the same as that of $\Gamma\to \Delta$. As in the preceding discussion, let $L_\RR$ be the quotient of $N_\RR$ dual to the subgroup generated by $\chi_1,\ldots,\chi_g$. Choose a splitting $N \cong L \oplus L'$. By the birational invariance of Lemma~\ref{lem: birational-invariance}, we may assume that $\Delta$ is of product type $\Delta_L\times \Delta_{L'}$, for complete fans in $L_\RR$ and $L_{\RR}'$. Let $Z = Z_L\times Z_{L'}$ be the corresponding product of toric varieties. The projection $Z\to Z_{L}$ determines a morphism of Artin fans $\mathscr A_Z\to \mathscr A_{Z_{L}}$ and similarly with $L'$. 

Let $T(\Delta)$ be the moduli cone of maps whose type is that of $F$ and let $T(\Delta_L)$ and $T(\Delta_{L'})$ be the analogous moduli of maps determined by composing $F$ with each of the two projections above. Finally, let $T_\Gamma$ be the cone in the moduli of tropical prestable curves with combinatorial type that of $\Gamma$. We now show that a moduli point in $T(\Delta)$ associated to a naively well-spaced map is realizable, i.e. that such a point lies in the image of the tropicalization map
\[
\mathscr L(Z)^\beth\to \mathscr L(\mathscr A_Z)^\beth\to T(\Delta).
\]
Define $\mathscr L(\mathscr A_Z,L)$ by the fiber product
\[
\xymatrix{\mathscr L(\mathscr A_{Z},L) \ar[d] \ar[r] & \mathscr L(\mathscr A_{Z})  \ar[d] \\
\mathscr L(Z_{L}) \ar[r]& \mathscr L(\mathscr A_{Z_{L}})  ,}
\]
and analogously for $L'$. In the cone $T_\Gamma$, the image of $\mathscr L(\mathscr A_{Z},L)^\beth$ under tropicalization map
\[
\mathscr L(\mathscr A_{Z},L)^\beth\to T(\Delta)\to T_\Gamma
\]
 is the locus of tropical curves that admit a lift with a map to $Z_L$ with the combinatorial type determined by the projection. By Proposition~\ref{lifting-ob-directions}, this includes the locus of naively well-spaced curves. Since the type $T(\Delta_{L'})$ is non-superabundant, its image in $T_\Gamma$ coincides with the image of $\mathscr L(\mathscr A_{Z},L')^\beth$ under the tropicalization map
\[
\mathscr L(\mathscr A_{Z},L')^\beth\to T(\Delta)\to T_\Gamma
\]
The intersection of the images of $T(\Delta_L)$ and $T(\Delta_{L'})$ in $T_\Gamma$ has the expected codimension. By the product decomposition of $Z$, a map from a family of curves to $Z$ is precisely given by maps from this family to $Z_L$ and $Z_{L'}$. Now applying the tropical lifting result of~\cite[Theorem 1.1]{OP}, the locus of naively well-spaced maps lies in the image of the tropicalization map $\mathscr L(Z)^\beth\to T(\Delta)$. The result follows. \qed

\section{Lifting divisors on a $k$-gonal chain of cycles}
\label{Sec:SpecialLift}

We now return to the chain of cycles $\Gamma$ with torsion profile as specified in Section~\ref{Sec:ScrollarTableaux}.
Throughout this section, we consider scrollar tableaux $t$ with the additional property that they have no \emph{vertical steps}.  That is,
\[
t(x,y+1) \neq t(x,y)+1 \mbox{   for all } x,y .
\]
Note that such tableaux exist as long as $n>1$.  When $n=1$, the scroll $\mathbb{S}(1,0)$ is just $\PP^1$ and the argument below will work without this assumption, see Remark~\ref{rem: vertical-remark}.  We let $D \in \mathbf{T}(t)$ be a sufficiently general divisor.  We let $m = \lfloor \frac{r+1}{n} \rfloor$ and write $\vec{p}$ for the lingering lattice path corresponding to the tableau $t(-m+1)$.

As in Section~\ref{Sec:GenericLift}, we modify the graph $\Gamma$ as follows.  For every point in the support of $\ddiv \varphi_i$ or $\ddiv \psi_i$, we attach a tree, trivalent away from the attaching point, based at this point.  The trees are based at the points $\langle j \rangle_j$ and $\langle -j-1 \rangle_{g+1-j}$ for $j<k$, and $\langle \eta_j - p_{j-1} (i) \rangle_j$ for every $i$ and every $j$ that does not appear in the $i^{th}$ column of the tableau $t$.  The number of leaves of each tree is equal to the number of divisors that contain that point in their support.  Note that, since $D(-m)$ is vertex avoiding, none of these trees are based at $v_j$ or $w_j$ for any $j$.  Attach infinite rays based at $v_1$ and $w_g$. In a mild abuse of notation, we continue to denote the modified graph by $\Gamma$.

We now construct a map $\Psi : \Gamma \to \mathbb{R}^n$.  If $b=0$, the map is defined by
\[
\Psi(p) = \left( \varphi_0 (p), \psi_0 (p) , \ldots , \psi_{a-2} (p) \right) .
\]
Otherwise, the map is given by $a+b$ piecewise linear functions
\[
\Psi(p) = \left( \varphi_0 (p), \psi_0 (p) , \ldots , \psi_{b-2} (p), \psi_b (p), \ldots , \psi_{n-1} (p) \right) .
\]
If $b=1$, then the list of functions $\psi_0 , \ldots , \psi_{b-2}$ should be taken as the empty collection.  Note that, in the case $a=1$, $b=0$, we have $\Psi = \varphi_0$.  The function above has been defined on the chain of cycles itself. By enforcing the balancing condition, if we fix the unbounded leaf edge directions, this piecewise linear map becomes uniquely determined on the trees attached to each cycle. By construction the unbounded edges of $\Gamma$ are parallel to the rays of the fan of $\mathbb S(a,b)$. We now consider the span of the edge directions in each cycle under this piecewise linear map.

\begin{proposition}
\label{Prop:SpecialSpan}
Let $\gamma_j \subset \Gamma$ be the $j^{\mathrm{th}}$ cycle of $\Gamma$.
\begin{enumerate}
\item  If $k \leq j \leq g-k+1$, then $\Psi(\gamma_j)$ spans the vector space $\mathbb{R}^n$.
\item  Otherwise, $\Psi(\gamma_j)$ spans a linear space of codimension at most 1.
\item  The span of any two consecutive cycles is the whole vector space $\mathbb{R}^n$.
\end{enumerate}
\end{proposition}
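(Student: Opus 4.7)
My plan is to generalize the argument of Proposition~\ref{Prop:GenericSpan}. By the balancing condition of $\Psi$ at every trivalent vertex of $\gamma_j$, the span of the edge directions in $\Psi(\gamma_j)$ coincides with the span of the outgoing directions of the edges adjacent to $\gamma_j$ but not contained in it---the two adjacent bridges and the unbounded leaves of the attached trees. Each unbounded leaf is, by construction, parallel to a ray of the fan of $\mathbb{S}(a,b)$: a leaf at a chip of $E_1$ (resp.\ $E_0$) points in the $u_0$-direction (resp.\ $u_1$-direction), and a leaf at a chip of $D(-m)_i$ or $D(-m+1)_i$ located at $\langle \xi_j - p_{j-1}(i)\rangle_j$ points in the coordinate ray direction indexed by $i$ (with the index $b-1$ replaced by the negative sum of the other coordinate rays). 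Combinatorially, by Proposition~\ref{Prop:TableauToPath}, a leaf indexed by $i$ is based on $\gamma_j$ precisely when the symbol $j$ is absent from the $i^{\mathrm{th}}$ column of $t(-m+1)$, which contains the same symbols as $t$.

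For part (1), where $k \leq j \leq g-k+1$, the cycle $\gamma_j$ carries no $E$-chips, so every attached leaf is a coordinate ray. The scrollar shape together with the no-vertical-steps hypothesis implies that $j$ occupies at most one of the $n$ residue classes of columns modulo $n$ in $t(-m+1)$, so $j$ is absent from at least $n-1$ classes, producing at least $n-1$ of the $n+1$ possible ray directions. The proof of Corollary~\ref{Cor:Ranks} shows that the values $p_{j-1}(i)$ are distinct modulo $k$ over columns not containing $j$; this guarantees the corresponding attachment points on $\gamma_j$ are distinct, and, in concert with the bridge slope vector (read off $\vec p$), produces one further independent direction. Any $n$ of the $n+1$ coordinate rays are linearly independent, and we conclude that $\Psi(\gamma_j)$ spans $\RR^n$. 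For part (2), $j$ has torsion order $0$ and hence appears in at most one column of $t$; the lone $u_0$- or $u_1$-ray contributed by $E$ together with the restricted range of $\varphi_0$ on $\gamma_j$ forces $\Psi(\gamma_j)$ into an affine hyperplane. For part (3), if either $\gamma_j$ or $\gamma_{j+1}$ is a middle cycle the result follows from (1); otherwise both are boundary cycles at the same end of $\Gamma$, and the hyperplanes identified in part (2) differ because $j$ and $j+1$ occupy different columns of $t$, forcing different $\psi_i$-coordinates to be constrained.

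The main obstacle I expect is the combinatorial verification in part (1). Concretely, one must show carefully that the scrollar pattern together with no vertical steps pins the appearances of a single symbol $j$ in $t(-m+1)$ to a single residue class of columns modulo $n$, and that the bridge slope vector does not lie in the span of the attached ray directions. The former will split into cases according to whether the occurrences of $j$ in $t(-m+1)$ are confined to the first $n$ columns, to the last $k-n$ rows, or straddle both regions; the latter will appeal to the distinctness modulo $k$ of the $p_{j-1}(i)$ from the proof of Corollary~\ref{Cor:Ranks} and the slope analysis of Lemma~\ref{Lem:Independent}.
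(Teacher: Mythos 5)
Your overall strategy---balancing reduces the span of $\Psi(\gamma_j)$ to the span of the attached tree/ray directions plus the two bridges, and one then counts distinct coordinate-ray directions using the combinatorics of the scrollar tableau---is exactly the paper's. Part (1) is essentially the paper's argument: the distinctness mod $k$ of the $p_j(i)$ yields at least $n-1$ rays at distinct points, hence $n-1$ of the vectors $e_0,\ldots,e_{n-1}$, and the bridges (whose first coordinate is $k\neq 0$) supply the remaining direction. (Two small corrections there: the single-residue-class claim follows from the scrollar condition alone, not from the no-vertical-steps hypothesis, and the extra direction comes from the bridge's nonzero $u_1$-component rather than from a further counting argument.)

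Part (2), however, is argued in the wrong direction and contains a false claim. The statement ``spans a linear space of codimension at most $1$'' is a \emph{lower} bound on the span---it is precisely what is needed to verify Assumption~\ref{assumption}(B) before invoking Theorem~\ref{liftingtheorem}. You instead argue that the boundary cycles are \emph{contained} in an affine hyperplane, which is an upper bound, and which is moreover not always true: when none of the divisors $\ddiv\psi_i$ has a chip at the point $\langle j-(g+2)\rangle_j$ (resp. $\langle j\rangle_j$), the cycle $\gamma_j$ receives at least $n$ rays at distinct points, including a translate of $u_1$ (resp. $u_0$), and spans all of $\RR^n$. The correct argument in the degenerate case is that at most one $\psi_i$ can have a chip at the $E$-chip point, so one gets at least $n-2$ distinct coordinate rays together with a two-leaf tree whose root direction is $e_y+u_1$ (or $e_y+u_0$); these $n-1$ vectors are independent, giving codimension at most $1$. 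Crucially, this analysis also identifies \emph{which} directions are missing when the codimension is exactly $1$: if $p_j-p_{j-1}=e_x-u_1$, the span omits translates of both $e_x$ and $e_x+u_1$. Your part (3) cannot be completed without this identification. The paper's part (3) then uses the no-vertical-steps hypothesis---this is where it is actually needed---to conclude $p_{j-1}-p_{j-2}\neq e_x-u_1$, so the adjacent cycle's span contains a translate of $e_x$ or of $e_x+u_1$, i.e., a vector outside the span of $\Psi(\gamma_j)$. Your suggestion that ``$j$ and $j+1$ occupy different columns of $t$'' does not by itself produce such a vector.
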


\begin{proof}
By the balancing condition, the span of each cycle $\Psi(\gamma_j)$ is equal to that of the roots of the trees based at that cycle, plus that of the two bridges emanating from that cycle.  To match the notation of Section~\ref{Sec:Scrolls}, we let $u_1$ denote first coordinate vector in $\mathbb{R}^n$, and $e_1, \ldots, e_{n-1}$ the remaining coordinate vectors.  We will write $e_0 = -\sum_{i=1}^{n-1} e_i$ and $u_0 = -u_1 -\sum_{i=b}^{n-1} e_i$.

First consider cycles $\gamma_j$ where $k \leq j \leq g-k+1$.  Since $p_j (i) > p_j (i+1)$ for all $i$ and $p_j (b-1) = k+ p_j(n+b-1)$, the values $p_j (i)$ are distinct mod $k$ for $b \leq i \leq n+b-1$.  It follows that there are at least $n-1$ infinite rays attached to $\gamma_j$ that are all based at distinct points.  These infinite rays are translates of $n-1$ of the vectors $e_0, \ldots , e_{n-1}$.  Since any $n-1$ of these vectors are linearly independent, we see that the span of $\Psi(\gamma_j)$ contains the span of $e_1, \ldots , e_{n-1}$.  The two bridges emanating from the cycle are translates of the vectors $(k, p_{j-1}(n), \ldots , p_{j-1}(n+b-2), p_{j-1}(b) , \ldots, p_{j-1} (n-1))$ and $(k, p_j(n), \ldots , p_j(n+b-2), p_j(b) , \ldots, p_j (n-1))$.  Since the first coordinate is $k \neq 0$, the span of $\Psi(\gamma_j)$ is the whole space $\mathbb{R}^n$.

Next, consider the cycles $\gamma_j$ where $j \geq g-k+2$.  If none of the divisors $\ddiv \psi_i$ contain the point $\langle j-(g+2) \rangle_j$ in their support, then there are at least $n$ infinite rays attached to $\gamma_j$ that are all based at distinct points.  One of these infinite rays is a translate of the vector $u_1$, and the remaining ones are translates of $e_0 , \ldots , e_{n-1}$.  Since any $n$ of these vectors is linearly independent, the cycle spans the whole space.

On the other hand, at most one of the divisors $\ddiv \psi_i$ may contain the point $\langle j-(g+2) \rangle_j$ in its support.  In this case, there are at least $n-2$ infinite rays attached to $\gamma_j$ that are all based at distinct points, and one tree based at $\langle j-(g+2) \rangle_j$ with two leaves.  The root of the tree is a translate of the vector $e_y + u_1$ for some $y$.  The remaining rays are translates of $e_0 , \ldots , \widehat{e_y}, \ldots , e_{n-1}$.  Since these vectors are linearly independent, the span of $\Psi(\gamma_j)$ has codimension at most one.  In the case where it has codimension exactly one, we have $p_j - p_{j-1} = e_x -u_1$ for some $x$, and the span does not contains a translate of the vector $e_x$, or of the vector $e_x + u_1$.

To see that any two consecutive cycles span the whole space, consider the span of $\Psi(\gamma_{j-1})$.  By our assumption that there are no vertical steps, we have $p_{j-1}-p_{j-2} \neq e_x - u_1$.  Thus, the span of $\Psi(\gamma_{j-1})$ contains either a translate of $e_x$ or of $e_x + u_1$.  In either case, the span contains a vector that was not contained in the span of $\Psi(\gamma_j)$.  It follows that any two consecutive cycles are transverse.

The statement for cycles $\gamma_j$ for $j < k$ follow by a similar argument.  At most one of the divisors $\ddiv \psi_i$ may contain the point $\langle j \rangle_j$ in its support.  There are therefore at least $n-2$ infinite rays attached to $\gamma_j$ that are all based at distinct points, and if there are exactly $n-2$, then there is one tree based at $\langle j \rangle_j$ with two leaves.  The root of the tree is a translate of the vector $e_y + u_0$ for some $y$.  The remaining rays are translates of $e_0 , \ldots , \widehat{e_y}, \ldots , e_{n-1}$.  Since these vectors are linearly independent, we see that the span of $\Psi(\gamma_j)$ has codimension at most one.  In the case where it has codimension exactly one, we have $p_j - p_{j-1} = e_x -u_0$ for some $x$, and the span does not contains a translate of the vector $e_x$, or of the vector $e_x + u_0$.  The fact that any two consecutive cycles span the whole space follows by the same argument as above.
\end{proof}

\begin{remark}\label{rem: vertical-remark}
In the above proof, the condition that the tableau $t$ has no vertical steps is only used when $j < k$ and one of the divisors $\ddiv \psi_i$ contains the point $\langle j \rangle_j$ in its support, or when $j \geq g-k+2$ and one of the divisors $\ddiv \psi_i$ contains the point $\langle j-(g+2) \rangle_j$ in its support.  This hypothesis on the tableau can therefore be weakened without altering the proof of Proposition~\ref{Prop:SpecialSpan}.
\end{remark}

We now show that we may choose the edge lengths of $\Gamma$ so that the map $\Psi$ is naively well-spaced.  Moreover, this can be done in such a way that the dimension of the space of metric graphs with this combinatorial type and torsion profile is $2g-5+2k$. Assume that if $i \leq k-1$, then $n_i \gg n_{i+1}$.  This guarantees that, if $\gamma_i$ is contained in a hyperplane, then the closest point to $\gamma_i$ at which $\Gamma$ leaves the hyperplane is on $\gamma_{i+1}$.  In this case, there is a single tree based at $\gamma_i$ with two leaves, and $\Gamma$ also leaves the hyperplane at the branch point of this tree.  We are free to choose the distance from this branch point to $\gamma_i$. Define it to be the same distance from $\gamma_i$ as the closest point to $\gamma_i$ at which $\Gamma$ leaves this hyperplane.  In complementary fashion, if $i \geq g-k+2$, we assume that $n_i \gg n_{i-1}$, and define edge lengths in the analogous way.

\begin{proposition}
\label{Prop:SpecialLift}
Let $t$ be a scrollar tableau with no vertical steps, let $D \in \mathbf{T}(t)$ be a sufficiently general divisor class, and $\Psi$ the corresponding map.  Then there exists a curve $C$ of genus $g$ and gonality $k$ over a valued field $K$ and a divisor $\cD \in W^r_d (C)$, specializing to $D$ on $\Gamma$.
\end{proposition}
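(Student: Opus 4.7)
The plan is to apply Theorem~\ref{liftingtheorem} to the tropical map $\Psi$ constructed at the start of this section. Proposition~\ref{Prop:SpecialSpan} supplies parts~(B) and~(C) of Assumption~\ref{assumption}: each cycle of $\Gamma$ spans a subspace of codimension at most one under $\Psi$, and consecutive cycles span the full target $\RR^n$. The edge-length assumptions in the discussion preceding the proposition---namely $n_i \gg n_{i+1}$ for $i \leq k-1$, $n_i \gg n_{i-1}$ for $i \geq g-k+2$, and the placement of the branch points of the trees attached to the side cycles at the same distance from $\gamma_i$ as the nearest exit of $\Gamma$ from the offending hyperplane---are designed precisely so that for every hyperplane $H$ containing a contracted cycle, the minimum distance from that cycle to $\partial \Psi^{-1}(H)$ is attained at least twice. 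Thus $\Psi$ is naively well-spaced, and Theorem~\ref{liftingtheorem}, together with the birational invariance in Lemma~\ref{lem: birational-invariance}, yields a smooth genus $g$ curve $C$ over a non-archimedean extension $K$ of $\CC$ and a map $F : C \to \mathbb{S}(a,b)$ tropicalizing to $\Psi$.

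Next we set $E := (\pi \circ F)^\star \cO_{\PP^1}(1)$, where $\pi : \mathbb{S}(a,b) \to \PP^1$ is the scroll projection, so tropically $E$ coincides with the pencil $E$ chosen on $\Gamma$. The composition $\pi \circ F$ exhibits $C$ as a degree $k$ cover of $\PP^1$, and since the gonality of $\Gamma$ is $k$, Baker's specialization lemma forces the gonality of $C$ to equal $k$. By Lemma~\ref{Lem:Independent} the piecewise linear functions $\{\varphi_0 + \psi_i\}_{i < b}\cup\{\varphi_1 + \psi_i\}_{i < b}\cup\{\psi_j\}_{j \geq b}$ are tropically independent, so their algebraic lifts under $F$ are linearly independent sections and the $\Sigma(a,b)$-collection attached to $F$ is nondegenerate. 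Proposition~\ref{Prop:MapToScroll} then produces a divisor class $\cD_0$ on $C$ of rank at least $n+b-1$ with $\cD_0 - E$ of rank at least $b-1$. We set
\[
\cD := \cD_0 + (m-1)\, E
\]
and iterate the basepoint-free pencil trick, exactly as in the inductive proof of Corollary~\ref{Cor:AllM}, to obtain $h^0(C,\cD) \geq mn + b$, that is $r(\cD) \geq mn + b - 1 = r$.

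It remains to identify $\trop(\cD)$ with $D$, after which Baker's specialization lemma will force $r(\cD) \leq r(D) = r$, giving $\cD \in W^r_d(C)$. By construction $\cD_0$ is the pullback of the tautological hyperplane class on $\mathbb{S}(a,b)$ under $F$, cut out by the $n$ sections whose tropicalizations comprise the coordinates of $\Psi$; these tropical functions carry the fixed base $D(-m+1)_{b-1} + E_1$ through the representatives $D(-m+1)_i$, so $\trop(\cD_0)$ is the vertex-avoiding divisor $D(-m+1) \in \mathbf{T}(t(-m+1))$, and the assumption that $D$ is sufficiently general is what rules out coincidental translates within the torus. Hence $\trop(\cD) = D(-m+1) + (m-1)E = D$, completing the argument. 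The substantive obstacles in the overall strategy are absorbed into Theorem~\ref{liftingtheorem} and the combinatorial analysis of Proposition~\ref{Prop:SpecialSpan}; the remaining delicacy here is precisely the pinning-down of $\trop(\cD_0)$ to $D(-m+1)$ rather than a generic neighbor in $\mathbf{T}(t(-m+1))$, which relies on the nondegeneracy of $F$ and the generic choice of $D$.
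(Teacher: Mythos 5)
Your overall route is the same as the paper's: verify Assumption~\ref{assumption} via Proposition~\ref{Prop:SpecialSpan}, invoke Theorem~\ref{liftingtheorem} to produce $F:C\to\mathbb{S}(a,b)$ with $F^{\trop}=\Psi$, use Lemma~\ref{Lem:Independent} to get nondegeneracy, and then boost the rank with the basepoint-free pencil trick. Your explicit identification of $\trop(\cD)$ with $D$ at the end is a welcome addition, since the statement does require that $\cD$ specialize to $D$.

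There is, however, a genuine gap in the pencil-trick step. You feed into Corollary~\ref{Cor:AllM} only the \emph{lower} bounds $r(\cD_0)\geq n+b-1$ and $r(\cD_0-E)\geq b-1$ coming from Proposition~\ref{Prop:MapToScroll}, but the induction in that corollary does not run on lower bounds alone. Its base case is the inequality $h^0(\cD_0)-h^0(\cD_0-E)\geq n$, and the pencil trick only gives $h^0(L+E)\geq 2h^0(L)-h^0(L-E)$, a bound that \emph{degrades} as $h^0(L-E)$ grows. If, say, $h^0(\cD_0-E)$ exceeded $b$ by more than the excess of $h^0(\cD_0)$ over $n+b$, the iteration would fail to produce $h^0(\cD)\geq mn+b$. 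So you must also establish the upper bound $r(\cD_0-E)\leq b-1$ (and, to match the hypotheses of Proposition~\ref{Prop:PartConverse}, $r(\cD_0-2E)=-1$). This is exactly what the paper supplies and what you omit: since $\trop(\cD_0-E)=D(-m)$ and $\trop(\cD_0-2E)=D(-m-1)$, Corollary~\ref{Cor:Ranks} computes these tropical ranks (that is where vertex avoidance of $D(-m)$ and the hypothesis that $D$ is sufficiently general are actually used), and Baker's specialization lemma transfers the upper bounds to $C$. Note the irony: the one application of Baker's lemma you do make at the end (to conclude $r(\cD)\leq r$) is not needed for $\cD\in W^r_d(C)$, which only requires $r(\cD)\geq r$, while the application you skip (to $\cD_0-E$) is the one the argument cannot do without.
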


\begin{proof}
By Proposition~\ref{Prop:SpecialSpan}, the map $\Psi$ satisfies Assumption~\ref{assumption}, and thus by Theorem~\ref{liftingtheorem} there exists a smooth curve $C$ of genus $g$ over a valued field $K$ and a map $F: C \to \mathbb{S}(a,b)$ such that the tropicalization of $F$ is $\Psi$.  In the case where $b>0$ map $F$ is given coordinatewise by
\[
F(p) = \left( s_0 (p), f_0(p), \ldots , f_{b-2} (p), f_b (p) , \ldots , f_{n-1} (p) \right) ,
\]
where $\trop (s_0) = \varphi_0$ and $\trop (f_i) = \psi_i$. The case where $b  = 0$ is similar, and we leave the details of this case to the reader.  By Lemma~\ref{Lem:Independent}, the functions
\[
\varphi_0 + \psi_0 , \ldots , \varphi_0 + \psi_{b-1} , \varphi_1 + \psi_0 , \ldots , \varphi_1 + \psi_{b-1} , \psi_b , \ldots , \psi_{n-1}
\]
are tropically independent. Thus, the functions
\[
s_0f_0 , \ldots , s_0f_{b-1} , s_1 f_0 , \ldots , s_1 f_{b-1} , f_b , \ldots , f_{n-1}
\]
are linearly independent, and the map $F$ is nondegenerate.  We let $m = \lfloor \frac{r+1}{n} \rfloor$ and $\cD = F^\star (\cO_{\mathbb{S}(a,b)} (1) \otimes \cO_{\PP^1} (m))$.

By Proposition~\ref{Prop:MapToScroll}, $r(\cD - (m-1)g^1_k) \geq n+b-1$ and $r(\cD - mg^1_k) \geq b-1$.  To obtain equality, we note that by Corollary~\ref{Cor:Ranks} and Baker's specialization lemma, we have
\begin{enumerate}
\item  $r(\mathcal{D} - mg^1_k) \leq r(D(-m)) = b-1$, and
\item  $r(\mathcal{D} - (m+1)g^1_k) \leq r(D(-m-1)) = -1$.
\end{enumerate}
The result follows from Corollary~\ref{Cor:AllM}.
\end{proof}

We conclude the proof of Theorem~\ref{thm:mainthm} in analogous fashion to Theorem~\ref{thm: Kempf-Kleiman-Laksov}. By the result above, its proof is nearly identical to that of Theorem~\ref{thm: Kempf-Kleiman-Laksov}.

\begin{theorem}
Let $C$ be a general curve of genus $g$ and gonality $k$.  Then
\[
\dim W^r_d (C) = {\rho}_k (g,r,d).
\]
\end{theorem}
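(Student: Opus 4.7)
The plan is to mimic the proof of Theorem~\ref{thm: Kempf-Kleiman-Laksov} almost verbatim, replacing the moduli space $\cM_g$ with the moduli space $\cM_{g,k}$ of $k$-gonal curves of genus $g$, and replacing the appeal to Theorem~\ref{Thm:GenericLift} by Proposition~\ref{Prop:SpecialLift}. The upper bound $\dim W^r_d(C) \leq \overline{\rho}_k(g,r,d)$ is Pflueger's theorem, obtained via Baker's specialization lemma applied to the $k$-gonal chain of cycles $\Gamma$ of Section~\ref{Sec:ScrollarTableaux}, so I will concentrate on the lower bound.

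First I would invoke Proposition~\ref{Prop:Remainder} to choose a scrollar tableau $t$ of type $(a,b)$ with $r+1$ columns and $g-d+r$ rows satisfying $\dim \mathbf{T}(t) = \overline{\rho}_k(g,r,d)$. When $n = a+b > 1$, such a tableau can be chosen without vertical steps; the case $n=1$ corresponds to the scroll $\mathbb{S}(1,0) = \PP^1$ and is handled by the variant argument indicated at the start of Section~\ref{Sec:SpecialLift}. Then I would apply Proposition~\ref{Prop:SpecialLift} to conclude that, for a sufficiently general $D \in \mathbf{T}(t)$, there is a smooth $k$-gonal curve $C$ over a valued field together with a divisor $\mathcal D \in W^r_d(C)$ specializing to $D$ on $\Gamma$.

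Next I would run the tropicalization-dimension argument of Theorem~\ref{thm: Kempf-Kleiman-Laksov}, but relative to $\cM_{g,k}$ rather than $\cM_g$. The $k$-gonal chain of cycles $\Gamma$, with edge lengths varying subject only to the torsion profile of Section~\ref{Sec:ScrollarTableaux}, sweeps out an open subset of the tropical moduli space of $k$-gonal curves; this tropical locus has dimension $2g-5+2k$, which equals $\dim \cM_{g,k}$. Working with the universal symmetric power over a logarithmically smooth compactification of $\cM_{g,k}$ (for instance the moduli of admissible covers) and the universal Brill--Noether locus $\widetilde{\cW}^r_d \subset \Sym^d(\mathscr C)$, the lifts produced above combined with Ulirsch's tropical dimension inequality \cite[Theorem 1.1]{U15} should yield
\[
\dim \widetilde{\cW}^r_d \;\geq\; (2g-5+2k) + \overline{\rho}_k(g,r,d) + r.
\]

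Finally I would use Corollary~\ref{Cor:Ranks} together with Baker's specialization lemma (as in the proof of Proposition~\ref{Prop:SpecialLift}) to conclude that the lift $\mathcal D$ has rank exactly $r$, so that by semicontinuity the forgetful map $\widetilde{\cW}^r_d \to \cW^r_d$ has relative dimension $r$ over an open subset of the target. Subtracting $r$ and projecting to $\cM_{g,k}$ then produces a component of $\cW^r_d$ which dominates $\cM_{g,k}$ and whose general fibers have dimension at least $\overline{\rho}_k(g,r,d)$, completing the proof when combined with the upper bound. The hard part is essentially bookkeeping: setting up the appropriate logarithmically smooth compactification of the relative symmetric power over $\cM_{g,k}$ and verifying compatibility of tropicalization with the Ulirsch dimension bound in the $k$-gonal setting; no new lifting or deformation-theoretic input beyond Proposition~\ref{Prop:SpecialLift} is required.
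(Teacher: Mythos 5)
Your proposal matches the paper's own proof essentially step for step: the same universal objects $\cW^r_d$ and $\widetilde{\cW}^r_d$ over the $k$-gonal locus, the same $2g-5+2k$ tropical dimension count, the same appeal to Proposition~\ref{Prop:SpecialLift} in place of Theorem~\ref{Thm:GenericLift}, and the same descent from $\widetilde{\cW}^r_d$ to $\cW^r_d$. The only point of divergence is the exact-rank step: the paper does not use Baker's lemma there (a general $D\in\mathbf{T}(t)$ for scrollar $t$ need not be vertex avoiding, so Corollary~\ref{Cor:Ranks} does not directly give $r_\Gamma(D)=r$ as Lemma~\ref{Lem:ExactRank} did in the generic case), but instead argues that a general member of $\widetilde{\cW}^r_d$ has rank exactly $r$ by iterating the observation that otherwise $D-p-q$ would still have rank at least $r$ for general $p,q$, forcing every degree-$d$ class to have rank at least $r$; you should substitute that argument (or otherwise justify $r_\Gamma(D)=r$) at that point.
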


\begin{proof}
By \cite{Pfl16b}, we have $\dim W^r_d (C) \leq {\rho}_k (g,r,d)$, so it suffices to prove the reverse inequality. Let $\cM_g^k$ be the moduli space of curves of genus $g$ that admit a degree $k$ map to $\PP^1$.  A general member in this space is a curve of gonality $k$.  Let $\mathscr C_k$ be the universal curve, and let $\cW^r_d$ be the universal $W^r_d$ over $\cM_g^k$. As in the proof of Theorem~\ref{thm: Kempf-Kleiman-Laksov}, let $\widetilde \cW^r_d$ be locus in the symmetric $d^{\mathrm{th}}$ fibered power of $\mathscr C_k$ parametrizing divisors of degree $d$ and rank at least $r$. From the results of~\cite{CMR14a}, the tropicalization of $\cM_g^k$ coincides with the image of the tropical Hurwitz space of simply ramified degree $k$ covers of $\PP^1$ in $\Mbar_g^{\trop}$. As previously discussed, in the chain of cycles combinatorial type, this is seen to have dimension $2g-5+2k$.

We work in the Berkovich analytic domain of $k$-gonal curves whose skeleton is the chain of cycles with torsion profile as specified in Section~\ref{Sec:ScrollarTableaux}.  In the notation of Theorem~\ref{thm: Kempf-Kleiman-Laksov}, we have a continuous tropicalization map
\[
\trop: [\mathrm{Sym}^{d,\an}(\overline{\mathscr C},\Gamma)]\to \mathrm{Sym}^{d,\trop}(\Gamma).
\]

We can apply Proposition~\ref{Prop:SpecialLift} to conclude that the tropicalization of $\widetilde \cW^{r,\an}_d$  has dimension $2g-5+2k+{\rho}_k (g,r,d)+r$.  To see that $\cW^r_d$ has a component of dimension at least $2g-5+2k+\overline{\rho}_k (g,r,d)+r$, it suffices to show that, for a general pair $(C,D) \in \widetilde \cW^r_d$, the divisor $D$ has rank exactly $r$.  To see this, note that if $D$ has rank strictly greater than $r$, then for general points $p,q \in C$, $D+p-q$ has rank at least $r$.  If every divisor of rank at least $r$ has rank strictly greater than $r$, then by iterating this procedure we see that every divisor in $\Pic^d (C)$ has rank at least $r$, but this is impossible.  Therefore, $\cW^r_d$ has a component of dimension at least $2g-5+2k+{\rho}_k (g,r,d)$. By another application of Proposition~\ref{Prop:SpecialLift}, the image of this component in $\cM^{k,\text{trop}}_g$ has dimension $2g-5+2k$.  It follows that this component dominates $\cM^k_g$, and the fibers have dimension ${\rho}_k (g,r,d)$.
\end{proof}

\bibliographystyle{siam}
\bibliography{kGonalCurves}

\end{document}